\newcommand{\sat}{{\rm sat}}
\newcommand{\cts}{\textnormal{top}}
\newcommand{\hol}{{\mathord{\mathrm{hol}}}}
\newcommand{\antihol}{\overline{\textrm{hol}}}
\newcommand{\alg}{{\mathord{\mathrm{alg}}}}
\title{Homology of spaces of curves on blowups}
\author{Ronno Das}
\address{Department of Mathematics, Stockholm University}
\email{\href{mailto:ronnodas@gmail.com}{ronnodas@gmail.com}}
\author{Philip Tosteson}
\address{Department of Mathematics, University of North Carolina}
\email{\href{mailto:ptoste@unc.edu}{ptoste@unc.edu}}
\begin{document}

\begin{abstract}
We consider the space of holomorphic maps from a compact Riemann surface to a projective space blown up at finitely many points.  We show that the homology of this mapping space equals that of the space of continuous maps that intersect the exceptional divisors positively, once the degree of the maps is sufficiently positive compared to the degree of homology. The proof uses a version of Vassiliev's method of simplicial resolution.  As a consequence, we obtain a homological stability result for rational curves on the degree $5$ del Pezzo surface, which is analogous to a case of the Batyrev--Manin conjectures on rational point counts.  
\end{abstract}

 \maketitle

	\tableofcontents

\section{Introduction}

Let $X$ be a projective algebraic variety and $C$ be a projective algebraic curve over $\bbC$. What is the homology of the space of algebraic (or equivalently holomorphic) maps $\Alg(C,X)$ from $C$ to $X$?  

Segal \cite{segal1979topology} discovered a remarkable phenomenon:  when $X = \bbP^n$,  the homology of the space of degree $d$ algebraic maps approximates the homology of the space of degree $d$ continuous maps as $d \to \infty$.   More precisely,  for all $i \in \bbN$ and $d \gg 0$ the map $$H_i(\Alg_d(C,X)) \to H_i(\Top_d(C,X)),$$ induced by inclusion is an isomorphism.  Here $\Alg_d$ (resp.\ $\Top_d$) denotes the clopen subset consisting of degree $d$ algebraic (resp.\ continuous) maps.   Since Segal's work this result has been extended to rational curves on  toric varieties,  Grassmannians, flag varieties, and more generally varieties with a dense solvable group action \cite{kirwan1985spaces, guest1995topology, boyer1999stability, boyer1994topology, Gravesen}.   Motivated by these results, Cohen--Jones--Segal \cite{cohen-jones-segal} made a conjecture about this phenomenon holding more generally.

In this paper, we consider the case where $X$ is the blowup of a projective space $\bbP(V)$ at a finite number of points $p_1, \dots, p_r$. (Throughout, we take $\dim V \geq 3$ since otherwise the blowup is trivial).   This case is notable for two reasons.   First, when $r$ is large and the points are in general position, $X$ is not a homogeneous variety (its automorphism group is discrete), whereas all previous instances concerned homogeneous targets.  Second, it includes all $2$-dimensional Fano varieties (del Pezzo surfaces).  

Classes in $H_2(X)$ are parameterized by tuples $(d, n_1, \dots, n_r) \in \bbZ^{r+1}$, where $n_i$ records the intersection number with the exceptional divisor $E_i$ and $d$ records the intersection number with the hyperplane class $H$.  We have decompositions $$ \Alg(C,X) =\bigsqcup_{(d,n) \in \bbZ^{r+1}} \Alg_{d,n}(C,X), \qquad \Top(C,X) =\bigsqcup_{(d,n) \in \bbZ^{r+1}} \Top_{d,n}(C,X), $$  where the summand associated to $(d,n)$ consists of maps $f: C \to X$ such that $f_*[C] = (d,n)$.  By composing with the blowdown map, the space $\Alg_{d,n}(C,X)$ of algebraic maps of multi-degree $(d,n)$ can be identified with the space of degree $d$ algebraic maps $C \to \bbP(V)$ that intersect $p_i$ with multiplicity $n_i$. 

Our most general result relates  $\Alg_{d,n}(C,X)$ to a variant of the continuous mapping space consisting of maps with that intersect $E_i$ \textbf{positively}.  Let $\Top^+(C,X) \subseteq \Top(C,X)$ be the subspace of continuous maps $f$ such that  
\begin{itemize}
\item $f\inv(E_i)$ is discrete and
\item $f$ has positive local intersection multiplicity (see \cref{intersection-multiplicity}) with $E_i$ at every point of $f\inv(E_i)$.
\end{itemize}
As above we have a decomposition $\Top^+_{d,n}(C) = \bigsqcup_{d,n} \Top_{d,n}^+(C,X).$ 
We have the following theorem, which we state for both pointed and unpointed mapping spaces.  When $X$ and $C$ have a distinguished base point, we will write $\Top_*(C,X)$, (resp.\ $\Alg_*(C,X)$ etc.), for the space of pointed continuous (resp.\ algebraic etc.) maps.

\begin{thm}\label{thm:blowupThm}
Let $X = \Bl_{p_1, \dots, p_r} \bbP(V)$ for any $r \ge 1$.
Let $\alpha = (d,n_1, \dots, n_r) \in \bbZ_{\ge 0}^{r+1}$ and set $M_{\alpha}:= d-\sum_{i = 1}^r n_i$. If $M_\alpha > 0$, then the maps $$H_i(\Alg_{k\alpha}(C,X)) \to H_i(\Top_{k\alpha}^+(C,X)), \quad H_i(\Alg_{k\alpha,*}(C,X)) \to H_i(\Top_{k\alpha,*}^+(C,X))$$ are isomorphisms for $i < M_\alpha k - 2g - 2$.  
 
If the points $\{p_j ~|~ j \leq \dim V\}$ are in linearly general position 
then the same statement holds, but with $M_{\alpha}$ replaced by the improved constant $\tilde M_{\alpha} := d -\sum_{i =1}^r n_i +\max_{j \leq \dim V} n_j$.  (Here we define $n_{j} = 0$ if $r < j \leq \dim V$.)
\end{thm}

It is natural to ask to whether the above theorem holds when $\Top^+$ is replaced by $\Top$.  In general the answer is no: the homology of the space of positive maps differs from that of all continuous maps.  For instance, when $n = 0$ we have that  $\Top^+_{d,0}(C, X)$ can be identified with the space of continuous maps $C \to \bbP(V) - \{p_1, \dots, p_r\}$ that have degree $d$ as a map to $\bbP(V)$. However, we suspect that for $n \gg 0$ the homology of the space of positive maps approximates the homology of the space of continuous maps, see \cref{posmapsquestion}. 
    
In the pointed case with $C = \bbP^1$ we may pass from $\Top^+$ to $\Top$ provided that each $n_i > 0$.
\begin{cor}\label{non-positivecorollary}
   Let $X = \Bl_{p_1, \dots, p_r} \bbP(V)$ as in \cref{thm:blowupThm}, but with $\{p_j ~|~ j \leq \dim V\}$ in linearly general position.    Let $\alpha = (d,n_1, \dots, n_r)$  with $n_i > 0$ for all $i = 1, \dots r$.  Further suppose that $d> -n_j + \sum_{i = 1}^r n_i$ for all $j \leq \dim V$ (again we set $n_j = 0$ for $r < j \leq \dim V$).  Then the map \[H_i(\Alg_{k\alpha,*}(\bbP^1,X)) \to H_i(\Top_{k\alpha,*}(\bbP^1,X))\] is an isomorphism for $k \gg 0$ (more precisely for $i < \tilde M_{\alpha} k - 2g - 2$, where $\tilde M_\alpha$ is defined as in \cref{thm:blowupThm}).
\end{cor}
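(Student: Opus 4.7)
The plan is to use Theorem~\ref{thm:blowupThm} as a black box, reducing the statement to showing that the inclusion $\Top^+_{k\alpha,*}(\bbP^1,X) \hookrightarrow \Top_{k\alpha,*}(\bbP^1,X)$ induces an isomorphism on $H_i$ for $k \gg 0$. The target is easy to pin down: since $\dim V \geq 3$, the variety $X$ is simply connected, so $\Top_{k\alpha,*}(\bbP^1,X)$ is the component of $\Omega^2 X$ indexed by $k\alpha \in \pi_2(X) = H_2(X,\bbZ)$. Because loop sum with a fixed class is a self-homotopy-equivalence of $\Omega^2 X$, all components are homotopy equivalent, so $\Top_{k\alpha,*}(\bbP^1,X) \simeq \Omega^2_0 X$ with homotopy type independent of $k$.

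Next I would set up a compatible stabilization on $\Top^+$. Fix $k_0 \geq 1$ such that $\Alg_{k_0\alpha,*}(\bbP^1,X)$ is nonempty (such $k_0$ exists by the main theorem, since $\Top^+_{k_0\alpha,*}$ is nonempty under our hypotheses for $k_0$ large enough), and pick a based positive algebraic map $\phi$ of class $k_0\alpha$. The pinch map $\bbP^1 \to \bbP^1 \vee \bbP^1$ yields a stabilization
\[
\sigma: \Top^+_{k\alpha,*}(\bbP^1,X) \to \Top^+_{(k+k_0)\alpha,*}(\bbP^1,X), \qquad f \mapsto f \vee \phi,
\]
which commutes with the inclusions into $\Top_{k\alpha,*}$; the analogous map on the $\Top_{k\alpha,*}$ side corresponds to loop sum with $[\phi] \in \pi_2(X)$ and is therefore a homotopy equivalence.

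The main step, and the principal obstacle, is proving homological stability of $\Top^+_{k\alpha,*}(\bbP^1,X)$ with stable limit $\Omega^2_0 X$. Following the template of Segal, McDuff, and Boyer--Hurtubise for holomorphic maps, I would construct a scanning map $\Top^+_{k\alpha,*}(\bbP^1,X) \to \Omega^2 X$ by scanning over small disks on $\bbP^1$, and show it is a homology equivalence in a range $i \leq R(k)$ with $R(k) \to \infty$. The assumption $n_i > 0$ for every $i$ is crucial: it ensures the positive intersection data with each $E_i$ forms a non-empty, growing configuration on $\bbP^1$, supplying the accumulating data that drives the scanning argument. An alternative attack, closer in spirit to the paper's methods, would be a Vassiliev-type stratification of $\Top_{k\alpha,*} \setminus \Top^+_{k\alpha,*}$ by the ``defect'' of positivity (the number of cancelling or non-discrete intersections with some $E_i$), together with a check that these strata have codimension tending to infinity as $k \to \infty$ when all $n_i > 0$. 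Once either strategy goes through, combining with $\Top_{k\alpha,*}\simeq\Omega^2_0 X$ closes the corollary.
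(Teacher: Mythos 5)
Your reduction is the right one --- applying \cref{thm:blowupThm}, everything hinges on showing that the inclusion $\Top^+_{k\alpha,*}(\bbP^1,X) \hookrightarrow \Top_{k\alpha,*}(\bbP^1,X)$ is a homology isomorphism in a growing range, and your observation that the target is a component of $\Omega^2 X$ with homotopy type independent of $k$ is correct. But the main step is left as a sketch, and the two strategies you propose (a scanning argument for $\Top^+$, or a Vassiliev stratification of $\Top \setminus \Top^+$ by the ``defect of positivity'') are, as far as I can tell, not in the literature and would each constitute a new theorem. In fact, the paper explicitly singles out the question of whether $\Top^+_n \to \Top_n$ is a homology isomorphism for $n \gg 0$ as an open problem (\cref{posmapsquestion}); so the authors themselves do not claim to know how to run a direct scanning or stratification argument on the positive mapping space, and your appeal to the Segal--McDuff--Boyer--Hurtubise ``template'' is only a heuristic, not a proof. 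Your hypotheses $n_i > 0$ and source $\bbP^1$ are exactly the ones the paper's proof needs, but your sketch does not explain where $\bbP^1$ is actually used (scanning arguments of the kind you cite generalize beyond genus $0$), which is a sign the argument is missing an ingredient.

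The paper's route is quite different and deliberately avoids saying anything new about $\Top^+ \to \Top$ in general. It exploits two things you did not use. First, $\Top^+_{(e,l_1,\dots,l_r),*}(\bbP^1,X)$ (and likewise $\Top$) is, up to homeomorphism, independent of where the blown-up points $p_i$ sit. So one may \emph{deform all the $p_i$ onto a single hyperplane}, in which case $X$ acquires a dense action of a solvable group, and the result of Boyer--Hurtubise--Milgram \cite{boyer1999stability} applies directly to give that $\Alg_{(e,l),*}(\bbP^1,X) \to \Top_{(e,l),*}(\bbP^1,X)$ is a homology isomorphism in a range $i \leq \min(e - \sum l_j,\ \min_j l_j)$. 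Second, the wedge-with-$\eta$ stabilization increasing $e$ is a homotopy equivalence on both $\Top^+$ and $\Top$ (the inverse is wedging with the reversed homotopy $\overline\eta$, and $\eta$ lands in $X - \bigcup_i E_i$ so it preserves positivity). Chasing the resulting commutative square, together with \cref{thm:blowupThm} applied at the degenerate configuration of points with large $e$, yields that $\Top^+_{(e,l),*} \to \Top_{(e,l),*}$ is a homology isomorphism for $i \leq \min_j l_j$, which transfers back to the original points by the invariance above. This is why the proof is specific to $\bbP^1$: the cited input \cite{boyer1999stability} is about pointed maps from $\bbP^1$ to solv-varieties. If you can make a scanning (or stratification) argument for $\Top^+$ actually go through, it would be a genuine improvement over the paper --- it would resolve \cref{posmapsquestion} and extend the corollary to higher genus and unpointed maps --- but at present that step is a gap, not a proof.
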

Note that $\Top_{k \alpha,*}(\bbP^1, X)$ is independent of $\alpha$ (each component of the double loop space of $X$ is isomorphic), so \cref{non-positivecorollary} implies a homological stability statement for the space of algebraic maps.
 
\Cref{non-positivecorollary} is obtained from \cref{thm:blowupThm} by deforming the points to lie on a line, and applying previous results on mapping spaces for varieties admitting a dense solvable group action.   Surprisingly, despite this proof method, the corollary does not hold for points which are not in linearly general position (unless we strengthen the bound to $d > \sum_{i} n_i$ as in \cref{thm:blowupThm}).  For instance, if $\dim V = 3$ and  $p_1, p_2, p_3$ lie on a line and $(d,n)=(5,2,2,2)$ then by Bezout's theorem any degree $5k$ algebraic map $C \to \bbP(V)$ with $\mult(f\inv(p_i)) = 2k$  must lie on the line.  It follows that $\Alg_{*, k \alpha}(\bbP^1,X) = \emptyset$, while the topological mapping space is nonempty.

\subsection{Del Pezzo surfaces and Batyrev--Manin conjectures}

Work by Ellenberg--Venkatesh--Westerland \cite{venkatesh2010statistics,  ellenberg2016homological} has drawn attention to the relationship between the homological stability phenomena sequences of algebraic varieties, and the asymptotic behavior of associated arithmetic statistics.   In our case, if $C$ and $X$ were defined over a finite field $\bbF_q$ then the set of $\bbF_q$-valued points of $\Alg_{d,n}(C,X)$ would correspond to the set of $K$-valued points of $X$ satisfying certain height conditions,  where $K=\bbF_q(C)$ is the function field of $C$.  The large-height asymptotics of the number of $K$-valued points on a Fano variety $X/K$ (for $K$ a global field) is the subject of conjectures due to Batyrev--Manin \cite{BatyrevManin}.  

   In dimension $2$, these conjectures concern del Pezzo surfaces. Over $\bbC$, a del Pezzo  surface $X$ of degree $\deg(X)$ is isomorphic to the blowup of $\bbP^2$ at $9-\deg(X)$ general points (or $\deg X = 8$ and $X = \bbP^1 \times \bbP^1$). In this case, the Batyrev--Manin conjectures suggest that there is a subset of the ample cone  $U \subseteq A  \subseteq H_2(\bbR)$ that is arithmetically dense in the sense that $$\lim_{R \to \infty} \frac{\#(U \cap H_2(X, \bbZ) \cap B(0, R))}{ \#(A \cap  H_2(X, \bbZ) \cap B(0, R))} = 1$$ and such that for every class $\alpha \in U \cap H_2(X, \bbZ)$ the homology of $\Alg_{k \alpha}(C,X)$ approximates the the homology of $\Top_{k\alpha}(C,X)$ for $k \gg 0$.    
   
   \begin{remark}
        The reader may wonder why the \emph{a priori} transcendental $\Top(C, X)$ should appear in an analog of the arithmetic Batyrev--Manin conjectures. A full justification is outside the scope of this paper, but as motivation we note that (1) $\Top(C,X)$ appears as the limiting object in previous homological stability results for algebraic mapping spaces, and (2) its rational homology admits purely algebraic models \cite{haefliger}.
   \end{remark} 
   
   In this direction, we have the following result for $\deg(X) = 5$:

\begin{thm}\label{thm:delPezzo}
	Let $X$ be the degree $5$ del Pezzo surface, and let $\alpha = (d, n_1, n_2, n_3, n_4)$ be an ample class satisfying the condition that $\min_{i = 1}^4(n_i)$ is distinct from the other values of $n_i$. Then the map $$H_i(\Alg_{k \alpha,*}(\bbP^1, X)) \to H_i(\Top_{k \alpha,*}(\bbP^1, X))$$ is an isomorphism for all  $k \gg 0$  (more precisely for $i < N_\alpha k -2g -2$ with $N_{\alpha} := \max_{\sigma \in S_5} \tilde M_{\sigma \alpha}$, where the symmetric group $S_5$ acts on $\alpha$ via its usual action on $X$ by Cremona transformations and  $\tilde M_{\alpha}$ is as defined in \cref{thm:blowupThm}). 
\end{thm}

We view \cref{thm:delPezzo} as a homological analog of the Batyrev--Manin conjectures for the degree $5$ del Pezzo surface, proved by de La Bret\'eche \cite{de2002nombre} in the case of a split surface over $\bbQ$.  Our methods also work for higher degree del Pezzo surfaces (the toric cases). 

For lower degree del Pezzo surfaces, \cref{non-positivecorollary} still applies to prove that for certain classes $\alpha$ the map $$H_i(\Alg_{k \alpha,*}(\bbP^1, X)) \to H_i(\Top_{k \alpha,*}(\bbP^1, X))$$ is an isomorphism.  However, because the ample cone of a lower degree del Pezzo surface is larger, the classes for which \cref{non-positivecorollary} applies are not dense in the ample cone.

	\subsection{Proof strategy} 
            Our approach to establishing \cref{thm:blowupThm} is through a version of Vassiliev's method for computing the cohomology of discriminant complements, which is a topological version of an inclusion-exclusion argument.  As mentioned above,  $\Alg_{d,n}(C,X)$ is identified with the space of degree $d$ maps to $C \to \bbP(V)$ passing through $p_i$ with multiplicity $n_i$. Thus there is a principle  $\bbC^*$-bundle  $\tilde \Alg_{d,n}(C,X) \to \Alg_{d,n}(C,X)$  that parametrizes the data of a line bundle $L \in \Pic^d(C)$ and a nonvanishing algebraic section $s \in \Gamma(L \otimes V)$ that intersects $L \otimes \ell_i$ with multiplicity exactly $n_i$.  
            
            Let $W_n \subseteq \prod_{i = 1}^{r} \Sym^{n_i} C$ be the space of pairwise disjoint divisors.  Then there is a map $\tilde \Alg_{d,n}(C,X) \to W_n \times \Pic^d(C),$ taking a section $s \in \Gamma(V \otimes L)$ to the pair $(s\inv(\ell_i))_{i = 1}^r$,~$L$.  In fact, $\tilde \Alg_{d,n}(C,X)$ is an open subset of the linear space over $E_{d,n} \to W_n \times \Pic^d(C)$ whose fiber over $(U_i)_{i = 1}^r, L$ consists of sections $s \in \Gamma(L \otimes V)$ such that $s(U_i) \subseteq \ell_i$.  When $d$ is large enough relative to $n$, these fibers have constant rank and $E_{d,n}$ is a vector bundle.  The complement of $\tilde \Alg_{d,n}(C,X)$ is a closed subset of $\Delta_{d,n} \subseteq E_{d,n}$ consisting of sections that vanish or intersect some $\ell_i$ to greater order.

            We stratify $E_{d,n}$, using a poset $\Hilb(C)^{Q_r}$ whose elements are collections of divisors $D_{\ell_1}, \dots, D_{\ell_r}, D_0 \in \Hilb(C)$, with $D_0 \subseteq D_{\ell_i}$ for all $i$.  Given such an element, we can consider the space of sections $s \in \Gamma(L \otimes V)$ satisfying the incidence conditions $s(D_i) \subseteq \ell_i$ and $s(D_0) \subseteq 0$.  As $D_{\ell_i}, D_0$ vary, the subspaces of sections satisfying these incidence conditions form a vector bundle over an appropriate configuration space (at least if $d \gg 0$).

            We use this stratification to produce a bar complex that resolves the ``discriminant locus'' $\Delta_{d,n}$ in the sense that it comes with a map to $\Delta_{d,n}$ that induces isomorphisms on homology in a range of degrees (governed again by how large $d$ is compared to $n$).  This resolution produces a spectral sequence that converges to the compactly supported cohomology of $\Delta_{d,n}$ and whose terms are the compactly supported cohomology of vector bundles over configuration space.

            We then perform an analogous construction for a `semi-topological' version of $\Alg_{d,n}(C,X)$. Specifically, we introduce a moduli space $\cM_{d,n}$, which is intermediate between the space of holomorphic and continuous maps.  The points of $\cM_{d,n}$ parameterize isomorphism classes of the following data: 
            \begin{itemize}
                \item a degree $d$ holomorphic line bundle $L$ on $C$
                \item a collection of disjoint divisors $D_i \in \Sym^{n_i}(C), i = 1, \dots, r$
                \item a \textbf{continuous} section $s \in \Gamma_{\rm top}(C, L \otimes V)$
            \end{itemize}
            such that if $p \in C$ is a multiplicity $k$ point of $D_i$, the section $s$ intersects $\ell_i$ holomorphically to order exactly $k$ in the following sense:
            \begin{itemize}
                \item the section $\overline s \in \Gamma(L \otimes V/\ell_i)$ takes the form $s = az^k + o(|z|^k)$, where $z$ is a holomorphic local coordinate of $L$ at $p$  and $a \in V/\ell_i - 0$.
            \end{itemize}
            Taking a similar approach to the homology of $\cM_{d,n}$, we show that $\Alg_{d,n}(C,X) \to \cM_{d,n}$ induces an isomorphism on homology in a range of degrees (depending on $d,n$).  Then we show that $\cM_{d,n}$ is weakly homotopy equivalent to $\Top^+_{d,n}(C,X)$, in order to establish Theorem \ref{thm:blowupThm}.

            \subsection{Summary of paper}
            In \cref{sec:review} we review Gysin maps, stratifications and bar constructions.  In \cref{sec:Hilb}, we introduce the poset $\Hilb(C)^Q$ of divisors that we use to resolve the complement of $\tilde \Alg_{d,n}(C,X)$.   In \cref{combinatorialtypes}, we introduce a stratification of $\Hilb(C)^Q$.  In \cref{stratificationConvergence}, we use this stratification to compute the homology of bar constructions over $\Hilb(C)^Q$.  We also prove an important criterion for the homology of a simplicial resolution to agree with the homology of the discriminant locus, \cref{BarApproximation}.  In \cref{sec:SemiTop}, we describe how to topologize $\cM_{d,n}$ and similar function spaces.  We also construct finite dimensional approximations to these function spaces, in order to be able to apply compactly supported cohomology.  In \cref{sec:Semitopcompare}, we use these finite dimensional approximations and \cref{BarApproximation} to establish \cref{semitopcompare}: a criterion for the map $\Alg_{d,n}(C,X) \to \cM_{d,n}$ to induce an isomorphism on a range of homology groups.  In \cref{sec:posmapscompare}, we show that semi-topological model is weakly homotopy equivalent to the space of positive maps.   In \cref{sec:unobstructedness}, we show that certain spaces of sections are unobstructed in order to verify the hypotheses of \cref{semitopcompare} and deduce our main results.

	\subsection{Relation to other work}

        All previous results we know, showing that the integral homology of  $\Alg_*(\bbP^1,X)$ stabilizes to  $\Top_*(\bbP^1,X)$ concern varieties $X$ that are \emph{homogeneous} in the sense that they admit an action of a solvable algebraic group $N$ with a dense orbit.  In these cases, there is a description of the algebraic mapping space in terms of certain labelled configuration spaces of zeroes and poles.  In our case, when $r \geq \dim V +2$ there is no analogous description. (Configuration spaces play an important role in our argument too, but they appear in a different way). 
        On the other hand, work of Browning--Sawin \cite{BrowningSawin} on a geometric version of the circle method yields a way to compare the rational homology of a space of pointed algebraic maps to low-degree affine hypersurfaces (which are not homogeneous) to the homology double loop spaces.

        There have been a number of applications of variants of the Vassiliev method to spaces of algebraic/holomorphic maps to homogeneous targets (including cases where the source variety is higher dimensional) by Mostovoy \cite{mostovoy2012truncated}, Koszlowski--Yamaguchi \cite{kozlowskiyamaguchiToric} and Banerjee \cite{banerjee2022moduli}.
        
        Aumonier has used the Vassiliev method to establish 
        a general h-principle that relates holomorphic sections of a vector bundle satisfying incidence conditions to continuous sections of a jet bundle (satisfying the same incidence conditions) \cite{aumonier}. Our results do not fit into this framework.  Although the fiber of $\tilde \Alg_{d,n}(C,X)$ above $L \in \Pic^d(C)$ equals the space of holomorphic sections $L \otimes V$ satisfying incidence conditions, these conditions are not a locally closed subset of the jet bundle $J(L \otimes V)$.  In fact, sections of $J(L \otimes V)$ satisfying these incidence conditions differ from the semi-topological model $\cM_{d,n}$ that we compare $\Alg_{d,n}(C,X)$ with.  (It is for this reason that we introduce $\cM_{d,n}$,  see \cref{remark:Whysemitop?} for further discussion).   Additionally, to obtain a stability range sharp enough to conclude \cref{thm:delPezzo}, we need a weaker condition than jet ampleness.

        Lehmann--Tanimoto have introduced a geometric analog of Batyrev--Manin's conjecture \cite{lehmann2019geometric}, concerning the number of irreducible components of the space of rational curves (related to degree $0$ homology).  In the del Pezzo surface case, this conjecture holds by the work of Testa \cite{Testa}.  From the point of view of the analogy with the arithmetic conjectures, stability for higher order homology groups is related to the constant that appears in Peyre's version of the Manin conjectures \cite{Peyre}.

        \subsection{Further questions}

        \subsubsection{Lower degree del Pezzo surfaces}  As mentioned above, our approach only yields partial information about degree $\leq 4$ del Pezzo surfaces.  For instance, we do not know the answer to the following question.   For a degree $4$ del Pezzo, which is isomorphic to $\bbP^2$ blown up at $5$ general points,  with $\alpha$ the anti-canonical class  $(3,1,1,1,1,1)$ is the map $$H_i(\Alg_{k \alpha,*}(\bbP^1,X)) \to H_i(\Top_{k \alpha,*}(\bbP^1, X))$$ is an isomorphism for $k \gg 0?$ (More generally, we can consider the same question for classes $\alpha$ lying in the cone spanned by an $\epsilon$-neighborhood of the anti-canonical class).  

        \subsubsection{Positive mapping spaces} \label{posmapsquestion} 
        
        Motivated by the appearance of the space of positive maps in \cref{thm:blowupThm},  we  ask the following question.   Let $M$ be a manifold, and let $E \subseteq M$ be a codimension-$2$ submanifold with connected components $E_1, \dots, E_r$, equipped with an orientation of its normal bundle $N_E$.
	
	Let $C$ be an oriented surface.   We consider the space $\Top^+(C, M)\subseteq \Top(C,M)$  of maps $f: C \to M$ with $f\inv(E)$ discrete and all local intersection multiplicities positive.   Given $n \in \bbZ^{r}$ we let $\Top_{n}(C, M)$ denote the space of maps $f: \Sigma \to M$ such that $f_*(c_1(N_{E_{i}})  \cap [C] = n_i$.   (When $C$ is not compact, by convention $[C] = 0$ so this intersection number is always zero.)
		
	\begin{itemize}
			\item Does the inclusion $\Top^+_n(C, M) \to \Top_{n}(C, M)$ induce an isomorphism on $H_i$ for all $n \gg 0$?  
	\end{itemize}
	Here by $n \gg 0$ we mean that $n_i \gg 0$ for every component $n_i$ of $n$.  A positive answer would be interesting, even in the specific case of a blowup of projective space, as it would immediately yield a generalization of \cref{non-positivecorollary} to the unpointed and higher genus case.

    \subsubsection{Higher genus maps to solv-varieties} 
        To obtain \cref{non-positivecorollary}, we apply the results of \cite{boyer1999stability} relating the spaces $\Alg_{*}(\bbP^1, X) \to \Top_*(\bbP^1, X)$, whenever $X$ admits an action by a solvable algebraic variety with dense orbit.  Is it possible to adapt these methods to the case where $\bbP^1$ is replaced by a higher genus curve, or to the unpointed case?  The generality of Segal's results in the case $X = \bbP^n$ suggests a positive answer,  which would yield a generalization of \cref{non-positivecorollary} to the unpointed and higher genus case.

    \subsubsection{Arithmetic and motivic analogs} 
        Our approach to computing the homology of $\Alg_{d,n}(C,X)$ (the Vassiliev method) is a categorification of a strategy for counting the number of points $\Alg_{d,n}(C,X)(\bbF_q)$ (an inclusion-exclusion or sieve argument).   Compared with de La Br\'eteche's argument establishing the Batyrev--Manin conjecture for a split degree 5 del Pezzo over $\bbQ$, it is surprising to us that a direct inclusion-exclusion approach to \cref{thm:delPezzo} is successful.  

        Can the inclusion-exclusion approach be used to establish the Batyrev--Manin conjectures for other degree $5$ del Pezzo (either over function fields or number fields)?  The main difficulty in adapting our approach lies in bounding the error term arising from truncating the inclusion-exclusion term at finite level.  In the setting considered in this paper, it suffices to bound the dimension of the homological contribution of the error term--so we expect our arguments to translate most easily to a ``motivic'' setting where we consider convergence of the classes $[\Alg_{d,n}(C,X)]$ in the Grothendieck ring of varieties with respect to a dimension filtration.

        \subsection{Acknowledgements} The authors would like to thank Alexis Aumonier,  Sylvain Douteau, Benson Farb, Peter Haine, Brian Lehmann, and Will Sawin for discussions related to the topic of this paper. We also thank Sho Tanimoto for helpful and detailed comments on a previous version of the paper.
        
        RD was supported during parts of the project by the European Research Council (ERC) under the European Union’s Horizon 2020 research and innovation programme (grant agreement No.\ 772960) and by the Danish National Research Foundation through the Copenhagen Centre for Geometry and Topology (DNRF151).

        PT was supported during parts of the project by NSF grant DMS-1903040.
			
	\section{Gysin maps and poset topology} \label{sec:review}

     \subsection{Gysin maps}  \label{subsec:Gysin}

For a locally compact topological space $X$,  we use $H^*_c(X)$ to denote the compactly supported cohomology.  We take $H^*_c(X)$ to be defined in terms of sheaf cohomology:  given any injective resolution of sheaves on $X$,   $\bbZ_X \simto \cF$, the compactly supported cohomology of $X$ is computed by  $H^*(\pi_! \cF)$,  where $\pi_!$ denotes the compactly supported global sections.  For a reference on sheaf cohomology and standard facts about operations on derived categories of sheaves see e.g. Iversen \cite{Iversen}.
We will write $C^*_c(X; \cF) = \pi_! \cF$ for the chain complex of compactly supported cochains (when the resolution is unspecified, the statements will be independent of the choice of resolution).

For a finite filtration by closed subspaces $$Z_0 \subseteq Z_1 \subseteq \dots \subseteq Z_n = X,$$  the induced co-filtration $$\cF \to i_{n-1 !} \cF|_{Z_{n-1}}  \to \dots \to i_{0 !} \cF|_{Z_{0}}$$ gives rise to a spectral sequence with $E_1$ page  $$\bigoplus_{i = 0}^N H^*_c(Z_i,Z_{i-1})  = \bigoplus_{i = 0}^N H^*_c(U_i),$$ converging to $H^*_c(X)$.  Here $U_i:= Z_i - Z_{i-1}$, and the identity $H^*_c(U_i)= H^*_c(Z_i,Z_{i-1})$ follows from the exact triangle $j_{U_i!} j_{U_i}^* \bbZ \to \bbZ_{Z_i} \to i_{Z_{i-1} *} i_{Z_{i-1}}^* \bbZ$.

Now suppose that $i:Y \subseteq X$ is a closed embedding,  and $\theta \in H^t(X,X-Y)$  is a class.  Following Fulton--Macpherson \cite{fulton-macpherson},  we think of $\theta$ as a generalized orientation class for $i$, to which we may associate a Gysin map as follows.  We have that $$H^t(X,X-Y) = H^t(R\pi_* Ri^! \bbZ_X) = \Ext^t_{\Sh(X)}(i_! \bbZ_Y, \bbZ_X),$$ where $R\pi_*, Ri^!$ denote functors on derived categories and for the first equality we use the exact triangle $i_! Ri^!\bbZ_{X} \to \bbZ_X \to Rj_{*} j^* \bbZ_Y$ and the adjointness of $i_!, Ri^!$.  Therefore $\theta$ induces a map in the derived category of sheaves on $X$ $$\theta: i_! \bbZ_Y \to  \bbZ_X[t].$$   Choosing an injective resolution $\bbZ_X \simto \cF$,  we may choose a representative  $\tilde \theta: i_!\cF|_Y \to \cF[t]$.  Applying $\pi_!$  we obtain a Gysin map  $\theta_!: H^*_c(Y) \to H^{*+t}_c(X)$, independent of the choice of $\tilde \theta$.
Considering the filtration $Z_i \cap Y$,  the representative $\tilde \theta$ induces a map of co-filtered sheaves,  so we obtain a  Gysin map of spectral sequences:  $$\theta_!:\bigoplus_{i = 0}^N H^*_c(U_i \cap Y) \to \bigoplus_{i = 0}^N H^{*+t}_c(U_i ).$$ 
In particular, if $\theta$ induces an isomorphism  $H^*_c(U_i \cap Y) \to H^*_c(U_i)$  for all $i$,  then we have $\theta_{!}: H^*_c(Y) \iso H^{*}_c(X)[t]$.

More generally, suppose we are given a poset $P$ and collection of closed subsets $Z_{p} \subseteq X$, for $p \in P$ satisfying $Z_p \supseteq Z_q$ for $p \leq q$. 
\begin{defn} We refer to the data of $P$ and $\{Z_{p}\}_{p \in P}$ as a \emph{stratification of $X$}.  The \emph{closed strata} are the sets $Z_p$, and the  \emph{locally closed strata} associated to $p \in P$ are the sets $S_p := Z_p - \bigcup_{q > p} Z_q$. 
\end{defn} 
 Assume that $P$ is finite, and choose an injective homomorphism of posets $f:P \to (\bbN,{\leq})$. From $f$, we obtain an associated filtration $$Z_{0} \supseteq \dots \supseteq Z_N, \qquad Z_i := \underset{p, ~ f(p) \geq i} \bigcup Z_p.$$ Consider the associated compactly supported cohomology spectral sequence.   %

Assume that for every $x \in X$, the set $\{p ~|~ p \in Z_x\}$ has a maximal element (this implies that $S_p \cap S_q = \emptyset$ for $p \neq q$).   Then $Z_i - Z_{i-1} = S_{f \inv(i)}$ and so we obtain a spectral sequence  with $E_1$ page $\bigoplus_{p} H^*_c(S_p) $, converging to $H^*(X)$.   Again, a closed embedding $i:Y \to X$ and orientation class $\theta \in H^*(X,X-Y)$ induces a Gysin map of spectral sequences.  

Without the assumption that $\{p ~|~ p \in Z_x\}$ has a maximal element, $Z_i - Z_{i-1} \subseteq S_{f \inv(i)}$ is an open subset  and the spectral sequence takes the form $\bigoplus_{p} H^*_c(S_p')$ where $S_p' \subseteq S_p$ is an open subset (depending on the choice of ordering $f$).

The following compatibility between Poincar\'e duality and Gysin maps is standard (see e.g.\ \cite[Problem~11-C]{milnor-stasheff}), we state it here for convenience.
\begin{prop}\label{gysinPD}
	Let $E \subseteq F$ be an inclusion of complex vector bundles over a manifold $M$. Write $\dim F$, $\dim E$ for the real dimension of the total spaces of $F,E$ respectively.  Let $\theta \in H^{\dim F - \dim E}(F, F-E)$ be the associated Thom class.  Then for every open subsets $U \subseteq F$, the diagram 
\[
\begin{tikzcd}
{H_i(U \cap E)} \arrow[d] \arrow[r] & { H_i(U)} \arrow[d] \\
{H^{\dim E - i}_c(U \cap E)} \arrow[r, "\theta_!"]      & { H_c^{\dim F - i}(U)  }          
\end{tikzcd}
\]
commutes, where the vertical arrows are the Poincar\'e duality isomorphisms.
\end{prop}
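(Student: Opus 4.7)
The plan is to recognize the statement as the standard identification of the Thom-class Gysin map with the Poincar\'e dual of the homological pushforward for a closed embedding of oriented manifolds. Both $U$ and $U \cap E$ inherit canonical orientations from the complex structures on $F$ and $E$, and $\theta$ restricts to a Thom class for the pair $(U, U - U \cap E)$. So the content of the proposition is the identity $\theta_! = PD_U \circ i_* \circ PD_{U \cap E}^{-1}$ for the closed inclusion $i : U \cap E \hookrightarrow U$.

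I would attack this by local reduction. Every corner and every arrow in the diagram is natural under open inclusions $U' \hookrightarrow U$: homology is functorial, compactly supported cohomology extends by zero, the sheaf-theoretic Gysin map $\theta_!$ commutes with restriction of the defining sheaf morphism $i_! \bbZ_{U \cap E} \to \bbZ_U[\dim F - \dim E]$, and Poincar\'e duality for an oriented manifold is compatible with both. A Mayer--Vietoris plus five-lemma argument then lets me replace $U$ by any convenient open cover, so I may assume $U$ lies inside a trivializing chart for $F$ over which $E$ is the trivial sub-bundle $V \times \bbC^e \times \{0\}^{f-e} \subseteq V \times \bbC^f$. On such a chart both the pushforward $i_*$ and the Gysin map $\theta_!$ factor as external products with the identity on $V \times \bbC^e$, reducing the question to the zero-section inclusion $\{0\} \hookrightarrow \bbC^{k}$ with $k = f - e$. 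For this the verification is immediate: $H_0(\{0\}) \to H_0(\bbC^k)$ is an isomorphism whose Poincar\'e duals are $1 \in H^0_c(\{0\})$ and the canonical orientation generator of $H^{2k}_c(\bbC^k)$, which is by definition the Thom class.

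The main bookkeeping issue is to match the derived-category Gysin map used in the paper with the classical cup-product Gysin map that underlies the local computation (and Milnor--Stasheff Problem 11-C); one must also track signs coming from the various complex orientations under the product decomposition. A clean way to avoid these translations is to package the entire statement via Verdier duality: the Thom class $\theta$ can be identified with the isomorphism $i^! \bbZ_U[\dim U] \simeq \bbZ_{U \cap E}[\dim (U \cap E)]$ provided by the orientations, and under this identification $\theta_!$ is the pushforward $i_!$ in the derived category, whose effect on compactly supported global sections is tautologically the Poincar\'e dual of $i_*$ on homology.
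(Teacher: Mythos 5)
The paper does not prove this proposition at all: it declares it standard and cites Milnor--Stasheff Problem 11-C. So your proposal cannot be ``the same as the paper's proof''; you are supplying an argument where the authors declined to. Both routes you sketch are correct in substance, and the second (Verdier duality) is in fact the one best adapted to the way this paper actually defines $\theta_!$.

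A few remarks on the first route. Reducing by Mayer--Vietoris and the five-lemma is not quite the right phrasing, because you are trying to establish commutativity of a square, not an isomorphism; what you actually need is that the difference $\theta_!\circ PD_{U\cap E} - PD_U\circ i_*$ is a natural transformation (in $U$, with respect to extension by zero on $H^*_c$ and the induced maps on $H_*$) which is moreover compatible with the Mayer--Vietoris connecting maps, and which vanishes on a basis of opens. This is standard but deserves to be stated; a bare five-lemma does not give you commutativity of a diagram. You also compress two separate reductions: one to a trivializing chart for $F$, and then a further one (another Mayer--Vietoris) to product opens $U_1\times U_2$ so that the external-product/K\"unneth compatibilities of $i_*$, $\theta_!$ and Poincar\'e duality can be invoked to peel off the $\mathrm{id}_{U_1}$ factor. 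After both, the verification at $\{0\}\hookrightarrow\bbC^k$ is exactly as you say. The only real content left is matching signs, which the complex orientations fix uniformly.

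The Verdier-duality packaging at the end is the cleanest and is genuinely tautological once one checks one identification. With $i:U\cap E\hookrightarrow U$ closed, the orientations give $\omega_{U\cap E}\simeq\bbZ_{U\cap E}[\dim E]$ and $\omega_U\simeq\bbZ_U[\dim F]$, and the counit $i_!\,i^!\omega_U\to\omega_U$ with $i^!\omega_U=\omega_{U\cap E}$ induces, after applying $\pi_!$ and using $H_*(-)=H^{-*}_c(-,\omega)$, precisely the homological pushforward $i_*$. Under the orientation isomorphisms this counit becomes a map $i_!\bbZ_{U\cap E}\to\bbZ_U[\dim F-\dim E]$, and the one thing to check is that this class equals $\theta$: both are Thom classes for the pair $(F,F-E)$, i.e.\ both restrict to the preferred generator in each fiber $H^{\dim F-\dim E}(F_x,F_x-E_x)$ determined by the complex structures, hence agree. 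Given that the paper defines $\theta_!$ sheaf-theoretically as $\Ext(i_!\bbZ_{U\cap E},\bbZ_U[\dim F-\dim E])$ applied to $\pi_!$, this is really the argument the authors have in mind, and I would lead with it rather than treat it as an afterthought.
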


\subsection{Topological posets and bar complexes}
In this section, we recall several facts about simplicial resolutions arising from  topological posets.  

	A \emph{topological poset} is a topological space $P$ and a relation $\cR_P \subseteq P \times P$,  that is  reflexive, antisymmetric, and transitive.  We say that the relation $\cR_P$ is \emph{proper} if both projections $\cR_P \to P$ are proper.

A \emph{$P$-space}, consists of a continuous map $Z \to P$  together with continuous map of spaces over $P$ $$a: \cR_P \times_{\pi_2} Z  \to Z$$ where $\pi_2: \cR_P \to P$ denotes the map $(p \leq q) \mapsto q$  and $ \cR \times_{\pi_2} Z$ is considered as a space over $P$ via  $\pi_1$.   We write $Z_p$ for the fiber of $Z$ over $p \in P$.

Given a $P$-space, we may form the bar construction:

\begin{defn} Let $P$ be a topological poset such that the diagonal $\Delta_P$ is open in $\cR_P$.  Let $Z$ be a $P$-space.    The (semi-simplicial) \emph{bar  construction} is  $$\rB_t(Z,P):= \{p_0 < p_1 < \dotsb <p_t \in P, ~z \in Z_{p_t}\} =  (\cR_P  - \Delta_P)^{\times_P (t+1)} \times_{P} Z. $$
Then $\rB_\bullet(Z, P)$ is a semi-simplicial space with boundary maps given by forgetting elements in the chain and applying the action map $a$  when $p_t$ is forgotten.   We write $|\rB(Z,P)|$  for the geometric realization.
\end{defn}

We use topological posets to define a continuous stratifications of a topological space.  Let $X$ be a  topological space, and $P$ be a topological poset.

\begin{defn} \label{MainVassilievConstruction} 
   A \emph{continuous stratification of $X$ by $P$} is a closed subspace $Z \subseteq P \times X$ satisfying $Z_p \supseteq Z_q$ if $p \leq q$.  
\end{defn}

Let $Z \subseteq P \times X$ be a continuous stratification of $X$, and let $W \subseteq P$.  Since $Z$ is canonically a $P$ space,  we may use the bar construction to resolve the complement of the union of strata  $\bigcup_{w \in W} S_w \subseteq Z \times_P W$, in terms of the following poset associated to $W$ and $P$.   

      \begin{defn}  Let $W < P$  be the following topological poset.  We let $$W < P := \{(w, p) \in W \times P ~|~   w < p\} = W \times_{P} (\cR_P - \Delta_P) ,$$ and define the relation \[\cR_{W < P}:= \{ (w, p_1), (w, p_2) ~|~ w < p_1 \leq p_2\} =  (\Delta_W \times \cR_P) \cap  (W < P)^{\times 2}. \qedhere\]  
\end{defn} There is a a morphism of posets $\pi: (W < P) \to P$, and a  map of topological spaces $(W < P) \to W$. 
Pulling $Z$ back to $W < P$, we obtain a $(W < P)$ space, denoted $\pi^* Z$.   Applying the above bar construction, we obtain a semi-simplicial space  $$\rB(W, P, Z) := \rB(W < P, \pi^*Z),$$  together with  an augmentation map $a:\rB(W,P, Z) \to W\times_P Z $ that takes a chain $w < p_1 < \dots < p_r,  z \in Z_{p_r}$ to the pair $w \in W, z \in Z_w$.

	\section{Posets of divisors and section spaces}
	\label{sec:Hilb}

In this section, we introduce a family of  topological posets and use them to stratify spaces of algebraic sections by incidence conditions.  The level of generality taken here is greater than necessary for our main application, but we have found the general framework helpful, and include it with an eye towards later work.  We will include the case relevant for applications as a running example.

\subsection{Divisors labeled by $Q$.}  Let $C$ be an algebraic curve. We write $\overline\Hilb(C)$ for the poset of all closed subschemes of $C$ (including $C$), and $\Hilb(C)$ for the poset of finite length subschemes.  

\begin{defn}
Let $Q$ be a finite poset with top element $\hat 1$.  We put $\tilde Q := Q - \hat 1$  We define  $\overline \Hilb(C)^{Q}$  to be the poset of homomorphisms  $Q \to \overline \Hilb(C)$ that take $\hat 1$ to $C$.  We let $\Hilb(C)^Q$ denote the subposet consisting of homomorphisms such that the preimage of $C$ is precisely $\hat 1$.
\end{defn} 
In other words, an element $x \in \Hilb(C)^{Q}$ consists of a collection of finite closed subschemes $$\{x_q \subseteq C\}_{q \in \tilde Q},$$  satisfying the property  $x_p \subseteq x_q $ if $p \leq q$.   We have that $$\Hilb(C)^{Q} \subseteq \prod_{q \in \tilde Q} \Hilb(C)$$ is a closed subscheme.

 We may equivalently describe an element of $\Hilb(C)^{Q}$  as a function which assigns to each $c \in C$  a family of closed subschemes $\{x_q\}_{q \in \tilde Q}$, which are supported at $c$ and satisfy  the containment relation $x_p \subseteq x_q$ for all $p \leq q$.  Since finite subschemes supported at $c$ are determined by their multiplicity (i.e. $\Hilb(C)$ can be identified with finitely supported functions $C \to \bbN$), this yields a correspondence between points of $\Hilb(C)^{Q}$  and finitely supported functions  $$C \to  \Hom(\tilde Q, \bbN ).$$    Here $\bbN \cup \infty$ is the poset of non-negative numbers and $\Hom$ denotes the poset of (poset homomorphisms $Q \to \bbN$.     
 The \emph{trivial homomorphism} is the homomorphism  
 $q \mapsto 0$ for all $q \in \tilde Q$,  and the \emph{support} of a function $C \to \Hom(\tilde Q, \bbN)$ is the set of $c \in C$  that map to a nontrivial homomorphism.

\subsection{Stratifying sections} \label{generalstratification}
  
In this subsection, we use $ \Hilb(C)^Q$ to continuously stratify spaces of sections over $C$.

 For the remainder of the paper, we will assume that $Q$ has all meets.    Let $\cX \to C \times \cB$ be a  family of schemes of finite presentation over $C \times \cB$, together collection of closed subschemes  $\cK_q \subseteq \cX$ for all $q \in Q$  satisfying: \begin{itemize}
\item $\cK_{\hat 1} = X$ 
\item $\cK_{p \wedge q} = \cK_p \cap \cK_q$  for all $p,q \in Q.$  
\end{itemize}
There is an algebraic space of sections $\Gamma_{\rm alg}(C, \cX) \to \cB$, representing the functor $$(f:T \to B) \mapsto \Gamma(C \times T, f^* \cX).$$  (c.f.\ \stackscite{0CXK}).  
 (In the cases of interest $\Gamma_{\rm alg}(C, \cX)$ will be representable by a scheme).

There are universal algebraic subspaces  $$\cD_q  \subseteq \Hilb(C)^{Q} \times C$$ for $q \in Q$,  which satisfy   $\cD_{q} \subseteq \cD_{q'}$  if $q \leq q'$. (The fiber of $\cD_q$ over $D\in \Hilb(C)^Q$ is the divisor $D_q$).  By restricting sections of $C$ to sections of $D_q$,  we may form the scheme  $\Gamma_Q(C, \cX)$  
parameterizing sections $s \in \Gamma(C, \cX_b)$ such that $s(D_q) \subseteq (\cK_q)_b$  as a fiber product:

$$\begin{tikzcd}
{\Gamma_Q(C, \cX) } \arrow[d] \arrow[r]         & \ \underset{q \in \tilde Q} \prod \Gamma(\cD_q, \cK_q) \arrow[d] \\
{ \Gamma(C, \cX) \times  \Hilb(C)^{Q} } \arrow[r] & {\underset{q \in \tilde Q}\prod\Gamma(\cD_q, \cX)} .           
\end{tikzcd}$$

Note that the underlying topological space of $\Gamma_Q(C, \cX)$ is an $\ \Hilb(C)^{Q} \times B$-space:  if $D_{q} \subseteq D_q'$ for all $q \in Q$ then the set of sections mapping $Z_{q}'$ to $\cK_{q}$ is contained contained in the set of sections mapping $D_q$ to $\cK_q$.  Thus $\Gamma_Q(C, \cX)$ defines a continuous stratification of  $\Gamma(C, \cX)$ by $ \Hilb(C)^{\rQ} \times \cB$, according the preimages of $\cK_q$.

\begin{ex}\label{mainex1}
	 The following is our main example.  Let $V$ be a vector space, and $\ell_1, \dots, \ell_r \subseteq V$ be lines.    Let $\cB = \Pic^d(C)$ and $\cX = \cL_d \otimes V$.   Then $\cX$ is stratified by the poset $Q_r:= \{V,\ell_1, \dots, \ell_r, 0\}$ consisting of the subspaces $\ell_i$, $V$ and $0$ ordered by containment: the stratum corresponding to a subspace $S\subseteq V$ is $S \otimes \cL_d$.
\end{ex}

\subsection{Saturated elements}    Many distinct elements of $\Hilb(C)^Q$ impose the same incidence conditions on a space of sections, and are in a sense ``redundant.''  Using the lattice structure of $Q$, we may construct a smaller poset that removes these redundancies and is combinatorially simpler, keep in mind that $Q$ is finite and assumed to have meets.

\begin{defn} 
We say that an element $(x_q)_{q \in Q} \in \Hilb(C)^{Q}$  is \emph{saturated}  if for every $S \subseteq Q$  the natural containment $$x_{\underset{s \in S}\wedge s}  \subseteq \bigcap_{s \in S}  x_s $$ is an equality.   We write $Q^{\rJ C}$  for the  \emph{subposet of saturated elements}.   Note that $Q^{\rJ C}$ is a Zariski open subset of $\Hilb(C)^Q$.
\end{defn}

We defined an element of $\Hilb(C)^{Q}$ to be saturated if and only if the associated homomorphism $Q \to \Hilb(C)$  preserves meets.    In terms of finitely supported functions $C \to \Hom(\tilde Q, \bbN )$, saturated elements are functions which assign to every $c \in C$  a meet-preserving homomorphism $\tilde g_c:  \tilde Q \to \bbN$.    

We write $g_c: Q \to \bbN \cup \infty$ for the extension of $\tilde g_c$ defined by $g_c(\hat 1) = \infty$.  Because $g_c$ preserves meets, it admits a left adjoint $f_c: \bbN \cup \infty \to Q$  given by $$ f_c(n) := \inf \{ q \in Q ~|~ g_c(q) \geq n \}.$$   Conversely the adjoint $f_c$, which must be join preserving, uniquely determines $g_c$. More precisely we obtain a bijection:
\begin{equation}\label{chain} \begin{multlined}\{  f_c: \bbN \cup \infty \to Q ~| ~ f_c \text{ join preserving and } f_c(\infty) = \hat 1\}\\
\longleftrightarrow \{\tilde g_c: \tilde Q \to \bbN \text{ meet preserving} \} \end{multlined}
\end{equation}
Here we have used that for an adjoint pair $(f_c, g_c)$ the condition that $f_c(\infty) = \hat 1$ is equivalent to $g_c\inv (\infty) = 1$. 

The correspondence \eqref{chain} is inequality reversing:  we have that $f_c \leq f_c'$ if and only if $g_c \geq g_c'$.   It motivates the following definitions.

\begin{defn}
We call a function  $f: \bbN \cup \infty \to Q $ a \emph{chain of $Q$} if $f(\infty) = \hat{1}$ and it preserves joins, i.e. satisfies $f(0) = 0$ and $f(\infty) = f(n) = \hat 1$ for all $n \gg 0$. We write $\Ch(Q)$ for the set of all chains of $Q$.  The \emph{trivial chain} is the largest element of $\Ch(Q)$:  the function $0 \mapsto 0$ and $i \mapsto \hat 1$ for  $i > 0$.  
\end{defn}

We have the following immediate consequence of \eqref{chain}.

\begin{prop}
	There is a canonical bijection  between $Q^{\rJ C}$  and the set of finitely supported functions $C \to \Ch(Q)$.  (Here the support of $f$ is the set of $c$ for which the chain $f(c)$ is nontrivial.)  
\end{prop}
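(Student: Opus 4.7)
The plan is to assemble the bijection directly from the two descriptions already in hand: the identification of $\Hilb(C)^Q$ with finitely supported functions $C \to \Hom(\tilde Q, \bbN)$, and the pointwise adjoint correspondence \eqref{chain}. Given $x \in \Hilb(C)^Q$, the associated function $h_x\colon C \to \Hom(\tilde Q, \bbN)$ sends $c$ to the homomorphism $q \mapsto \mathrm{length}_c(x_q)$. I would show that $x \mapsto h_x$ restricts to a bijection from $Q^{\rJ C}$ onto finitely supported functions $C \to \Ch(Q)$.

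The first step is to recognize that saturation is a pointwise condition. Since $x_p \cap x_q$ has length at $c$ equal to $\min(h_x(c)(p), h_x(c)(q))$, the equality $x_{\bigwedge_{s \in S} s} = \bigcap_{s \in S} x_s$ holds if and only if $h_x(c)\bigl(\bigwedge_{s \in S} s\bigr) = \min_{s \in S} h_x(c)(s)$ at every $c \in C$. Extending $h_x(c)$ to $g_c \colon Q \to \bbN \cup \infty$ by $g_c(\hat 1) = \infty$, and using that $\infty$ is the identity for $\min$, this becomes the condition that $g_c$ preserves all meets in $Q$. Thus $x$ is saturated precisely when each $g_c$ is a meet-preserving map $Q \to \bbN \cup \infty$ with $g_c\inv(\infty) = \{\hat 1\}$; equivalently, $\tilde g_c := g_c|_{\tilde Q} \colon \tilde Q \to \bbN$ is meet-preserving.

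Next I would invoke \eqref{chain} fiberwise. Under that correspondence, each such $\tilde g_c$ matches a unique join-preserving left adjoint $f_c\colon \bbN \cup \infty \to Q$ with $f_c(\infty) = \hat 1$. The remaining defining properties of an element of $\Ch(Q)$ are then automatic: $f_c(0) = \inf\{q \in Q : g_c(q) \geq 0\}$ is the bottom of $Q$, and since $\tilde Q$ is finite the function $g_c$ takes only finitely many finite values, so $f_c(n) = \hat 1$ once $n$ exceeds all of them. Moreover $f_c$ is the trivial chain exactly when $g_c|_{\tilde Q} \equiv 0$, so the finite-support condition on $h_x$ translates to the finite-support condition on the associated map $C \to \Ch(Q)$.

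No genuine obstacle appears: the proposition is essentially a packaging of the adjoint correspondence \eqref{chain} together with the pointwise reformulation of saturation. The only item that requires care is the bookkeeping with the asymmetric roles played by $\hat 1$ in $Q$ and $\infty$ in $\bbN \cup \infty$, especially in verifying that the condition defining $\Hilb(C)^Q$ inside $\overline{\Hilb}(C)^Q$, namely that $x_q$ is a proper (hence finite length) subscheme for each $q \in \tilde Q$, corresponds under adjunction to $g_c$ taking finite values on $\tilde Q$.
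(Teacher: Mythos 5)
Your proposal is correct and follows essentially the same route as the paper, which presents the proposition as an immediate consequence of the fiberwise adjoint correspondence \eqref{chain} applied to the identification of saturated elements with finitely supported $C \to \Hom(\tilde Q, \bbN)$ whose pointwise homomorphisms preserve meets. You fill in two details the paper leaves implicit --- that intersection of finite subschemes corresponds to pointwise minimum of multiplicities, so saturation is genuinely a pointwise condition, and that the finite-support conditions and triviality of chains match up under the adjunction --- but the structure of the argument is the same.
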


\begin{remark}
	We use the notation $Q^{\rJ C}$  because it is possible to think of the chain $f_c$ as describing the incidence relations of a map from the jet at $c$  to $Q$:  the $i$th element of the chain represents the smallest stratum that $i$th order jet is contained in. 
\end{remark}

\begin{defn} We define the \emph{saturation function} $${\rm sat}: \Hilb(C)^{Q} \to Q^{\rJ C}$$  to be the left adjoint to the inclusion of the subposet of saturated elements. In other words, $\rm sat$ takes  an element of $x \in\Hilb(C)^{Q}$ to the smallest saturated element that is greater than or equal to $x$.  The function $\rm sat$ exists because $Q^{\rJ C}$ is closed under meets.  
\end{defn}

\begin{remark}
	Note that $\sat$ is \emph{not} continuous if $Q^{\rJ C}$ is given the subspace topology as a Zariski open subset of $\Hilb(C)^Q$.  However (for our purposes) it  would be more natural to think of $Q^{\rJ C}$ as a quotient poset of $\Hilb(C)^Q$ (making $\mathrm{sat}$ a quotient map), from which the stratification of the space of sections is pulled back. Then bar constructions over $\Hilb(C)^Q$ would retract onto bar constructions over $Q^{\rJ C}$, simplifying them considerably.  
	
	 Unfortunately, when one attempts to topologize $Q^{\rJ C}$ as a quotient, one obtains a non-Hausdorff space which is difficult to understand.  For this reason, we will work directly with $\Hilb(C)^Q$ but show that (after stratification) bar constructions over $\Hilb(C)^Q$ are closely related to $Q^{\rJ C}$.  This indirect relationship between $Q^{\rJ C}$ and $\Hilb(C)^Q$ is a key technical point for us.
  \end{remark}
 
 \begin{ex}\label{mainex2}
 We continue our main example from \cref{mainex1}, with the poset $Q_r = \{V, \ell_1, \dots, \ell_r, 0\}$.  In this case, an element of $\Hilb(C)^{Q_r}$ consists of a collection of divisors $D_{\ell_1}, \dots, D_{\ell_r}, D_0 \in \Hilb(C)$, such that $D_0 \subseteq D_{\ell_i}$ for all $i$.    An element is saturated if and only if $D_{0} = D_{\ell_i} \cap D_{\ell_j}$ for all $i \neq j$.  The saturation operation replaces $D_0$ by $D_0' = \bigcup_{i \neq j} D_{\ell_i} \cap D_{\ell_j}$ and every other $D_{\ell_i}$ by $D_{\ell_i} \cup D_0'$.  
 
For $c \in C$, the associated function $\tilde g_c: \tilde Q_r \to \bbN$ consists of values $\tilde g_c(\ell_i), \tilde g_c( 0)$ recording the multiplicity of $D_{\ell_i}$ and $D_0$ at $c$.  This function is saturated if and only if $\tilde g_c(0)$ equals all but the maximum of $\{\tilde g_c(\ell_i)\}_{i = 1}^r$.        In this case, suppose that the maximum is achieved by $\ell_j$, and let $m_0 = \tilde g_c(0)$ and $m_{\ell_j} = \tilde g_c(\ell_j) - \tilde g_c(0)$.   The associated chain $f_c : \bbN \cup \infty \to Q_r$ is given by $$f_c(n) = \begin{cases} 0 & \text{ if $n \le m_0$} \\ \ell_j &\text{ if $m_0 < n \le m_{\ell_j} + m_0$} \\ \infty &\text{ if $ m_{\ell_j} + m_0 < n$}.  \end{cases}$$ We denote this chain by $m_0 0 +  m_{\ell_j} \ell_j$ (because on $\bbN_{>0}$, it takes the value $q$, $m_q$ times).   \end{ex}

\subsection{Essential saturated elements}

Because $Q$ has meets, the poset of chains $\Ch(Q)$ has joins:  the join of the chains $q_0 \leq q_1 \dots$ and $p_0 \leq p_1 \dots$ is $p_0 \wedge q_0 \leq \dots$.  (Meets and joins are exchanged because our convention is that the ordering $\Ch(Q)$ is opposite to $Q$).   There is a canonical injective homomorphism $Q\op \to \Ch(Q)$ given by taking $q$ to the chain $q \leq \hat 1 \leq \hat 1 \dotsb$.

\begin{defn} 
Let $f_0 \in \Ch(Q)$ be a chain, and let $S$ be the collection of chains $f_s$ such that $f_0 \prec f_s$, ie such that $f_0 < f_s$ and there is no $f'$ with $f_0 < f' < f_s$.
 We say that a pair of elements $f_0 \leq f \in \Ch(Q)/\Ch(Q)$ is \emph{essential} if $f$ can be obtained from $f_0$ by taking the join of some subset of $S$.  We say that $(w \leq x) \in (W \le Q^{\rJ C})$ is \emph{essential} if it is pointwise essential.  (In other words if for every $c \in {\rm supp}(w \leq x)$ we have that $f_c^w \leq f_c^x$ is essential).  \end{defn}

\begin{ex}\label{essential_example}
	In our main example, we take $Q = Q_r = \{0, \ell_1, \dots, \ell_r, V\}$ and let $W \subseteq \prod_{i = 1}^k \Sym^{k_i} C$ be the set of elements of the product with disjoint support.   We use $\ell_1, \dots, \ell_r$ as our distinguished generators.   Then  a saturated element $(w \leq x) \in (W \le Q^{\rJ C})$ is \emph{essential} if for every $c \in {\rm supp}(w \leq x)$ we have that $f_c^w \leq f_c^x $ is one of the following pairs of chains:
	\begin{itemize}
		\item $d \ell_i \leq (d+1) \ell_i$
		\item $d \ell_i \leq (d-1) \ell_i + 0$
		\item $d \ell_i \leq d \ell_i + 0$
		\item $V \leq \ell_i$ ~(Note that here $V$ denotes the chain $V \leq V \leq \dots$  and $\ell_i$ denotes the chain $\ell_i \leq V \leq \dots$).		\item $V \leq 0$ (if $r \geq 2$) \qedhere
	\end{itemize}
\end{ex}

Our main motivation for introducing essential pairs is the following version of the crosscut theorem.
\begin{prop}\label{essentialinitial}
	For any $f_0 \in \Ch(Q)$, the essential pairs in $f_0 < \Ch(Q)$ form an initial subposet of $f_0 < \Ch(Q)$. In particular $N( f_0 < \Ch(Q))$ deformation retracts onto the nerve of the subposet of essential pairs.
\end{prop}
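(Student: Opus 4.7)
The plan is to exhibit an explicit poset retraction $e\colon (f_0 < \Ch(Q)) \to \{\text{essential pairs}\}$ equipped with a natural transformation $e \Rightarrow \mathrm{id}$ given by $e(f) \leq f$. This will simultaneously establish initiality and the deformation retract.

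First I would check that the interval $[f_0, f] \subseteq \Ch(Q)$ is finite for every $f$. Translating through the order-reversing bijection \eqref{chain}, chains in $[f_0, f]$ correspond to meet-preserving functions $\tilde g' \colon \tilde Q \to \bbN$ with $\tilde g_f \leq \tilde g' \leq \tilde g_{f_0}$ pointwise; since $\tilde Q$ is finite and the bounding values $\tilde g_{f_0}(q), \tilde g_f(q)$ are natural numbers, there are only finitely many such $\tilde g'$. In particular, whenever $f_0 < f$ the finite nonempty set $\{f' : f_0 < f' \leq f\}$ has minimal elements, each of which is a cover $f_s \in S$ of $f_0$ with $f_s \leq f$.

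Next I would define
\[ e(f) \,:=\, \bigvee\{f_s \in S : f_s \leq f\}, \]
using that $\Ch(Q)$ admits all joins since $Q$ has all meets. The indexing set is nonempty by the previous step, so $f_0 < e(f) \leq f$, and $e(f)$ is essential by construction. The map $e$ is monotone because enlarging $f$ only enlarges $\{f_s \leq f\}$. And $e$ restricts to the identity on essential pairs: if $f = \bigvee T$ with $\emptyset \neq T \subseteq S$, every $f_s \in T$ satisfies $f_s \leq f$, so $T \subseteq \{f_s \leq f\}$ and $f \leq e(f) \leq f$.

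For initiality, I would observe that for any $f \in (f_0 < \Ch(Q))$ the element $e(f)$ is actually a \emph{maximum} of the slice $\{f' \text{ essential} : f' \leq f\}$: if $f' = \bigvee T'$ is essential with $f' \leq f$, then $T' \subseteq \{f_s \leq f\}$ forces $f' \leq e(f)$. A slice with a terminal object is contractible, so the inclusion is an initial functor of posets; the natural transformation $e \Rightarrow \mathrm{id}$ furnished by $e(f) \leq f$ then realises the inclusion on nerves as a deformation retract via the standard simplicial prism construction. The only delicate point is the finiteness of $[f_0, f]$ that guarantees existence of atoms; everything afterwards is formal.
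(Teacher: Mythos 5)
Your proof is correct and uses the same construction as the paper: you take $\tilde f = e(f)$ to be the join of the covers of $f_0$ lying below $f$, observe that this produces the maximum essential element $\leq f$, and conclude that the inclusion of essential pairs admits a right adjoint, whence both initiality and the deformation retraction. The paper's proof states exactly this (with "$f_0 \prec f_s \leq f$'' intended where it writes "$f \prec f_s \leq f$'', evidently a typo) in three sentences; you supply the supporting details it leaves implicit, in particular the finiteness of $[f_0,f]$ guaranteeing that covers exist, which is worth making explicit since $\Ch(Q)$ itself is infinite.
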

\begin{proof}
	By definition, we need to show that for any $(f_0, f)$  there is a maximal essential pair $(f_0, \tilde f) \leq (f_0, f)$.  We take $\tilde f$ to be the join of every $f_s$ such that $f \prec f_s \leq f$.  Then the inclusion of the subposet of essential pairs admits a right adjoint, and so the statement follows.  \end{proof}

\subsection{Combinatorial functions on $Q^{\rJ C}$}\label{combinatorialfunctions}
 	 Let $h$ be a function $h: Q \to \bbN$,  satisfying $h(\hat 1) = 0$.    

We may extend $h$ to a function $Q^{\rJ C} \to \bbN$, as follows.  First we may extend $h$ to a function $h: \Ch(Q) \to \bbN$ by defining $h(f) = \sum_{n \ge 1}f(n)$, which is a finite sum since $f(n) = \hat{1}$ for $n \gg 0$.
Next recall that an element $x \in Q^{\rJ C}$ corresponds to a finitely supported function $$C \to \Ch(Q), \qquad c \mapsto f_c, $$       
so we define $h(x) :=  \sum_{ c \in C} h(f_c)$.

We may further extend $h$ to a function on $\Hilb(C)^{Q}$ by precomposing with $\sat$, and extend to pairs of elements by declaring $h(w \leq x) = h(w) - h(x)$.

\begin{ex}\label{multex}
	Let $q_0 \in \tilde Q$, and let $m_{q_0}$ be the indicator function defined by $m_{q_0}(q) =1 $, if $q = q_0$ and $m_{q_0}(q) = 0$ otherwise.   We refer to $m_{q_0}: \Hilb(C)^Q \to \bbN$ as the \emph{multiplicity of $q$}.
\end{ex}

\begin{ex}\label{rankex}
    Let $\rank: \Ch(Q) \to \bbN$ be the rank function, where $\rank(x)$ is the length of the longest chain from $\hat 1$ to $x$.    We extend $\rank$ to a function on $Q^{\rJ C}$ as above.   The value $\rank(x)$ is the length of the maximal chain starting at $\hat 1$ and ending at $x$,  or equivalently the dimension of the simplicial complex $\rN([\hat 1,\sat(x)] \cap Q^{\rJ C})$.    We note that when $Q$ is a \emph{graded poset} in the sense that every maximal chain has the same length,  then the rank function on $\Ch(Q)$ can be obtained from the rank function on $Q$ by the extension procedure above.
\end{ex}

\begin{ex}\label{suppex}
    Given $x \in \Hilb(C)^Q$ we define $\supp(x)$  to be the cardinality of the support of $x$.  This function can be obtained from the above extension procedure from the element of $\Ch(Q)$ that is $0$ on the trivial chain and $1$  otherwise.   The value $|\supp(x)|$ is the complex dimension of the subspace of elements $x' \in Q^{\rJ C}$ with the same \emph{combinatorial type} as $x$ (to be defined in the next section).
\end{ex}

\begin{ex}. Let $Q_r = \{0, \ell_1, \dots, \ell_r, V\}$ be the poset in our main example.  We have two important examples of functions. 

\begin{enumerate}
 	\item We have that $\rank(\ell_i) = 1$ and $\rank( 0) = 2$.  This poset is graded, so these values determine the others.
	\item   Let $\gamma: Q_r \to \bbN$ be the function defined by $\gamma(\ell_i) = 2(\dim V - 1)$ and $\gamma(0) = 2\dim V$.  We extend $\gamma$ to a function on $Q_r^{\rJ C}$.  Heuristically  $\gamma(x)$, is the expected real codimension of the incidence conditions imposed by $x$.  \qedhere
\end{enumerate}

\end{ex} 

\section{Combinatorial stratifications of \texorpdfstring{$\Hilb(C)^Q$}{Hilb(C)\^Q} and \texorpdfstring{$Q^{\rJ C}$}{Q\^{JC}}} \label{combinatorialtypes}

Set theoretically, both  $Q^{\rJ C}$ and $\Hilb(C)^Q$ can be decomposed into a disjoint union of labeled configuration spaces.  To describe these decompositions, we introduce some terminology.  We will use these decompositions to stratify bar constructions in the next section.

\subsection{Combinatorial types} Recall that an element $x \in \Hilb(C)^Q$  corresponds to a finitely supported function $$C \to \Hom(\tilde Q, \bbN) \qquad c \mapsto g_c.$$  The \emph{combinatorial type}  of such a element is the \textbf{multiset} of  functions  $$\type(x):=\{ g_{c_1}, \dots,  g_{c_r}: \tilde Q\to \bbN\},$$ where $c_i \in C$ are the elements such that $ g_{c_i}$ is nontrivial  (i.e.\ not the constant function $0$ on $\tilde Q$).    The \emph{set of combinatorial types} is the set of all multisubsets of functions $g: \tilde Q \to \bbN$.

Note that $x$ is an element of $Q^{\rJ C}$ if and only if each $\tilde g \in \type(x)$ is meet preserving, in which case each $g$ corresponds to a chain $f \in \Ch(Q)$.    In this case, we say the multi-subset, $\type(x)$, is \emph{saturated}.    There is a canonical bijection between saturated types and multisubsets of $\Ch(Q)$; we will use this bijection to specify saturated types.  Furthermore we obtain a saturation function from combinatorial types to saturated types, by  applying $\sat:\Hilb(C)^Q \to Q^{\rJ C}$ pointwise.

\begin{defn} Given a multiset  $T$ of functions $Q \to \bbN$,  we let $\cN_T$ denote the subspace of all $x \in \Hilb(C)^Q$ such that $\type(x) = T$.    \end{defn}

We have that $\cN_T$ is homeomorphic to $\Conf(C; T)$,  the space of configurations of $C$ with labels in the multiset $T$.  And $\cN_T \subseteq Q^{\rJ C}$ if and only if $T$ is a saturated type.

\subsection{Stratifications}
We now define a partial order on the set of combinatorial types, taking into account both the closure relations of $\cN_T$ and the poset structure of $\Hilb(C)^Q$.  Using this order we will stratify $\Hilb(C)^Q$.

\begin{defn} \label{partialorder} 

We write $\{h_1, \dots, h_s\} \leq \{g_1, \dots, g_l\}  $ if there is an injection $G: [s] \to [l]$ such that $g_{G(j)} \geq h_j$ for all $j$. 

We say that   $\{g_1, \dots, g_l\} \geq_{+}  \{h_1, \dots, h_s\}$  if there is a function $F:[l] \to [s]$ such that $  \sum_{i \in F^{-1}(j)} g_i \geq h_j$ for all $j \in [s]$.  ($F$ is necessarily surjective).

We write $\{g_1, \dots, g_l\} \lepe \{h_1, \dots, h_s\}$ if there is a function $F:[l] \to [s]$ such that $  \sum_{i \in F^{-1}(j)} g_i = h_j$ for all $j \in [s]$. 
\end{defn}

The set  $\displaystyle \bigcup_{S \leq_+  T} \cN_S$  is both closed and downward-closed in the poset structure on $\Hilb(C)^Q$.
More precisely, the closure of \(\cN_S\) is directly related to the order \(\lepe\):
\begin{prop} \label{sym-closure-stratification}
If $T \lepe S$ then \(\cN_T \subseteq \overline{\cN_S}\), otherwise \(\cN_T\) is disjoint from \(\overline{\cN_S}\).
\end{prop}
\begin{proof}
It suffices to check this for each coordinate in \(\Hilb^n(C) = \Sym^n(C)\), where it follows from the density of $\Conf^k(C)$ in $\Sym^k(C)$, and the latter being compact.
\end{proof}

There is a further variant of $\leq_+$, which will only be used in \S \ref{subsec:approximationcriteria}.
\begin{defn} \label{closure-downward-closure-order}
We write $\{g_1, \dots, g_l\} \lepd \{h_1, \dots, h_s\}$  if there is a function $F:[l] \to [s]$ such that $  \sum_{i \in F^{-1}(j)} g_i \leq h_j$ for all $j \in [s]$.
\end{defn}

 \begin{defn}
 Given a saturated combinatorial type $S$, we define $$\cS_S:= \displaystyle  \bigcup_{T,~ \sat(T) = S}  \cN_T,$$ where the union is over combinatorial types whose saturation is $S$.   
 \end{defn} 

We note that the subspace topology on $\cS_S$ agrees with the disjoint union topology, $\displaystyle \bigsqcup_{T,~ \sat(T) = S} \cN_T$.   (Because $|\sat(T)| = |T|$, if both $T_1, T_2$ saturate to $S$, no element of $\cN_{T_1}$ is in lies in the closure of $\cN_{T_2}$).

 We define  a new stratification of $\Hilb(C)^Q$  whose locally closed strata are given by $\cS_S$, where $S$ ranges over saturated types.  This stratification is a coarsening of the previous one, obtained by constructing a poset structure on the set of saturated types as follows. 
\begin{prop}\label{posetcollapse}
	There exists a coarsest poset structure 
	 $\leq_{+,\sat}$ on the set of saturated types such that the saturation function from the set of types to the set of saturated types is a morphism of posets  with respect to  
	 $\leq_+$ and $\leq_{+,\sat}$ respectively.
\end{prop}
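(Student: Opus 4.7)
The plan is to define $\leq_{+,\sat}$ as the transitive closure, on the set of saturated types, of the pushforward relation $R = \{(\sat(T), \sat(T')) : T \leq_+ T'\}$. Reflexivity (every saturated type is a type, so $T = T' = S$ works), transitivity (by construction), and the coarsest property (any partial order on saturated types for which $\sat$ is a morphism must contain $R$ and hence its transitive closure) are immediate. The one substantive task is antisymmetry, which is not obvious a priori because $\sat$ identifies distinct types, so the induced relation on saturated types is naturally only a preorder and in general requires a quotient step to become a partial order.

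My antisymmetry strategy uses only the support size $|\supp|$ as an invariant. The function $F$ in the definition of $\leq_+$ is surjective, so $T \leq_+ T'$ forces $|T| \leq |T'|$; and $\sat$ acts pointwise on $C$ (the meet-closure of $\{D_q\}_{q \in Q}$ is computed independently at each $c \in C$) and is inflationary, so it preserves $|\supp|$. Hence $|\supp|$ descends to a monotone non-decreasing invariant on $\leq_{+,\sat}$.

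Given a cycle $S_0 \leq_{+,\sat} S_1 \leq_{+,\sat} \cdots \leq_{+,\sat} S_m = S_0$ decomposed into elementary steps, constancy of $|\supp|$ along the cycle forces each step $T_i \leq_+ T_i'$ to satisfy $|T_i| = |T_i'| = n$. This makes $F$ a bijection and reduces $\leq_+$ to pointwise inequalities of the constituent multiplicity functions in $\tilde Q$; applying $\sat$ pointwise yields a bijection $\sigma_i \colon [n] \to [n]$ with chain dominations $f^{(i-1)}_j \leq f^{(i)}_{\sigma_i(j)}$. Composing around the cycle, $\tau := \sigma_m \circ \cdots \circ \sigma_1$ gives $f^{(0)}_j \leq f^{(1)}_{\sigma_1(j)} \leq \cdots \leq f^{(0)}_{\tau(j)}$. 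Since $\tau$ has finite order, iterating produces $f^{(0)}_j \leq f^{(0)}_{\tau^K(j)} = f^{(0)}_j$ for some $K$, forcing every inequality in the chain to be an equality; in particular $S_{i-1} = S_i$ as multisets for all $i$, and the cycle is trivial.

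The bulk of the work I expect will be unwinding definitions: confirming that $\sat$ is pointwise on $C$, translating between the multiplicity description in $\Hom(\tilde Q,\bbN)$ and the chain description in $\Ch(Q)$, and carefully tracking how the $\sigma_i$ compose across elementary steps. The main conceptual content is the finite-orbit argument, which turns a cycle of pointwise chain inequalities into equalities once the support-size constraint has collapsed each step of the cycle into a pointwise-dominating bijection.
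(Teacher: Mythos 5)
Your proof is correct and takes essentially the same approach as the paper: both define $\leq_{+,\sat}$ as the transitive closure of the pushforward of $\leq_+$ along $\sat$, and both establish antisymmetry by observing that the cardinality $|T|$ (equivalently $|\supp|$) is preserved by $\sat$ and monotone for $\leq_+$, so that in a cycle each elementary step becomes a pointwise-dominating bijection of multisets, and composing these permutations and invoking finite order forces all the pointwise inequalities to be equalities. The only cosmetic differences are that you phrase the argument as a single closed cycle with composite permutation $\tau$ whereas the paper splits into two chains $S \preceq \cdots \preceq S'$ and $S' \preceq \cdots \preceq S$ yielding $\tau$ and $\tau'$ with $\tau'\tau$ of finite order, and you work in the chain description $\Ch(Q)$ while the paper works in $\Hom(\tilde Q, \bbN)$; neither affects the substance.
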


Motivated by \cref{posetcollapse}, for any saturated type $S$ we define $$\cZ_S := \bigcup_{T \text{ saturated},~ T \leq_{+,\sat}  S} \cS_T \ =  \bigcup_{\tilde T, ~ \sat(\tilde T) \leq_{+,\sat} S} \cN_{\tilde T} .$$  It follows that $\cZ_S$ is both closed and downward closed in $\Hilb(C)^Q$,  because $$\{\tilde T ~|~ \sat(\tilde T) \leq_{+, \sat} S\}$$ is downward closed with respect to $\leq_+$.
 
In addition, we have that $\cZ_{S_1} \subseteq \cZ_{S_2}$ if and only if $S_1 \leq_{+, \sat} S_2$ and $$\cZ_S - \bigcup_{T <_{+,\sat} S} \cZ_T = \cS_S.$$

\begin{proof}[Proof of \cref{posetcollapse}]

We define $\leq_{+, \sat}$ 
 to be the transitive closure of the relation $$S_1 \preceq_{+} S_2 \text{ if there exist types $T_1, T_2$ with $T_1 \leq_+ T_2$ and $\sat(T_i) = S_i$}.$$  To prove that $\leq_{+, \sat}$ is a poset relation, it suffices to show antisymmetry.

Consider the function $|-|$ assigning to a type $T$ the number of elements of $T$.   This function is decreasing and we have that $|\sat(T)| = |T|$.   
  
   Thus given chains of elements $S \preceq_+ \dots \preceq_+ S'$  and $S' \preceq_+ \dots \preceq_+ S$ it follows that $|S| = |S'|$.  So for each $S_1 \preceq_{+} S_2$ in the chain, there exist types $T_{1}, T_{2}$ with $\sat(T_i) = S_i$ and $T_1 \leq_+ T_2$.  We have $|T_1| = |T_2|$  so we have $T_1 = \{e_1, \dots, e_l\} $ and $T_2 = \{h_1, \dots, h_l\}$  and there is a permutation $\sigma \in S_l$ such that $e_{\sigma i} \leq h_i$ for all $i = 1, \dots l$.  Then $\sat(e_{\sigma i}) \leq \sat(h_i)$ for all $i = 1, \dots, l$.  Thus writing $S = \{g_1, \dots, g_l\}$  and $S' = \{g_1', \dots, g_l'\}$, we obtain  permutations such that $\tau$ and $\tau'$  such that $g_{\tau i} \leq g_i'$ and $g_{\tau 'i }' \leq g_{ i}'$ for all $i$, as elements of $\Hom(Q,\bbN)$.  Iterating,  we obtain that  $g_i' \geq g_{\tau i}\geq g_{(\tau'\tau)^n  i}$ for all $n \in \bbN$.  Since $\tau' \tau$ has finite order,  we obtain that $g_i' = g_{\tau i}$  for all $i = 1, \dots, l$.  So $S_1 = S_2$  as desired.  
\end{proof}

\subsection{Relative variant}  We may extend all of the definitions in this section to pairs of elements $$(w \leq x) \in (\Hilb(C)^Q \le \Hilb(C)^Q)$$ by observing that $(\Hilb(C)^Q \le \Hilb(C)^Q) = \Hilb(C)^{Q  \times I}$  where  $I$ is the poset consisting of two elements $\hat 0 \leq \hat 1$ and $Q \times I$ is the product poset.  
For convenience, we expand out these definitions.    The \emph{combinatorial type} of an element $w \leq x$ is  the multi-set  $$\type( w \leq x )=  \{ g^w_{c_1} \leq g^x_{c_1} , \dots,g^w_{c_k} \leq g^x_{c_k} \},$$ of pairs of elements of $\bbN^Q$, where $c_i$ ranges over the elements of $C$ for which $g^x_c$ or $g^w_c$ is nontrivial.   
We obtain a partial order  $\leq_+$  on the set of types,  and a stratification of $(\Hilb(C)^Q \le \Hilb(C)^Q)$ by subvarieties, with open strata $\cN_T$  the set of pairs $w \leq x$  with $\type(w \leq x)= T$.
And we obtain a notion saturated type, and poset structures $\leq_{+, \sat}$  on saturated types  such that $\cZ_S = \displaystyle \bigcup_{\sat(T) \leq_{+, \sat} S} \cN_T$  is both closed and downward closed, and satisfies $$\cS_S =  \cZ_S - \bigcup_{T <_{+,\sat} S} \cZ_{T}.$$

\subsection{Pointed variant}  In dealing with pointed maps from a pointed curve $C$, it is useful to work with a pointed version of combinatorial types where the label of the basepoint is specified.  To that end, we define \emph{pointed combinatorial type} $T$ to be  a pair consisting of $g_0 \in \bbN^{Q}$ and $T'$ an ordinary combinatorial type.   

When $C$ has a distinguished base point $*$ we may associate a pointed combinatorial type to an element $x \in \Hilb(C)^Q$ by taking $g_0 = x_*$ and taking $T$ to be the multiset of labels of points $C-*$.

In this setting, we define the stratum $\cN_T$ associated to to $T$ to be the set of all $x$ such that $\type(x) = T$.   In this case $\cN_T$ is homeomorphic to $\Conf(C-*, T')$.  There is a partial order $\leq_{+}$ on pointed combinatorial types, defined analogously to the partial order on combinatorial types.  We say that $(x_0,\{g_1, \dots, g_l\}) \geq_+ (h_0, h_1, \dots, h_s)$ if there is a map of pointed sets $F: \{0\} \cup [l] \to  \{0\} \cup [s]$ such that $h_i \leq \sum_{i \in F\inv(j)} g_i$.  There an associated poset relation $\leq_{+, \sat}$ and stratification with strata $\cS_T$.

\section{Stratifying bar constructions} 
\label{stratificationConvergence}
 In this section, we will use the stratification introduced in \cref{combinatorialtypes} to prove two important theorems about bar constructions associated to subposets of $\Hilb(C)^Q$.  Throughout, we let $C$ be an algebraic curve, possibly with a chosen basepoint.  We will treat the pointed and unpointed case simultaneously.  In the pointed case, all combinatorial types are assumed to be pointed. 

Let $W \subseteq \Hilb(C)^Q$ be a locally closed union of subsets $\cN_T$ where $T$ is a saturated combinatorial type of $\Hilb(C)^Q$.   Let $P \subseteq (W < \Hilb(C)^Q)$ be a closed and downward closed union of finitely many $\cS_T$ for $T$ a saturated combinatorial type of $W < \Hilb(C)^Q$, such that $P \to W$ is proper.   We say that $T$ is a type of $P$ if $\cS_T \subseteq P$. Let $\cB$ be a locally compact topological space and let $U \subseteq \cB \times W$ be an open subset.  We write $U \le \Hilb(C)^Q$ and $P_U \to U$ for the poset obtained by pulling back $P$ to $U$.  

Suppose we are given a space $E \to U$ with a stratification by  $U\le \Hilb(C)^Q$,  denoted by $Z \subseteq  E \times_U  (U \le \Hilb(C)^Q)$.  We assume this stratification  satisfies the additional condition that the inclusion $Z_{b,w < x} \supseteq Z_{b,w < \sat(x)}$ is an equality for all $b,w,x$.  We write $\rB(U, P, Z)$ for the bar construction with $r$-simplices $$\{(b,w) \in U, w < x_0 < \dots < x_r \in \Hilb(C)^Q ,  s \in Z_{(b,w<x_r)} ~|~ w < x_i \in P \}.$$

\subsection{Spectral sequence}  \label{stratifyingbarconstructions}  
First we will describe a spectral sequence associated to a stratification of the bar construction.  
 
If $T$ is a combinatorial type of $P$ so that $\cZ_T \subseteq P$, we can apply the bar construction to $\cZ_T$ to obtain a closed semi-simplicial subspace $\rB(U, \cZ_T, Z) \subseteq \rB(U,P,Z)$.  Thus we obtain a stratification of the geometric realization $|\rB(U,P,Z)|$ by closed subspaces. Note that an element of $|B(U,P,Z)|$ has a maximal closed stratum that it is contained in,  determined by $\type(x_r)$, where $x_0 < \dots < x_r$ is the chain associated to the element.   Hence this stratification satisfies the assumption of \ref{subsec:Gysin} used to describe the compactly supported cohomology spectral sequence.

Let $\cY_T$ be the locally closed stratum associated to $T$.   A point of $\cY_{T}$ is specified by the following data
\begin{itemize}
    \item an element $(b,w) \in U$,
    \item a point $t$ in the interior of $\Delta^r$
    \item a chain $w < x_0 <  \dots < x_r$ with  $w < x_i \in P$ for all $i$ and $w < x_r \in \cS_T$
    \item an element $s \in Z_{b,w<x_r}$
\end{itemize} 
The space $Z|_{U \times_W \cN_T}$ parameterizes the data of 
\begin{itemize}
    \item $(b,w) \in U$
    \item $(w< x) \in \cN_T$
    \item $s \in Z_{b,w < x}$.
\end{itemize}  There is a map $\cY_T \to Z|_{U \times_W \cN_T}$ taking $(b,w,t, x_\bdot,s)$ to $(b,w,\sat(x_r), s)$.  (The map is well defined because we have assumed that $Z_{b,w<\sat(x_r)} = Z_{b, w< \sat(x)}$.  It is continuous because if $w < x_r \in \cS_T$, then each $\sat(x_i) = \sat(x_r) \in \cN_T$.)

We have that
$\cY_T \to Z|_{U \times_W \cN_T}$ is a fiber bundle and is pulled back from $\cN_T$.  The fiber over $(b,w<x, s)$  only depends on $w<x \in \cN_T$ and is homeomorphic to $$\rN (w, \sat\inv(x)] - \rN (w, \sat\inv(x)).$$ Here $(w,\sat\inv(x)]$ denotes the subposet of $\Hilb(C)^Q$  consisting of elements $x'$ such that $w < x' \leq x$ and $\sat(x') \leq x$.   And $(w, \sat\inv(x)) \subseteq (w,\sat\inv(x)]$ denotes the subposet of $x'$ satisfying the additional condition that $\sat(x') < x$.

\begin{prop}
    The saturation operation $\sat: \Hilb(C)^Q \to \rQ^{\rJ C}$ induces an homotopy equivalence of pairs  $$(\rN(w, \sat\inv(x)]~,{\rN(w, \sat\inv(x))}) \simeq  (\rN((w, x] \cap \rQ^{\rJ C}), ~ \rN((w, x)\cap \rQ^{\rJ C})). $$
    Furthermore, the cohomology of this pair vanishes unless $w< x$ is essential.
\end{prop}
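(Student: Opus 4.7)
The plan is to establish the first assertion via an adjunction argument and the second by combining a product decomposition with the crosscut theorem from the previous section.

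For the equivalence of pairs, by construction $\sat\colon \Hilb(C)^Q \to Q^{\rJ C}$ is left adjoint to the inclusion $i\colon Q^{\rJ C} \hookrightarrow \Hilb(C)^Q$. I will verify that $\sat$ and $i$ restrict to adjoint poset homomorphisms between $(w, \sat\inv(x)]$ and $(w, x] \cap Q^{\rJ C}$: given $y \in (w, \sat\inv(x)]$, the element $\sat(y)$ is saturated with $w < y \leq \sat(y) \leq x$, so $\sat(y) \in (w, x] \cap Q^{\rJ C}$; conversely, any saturated $z$ with $w < z \leq x$ satisfies $\sat(z) = z \leq x$, so $z \in (w, \sat\inv(x)]$. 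Since an adjoint pair of poset homomorphisms always induces mutually inverse homotopy equivalences of nerves (via the unit $y \leq \sat(y)$ and the identity counit on saturated elements), and the same restriction works for the open subposets $(w, \sat\inv(x))$ and $(w, x) \cap Q^{\rJ C}$ using the strict conditions $\sat(y) < x$ and $y < x$, this yields the desired homotopy equivalence of pairs.

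For the vanishing, since $\rN((w, x] \cap Q^{\rJ C})$ is contractible ($x$ is its maximum element), it suffices to show that $\rN((w, x) \cap Q^{\rJ C})$ is contractible when $(w \leq x)$ is not essential. I will first reduce to the case $w \in Q^{\rJ C}$: both posets in question depend on $w$ only through $\sat(w)$, because any saturated $y > w$ automatically satisfies $y \geq \sat(w)$; the notion of essentiality of $(w \leq x)$ is likewise determined by $\sat(w)$. Assuming $w, x \in Q^{\rJ C}$, the closed interval decomposes as a product $[w, x] \cap Q^{\rJ C} \cong \prod_{c \in \supp(x)} [f_c^x, f_c^w]_{\Ch(Q)}$, with $w$ and $x$ corresponding to the two extreme corners. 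By the classical fact that the order complex of the proper part of a product of bounded posets is the join of the order complexes of the proper parts of the factors, one obtains $\rN((w, x) \cap Q^{\rJ C}) \simeq \ast_{c} \rN((f_c^x, f_c^w)_{\Ch(Q)})$.

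If $(w \leq x)$ is not essential, then at some point $c_0$ the pointwise pair in $\Ch(Q)$ is not essential, and by the crosscut theorem its essential subposet has a maximal element strictly below $f_{c_0}^w$, making $\rN((f_{c_0}^x, f_{c_0}^w)_{\Ch(Q)})$ contractible; the join with a contractible space is contractible, so $\rN((w, x) \cap Q^{\rJ C})$ is contractible and the vanishing of the relative cohomology follows. The main technical obstacle I expect is carefully aligning the direction conventions between $\Ch(Q)$ and $Q^{\rJ C}$ so that the crosscut theorem applies to each open interval factor with the correct top element, and justifying the join-of-proper-parts decomposition rigorously in the nerve framework used in the paper.
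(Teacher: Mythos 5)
Your proof of the first assertion is essentially the paper's: the paper's one-line proof invokes that $\sat$ is an adjoint of the inclusion, hence a retraction; you spell out the same retraction argument, and you also get the adjunction direction right (the paper's proof says ``right adjoint'' but its own definition earlier fixes $\sat$ as the \emph{left} adjoint to the inclusion).

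Where you genuinely diverge is on the ``vanishes unless essential'' clause. The paper's proof is silent on this part --- the retraction argument alone says nothing about essentiality --- so the authors presumably regard it as an immediate consequence of the crosscut proposition stated just above. You instead give an explicit derivation: identify $[w,x]\cap Q^{\rJ C}$ with a product of intervals $\prod_c [f_c^x, f_c^w]_{\Ch(Q)}$ (it should be over $c\in\supp(w<x)$ rather than $\supp(x)$, a harmless slip), apply the Walker--Quillen join decomposition of the proper part of a product, and then note that if the pair is inessential at some $c_0$ then the crosscut proposition gives a terminal object in the open interval at $c_0$, so that factor --- and hence the join --- is contractible. This is a correct and more detailed route to the claim than the paper records. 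Two points worth tightening if you write this up formally: (1) the reduction to $w$ saturated is not quite ``free'' --- replacing $w$ by $\sat(w)$ can enlarge the interval to a half-open one because $\sat(w)$ itself may enter --- but this is moot in the paper's context since $w$ already ranges over saturated strata; (2) when you invoke the crosscut proposition inside the open interval $(f_{c_0}^x, f_{c_0}^w)$, you should note that the retraction $\pi(f)=\bigvee\{f_s : f_{c_0}^x\prec f_s\le f\}$ from that proposition does restrict to the interval (since $\pi(f)\le f$ and $\pi(f)>f_{c_0}^x$), and that $\pi(f_{c_0}^w)$ is then the terminal object you want. Your self-identified worry about $\Ch(Q)$ vs.\ $Q^{\rJ C}$ order conventions is real but the argument survives once the conventions are pinned down.
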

\begin{proof}
    This follows because the saturation operation, being a right adjoint, provides a retract of the first pair onto the second.  
\end{proof}

The bundle of pairs over $\cN_T$, whose fiber over  $(w<x)$ is 
\[(\rN(w, \sat\inv(x)]~,{\rN(w, \sat\inv(x))}),\]
gives rise to a pushforward sheaf of chain complexes on $\cN_T$ whose stalks compute the cohomology of the pair.  We write $\mu(T)[1]$ for this complex of sheaves. 

\begin{remark} We choose this notation because the complex of sheaves $\mu(T)$ is a categorification of the Möbius function of the poset $P$.  More precisely, the Euler characteristic of the stalk of $\mu(T)$ at $w < x$ equals the Möbius function of the interval between $w$ and $x$: we have $\chi(\mu(T)_{w < x}) = \mu(w,x)$, see e.g.\ \cite[Proposition~1.2.6]{wachs}.
\end{remark}

By the discussion above, we have that $H^*_c(\cY_T) = H^*_c(Z|_{U \times_W \cN_T}~,~ \mu(T)[1])$, which vanishes unless $T$ is essential.   Consequently, we obtain the following description of the spectral sequence associated to the stratification of $\rB(U, P, Z)$.  

\begin{thm}\label{VassilievSSDescription}
    There is a spectral sequence converging to  $H^*_c(|\rB(U, P, Z)|)$ with total $E_1$ page $$E_{1}^{*,*} = \bigoplus_{T \text{ essential, saturated}} H^*_c(Z|_{U \times_W \cN_T}~,~ \mu(T)[1]).$$
    Here the sum is over all saturated essential combinatorial types of $P$. (Essential saturated types with $\cS_T \subseteq P$).
\end{thm}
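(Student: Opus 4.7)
The plan is to stratify the realization $|\rB(U,P,Z)|$ by the closed semi-simplicial subspaces $|\rB(U,\cZ_T,Z)|$ indexed by the finitely many saturated combinatorial types $T$ of $(W{<}\Hilb(C)^Q)$ appearing in $P$, and then to apply the poset-stratification spectral sequence recalled in Section~\ref{sec:review}. The containment $\cZ_{T_1}\subseteq \cZ_{T_2}$ holds iff $T_1 \leq_{+,\sat} T_2$, and the locally closed strata are precisely the $\cY_T$ already described, since $\cZ_T - \bigcup_{T'<_{+,\sat} T}\cZ_{T'} = \cS_T$. Choosing any order-preserving refinement of $\leq_{+,\sat}$ to $(\bbN,\leq)$ then produces a finite filtration by closed subspaces, whose associated spectral sequence has $E_1$ page $\bigoplus_T H^*_c(\cY_T)$ converging to $H^*_c(|\rB(U,P,Z)|)$; the properness of $P\to W$ together with local compactness of $\cB$ ensures that every space in sight is locally compact, so that the formalism of compactly supported sheaf cohomology applies.

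Next, I would identify $H^*_c(\cY_T)$ via the fiber bundle $\pi\colon\cY_T\to Z|_{U\times_W \cN_T}$ constructed in the exposition. Its fiber over $(b,w<x,s)$ is the open subset $|\rN(w,\sat\inv(x)]|-|\rN(w,\sat\inv(x))|$ of a finite simplicial complex, whose compactly supported cohomology is the relative cohomology of that pair. Pushing the constant sheaf forward along $\pi$ and feeding the result into the Leray spectral sequence yields
\[H^*_c(\cY_T) \;=\; H^*_c\bigl(Z|_{U\times_W\cN_T},\;\mu(T)[1]\bigr),\]
where $\mu(T)[1]$ is the complex of sheaves on $\cN_T$ whose stalk at $w<x$ is this relative cohomology. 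The shift $[1]$ is chosen so that the stalk Euler characteristic recovers $\mu(w,x)$ via Philip Hall's theorem, justifying the name.

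Finally, I would invoke the preceding proposition: the saturation map provides a homotopy equivalence of pairs between $(\rN(w,\sat\inv(x)],\rN(w,\sat\inv(x)))$ and the pair on $(w,x]\cap Q^{\rJ C}$, whose relative cohomology vanishes unless $w<x$ is essential. Hence $\mu(T)[1]$ is acyclic unless $T$ is essential, cutting the sum on the $E_1$ page down to essential saturated types and giving the stated formula. The step I expect to require the most care is the identification $R\pi_!\bbZ\simeq \mu(T)[1]$ as a complex of sheaves on $\cN_T$: one must check that the family of combinatorial intervals $(w,\sat\inv(x)]$ varies locally trivially over $\cN_T$, which follows from the stratification hypothesis $Z_{b,w<x}=Z_{b,w<\sat(x)}$ together with the description of $\cN_T$ as a labeled configuration space on which the combinatorial data is $S_{|T|}$-equivariant by construction.
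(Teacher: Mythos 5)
Your proposal is correct and follows essentially the same approach as the paper: stratify the realization by the closed semi-simplicial subspaces $|\rB(U,\cZ_T,Z)|$, identify the locally closed strata $\cY_T$ and the fiber bundle $\cY_T\to Z|_{U\times_W\cN_T}$ with fiber the open pair of nerves, identify the resulting compactly supported cohomology with $H^*_c(Z|_{U\times_W\cN_T},\mu(T)[1])$ via $R\pi_!\bbZ$, and invoke the proposition on the saturation retraction to discard non-essential types. The only minor slip is an attribution: the hypothesis $Z_{b,w<x}=Z_{b,w<\sat(x)}$ is what makes the map $\cY_T\to Z|_{U\times_W\cN_T}$ well-defined, while the local triviality of the combinatorial-interval bundle comes purely from $\cN_T$ being a labeled configuration space on which the interval $(w,\sat^{-1}(x)]$ depends only on the combinatorial type; your phrasing conflates the two, but this does not affect the argument.
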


To study pointed maps, we will use a slight strengthening \cref{VassilievSSDescription}, involving a generalization essential elements.

Let $R \subseteq W<\Hilb(C)^Q$ be a downwards closed union of strata $\cS_T$ for $T$ a saturated combinatorial type, which is initial in its upwards closure in the following sense:
\begin{itemize}
    \item if $w < y \in W<Q^{\rJ C}$ is in the upward closure of $R \cap W<Q^{\rJ C}$, then there exists $\eta_R(w < x) \in Q^{\rJ C}$ such that $w < \eta_R(w<y) \in R$ is the maximal element of $R \cap Q^{\rJ C}$ with $\eta_R(w<x) \leq y$.
\end{itemize}

\begin{ex} We may take the subposet $R \subseteq  W< \Hilb(C)^Q$ to be the collection of elements $w< x$ such that $w<\sat(x)$ is essential.  This is the main example of $R$, but we need slightly more flexibility to discuss spaces of pointed maps below.
\end{ex}

It follows formally that $R \cap w < Q^{\rJ C} \subseteq w < Q^{\rJ C}$ is closed under joins in $w < Q^{\rJ C}$, hence $R \cap w < \Hilb(C)^Q$ is closed under joins as well. 
(The join in $w< Q^{\rJ C}$ of two elements $w<x_1, w< x_2$ equals $w < \sat(x_1 \vee x_2)$).   For element $w<x$ in the upwards closure of $R$ with $x$ not necessarily saturated we extend the definition of $\eta_R$ by $\eta_R(w< x) := r(\sat(w<x))$.

We say that a saturated type $T$ is a \emph{type of $R$} or write $T \in R$ (resp.\ $P$) if $\cS_T \subseteq R$ (resp.\ $\cS_T \subseteq P$).   The same argument as in the case where $R$ consists of essential elements, immediately yields the following.

\begin{thm}\label{VassilievSSDescription2}
    If $P$ is contained in the upwards closure of $R$, there is a spectral sequence converging to  $H^*_c(|\rB(U, P, Z)|)$ with total $E_1$ page $$E_{1}^{*,*} = \bigoplus_{T \text{ type of $R \cap P$ }} H^*_c(Z|_{U \times_W \cN_T}~,~ \mu(T)[1]).$$
    Here the sum is over all (saturated) types of $R \cap P$.
\end{thm}

\subsection{Gysin Maps}

We assume that $\cB, U, E, P, R$ are as in the previous subsections, and that $P$ is contained in the upwards closure of $R$.  We also assume in  that $E$  is a finite dimensional real vector bundle over $U$ of constant rank. We have the following theorem on Gysin maps, when the subspace $Z \subseteq E \times_U P$ is linear.

\begin{thm}\label{GysinIso}
	Suppose that $Z_{b,w < x} \subseteq E_{b,w}$ is a linear subspace for all $(b,w) \in U, w< x \in P$.   Let $E' \subseteq E$ be an inclusion of vector bundles of constant rank over $U$, and let $Z'\to Z$ be a compatible map of stratifications by $P$.   Suppose that for every element $p \in P \cap R$  that $Z'_p \to Z_p \times_E E'$ is an isomorphism.   Then there is an induced Gysin isomorphism	
    $$H^{*- \dim(E/E')}_c( \rB(U, P, Z')) \to H^{*}_c( \rB(U, P,Z)).$$
\end{thm}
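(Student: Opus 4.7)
The plan is to use the Thom class $\theta \in H^{\dim(E/E')}(E, E - E')$ of the complex subbundle $E' \subseteq E$ to induce a map between the Vassiliev spectral sequences of \cref{VassilievSSDescription} associated to $\rB(U,P,Z)$ and $\rB(U,P,Z')$, and then to check that on the $E_1$ page this map is a stratum-wise Thom isomorphism.

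First, using the Fulton--MacPherson formalism recalled in \cref{sec:review}, the class $\theta$ pulled back along the canonical maps from the bar constructions to $E$ and $E'$ produces a Gysin map
$$\theta_!\colon H^{*-\dim(E/E')}_c\bigl(|\rB(U, P, Z')|\bigr) \to H^*_c\bigl(|\rB(U, P, Z)|\bigr).$$
The compatibility of $Z' \to Z$ with the $P$-stratifications ensures that the filtrations by combinatorial-type strata $\cY_T$ of \cref{stratifyingbarconstructions} are compatible, so $\theta_!$ is induced by a map between the associated spectral sequences.

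Next, I would check that the induced map on the $E_1$ terms, indexed by saturated essential combinatorial types $T$,
$$H^*_c\bigl(Z'|_{U \times_W \cN_T}, \mu(T)[1]\bigr) \to H^{*+\dim(E/E')}_c\bigl(Z|_{U \times_W \cN_T}, \mu(T)[1]\bigr),$$
is an isomorphism. The sheaf $\mu(T)$ depends only on the poset data of $P$ and is pulled back identically to both sides, so the induced map is computed fiberwise over $\cN_T$. By the hypothesis that $Z'_p \to Z_p \times_E E'$ is an isomorphism for essential $p \in \cN_T$, the bundle $Z'|_{U \times_W \cN_T}$ sits in $Z|_{U \times_W \cN_T}$ as the transverse preimage of $E'$, of codimension $\dim(E/E')$. \Cref{gysinPD} then identifies the resulting Gysin map with the Thom isomorphism of complex vector bundles, which is an isomorphism.

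The main obstacle is interpreting the pullback-square hypothesis on essential strata as producing a genuine codimension-$\dim(E/E')$ transverse inclusion of subbundles on $\cN_T$, so that the Thom class of $E' \subseteq E$ actually restricts to the Thom class of $Z'|_{U \times_W \cN_T} \subseteq Z|_{U \times_W \cN_T}$; this rests on the linear-algebraic fact that $Z_p$ and $E'|_{b,w}$ meet transversely in $E|_{b,w}$, which is built into the fact that both stratifications assign the same expected codimension along essential types. With this in hand, the isomorphism on $E_1$ pages propagates to an isomorphism on abutments, yielding the claimed Gysin isomorphism.
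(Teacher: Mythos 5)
Your construction of the Gysin map is not well-founded, and this is precisely the subtlety the paper's proof addresses by introducing an intermediate object. The Thom class $\theta \in H^{\dim(E/E')}(E, E-E')$ pulls back along $\rB(U,P,Z) \to E$ to a class in $H^*(\rB(U,P,Z),\, \rB(U,P,Z) - \rB(U,P,Z)\times_E E')$. The space $\rB(U,P,Z)\times_E E'$ is the bar construction on the pullback stratification $i^*Z$ defined by $(i^*Z)_p := Z_p \times_E E'$, so the class $f^*\theta$ is an orientation class for $\rB(U,P,i^*Z) \subseteq \rB(U,P,Z)$ and yields a Gysin map \emph{out of} $H^*_c(\rB(U,P,i^*Z))$, not out of $H^*_c(\rB(U,P,Z'))$. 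Your hypothesis only asserts $Z'_p = (i^*Z)_p$ for \emph{essential} $p$, so $Z'$ may be a proper closed subspace of $i^*Z$, and restricting $\theta$ does not produce a class in $H^*(\rB(U,P,Z),\, \rB(U,P,Z)-\rB(U,P,Z'))$ (the restriction of pairs goes the wrong way). Without such a class there is no induced map of filtered objects, hence no map of spectral sequences to analyze.

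The paper's proof fixes this by factoring $Z' \to i^*Z \to Z$ and treating the two arrows separately: the second gives a genuine Gysin map $H^{*-\dim(E/E')}_c(\rB(U,P,i^*Z)) \to H^*_c(\rB(U,P,Z))$ using $\theta$ on every stratum, while the first gives an ordinary restriction $H^*_c(\rB(U,P,i^*Z)) \to H^*_c(\rB(U,P,Z'))$. Both are then shown to be isomorphisms by \cref{VassilievSSDescription}, where only essential types contribute and your hypothesis ensures agreement there. The composite (inverting the restriction) is the asserted isomorphism. Your stratum-wise $E_1$-page comparison is the right computation, but it must be preceded by this factorization in order for there to be maps of spectral sequences at all.
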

\begin{proof}
	Write $i^*Z$ for the pullback representation of $P$ defined by $(i^*Z)_p = Z_p \times_{E} E'$ (the notation is justified by setting $i$ to be the inclusion $E' \subseteq E$).   There is a factorization $Z' \to i^*Z \to Z$.   The pullback square induces a Gysin map $$H^{*- \dim(E/E')}_c(\rB(U, P, i^*Z)) \to H^{*}_c( \rB(U, P,Z)),$$ and there is an ordinary pullback map $$H^{*}_c( \rB(U, P, i^*Z)) \to H^{*}_c( \rB(U, P,Z')).$$ 
    Both these maps are isomorphisms by stratifying as in \cref{stratifyingbarconstructions} and applying \cref{VassilievSSDescription2}. The functoriality of the spectral sequence under Gysin maps frollows from the discussion of \S\ref{subsec:Gysin}.
\end{proof}

\subsection{Dimensions in semialgebraic geometry}
For the next subsection, we need to invoke a notion of dimensions that behaves well with respect to closure and compactly supported cohomology.  Thus we will work with semialgebraic spaces.
 A \emph{semialgebraic subset} of $\bbR^n$ is a set that can be obtained as a finite boolean combination of sets cut out by finitely many polynomial equalities an inequalities.  We summarize several key results about dimensions of semialgebraic sets.  (All of these results hold in the more general setting of definable sets with respect to an o-minimal structure, replacing the adjective ``semialgebraic" by ``definable"). 

Recall from \cite[Chapter 8]{tametopology} that a \emph{semialgebraic triangulation} is a semialgebraic homeomeorphism to a \emph{complex} $J \subseteq \bbR^n$, which by definition is a finite disjoint union of relatively open linear simplices $\sigma_i \subseteq \bbR^n$ satisfying $\overline{\sigma_i} \cap  \overline \sigma_j$ is empty or equal to $\overline \tau$ for $\tau$ a common face of $\sigma_i,\sigma_j$ (not necessarily in $J$).  Given a complex, consider the set $Z_i:= \overline \sigma_i \cap J$ with the partial order induced by containment.  Then $Z_i$ is a stratification satisfying the condition that $x \in J$ is contained in $\sigma_\alpha$ if and only if $Z_\alpha$ is the minimum of all the $Z_i$ containing $x$.  This implies the stratification satisfies the maximality condition from \S\ref{subsec:Gysin} and also that $\sigma_\alpha = Z_\alpha - \cup_{ Z_\beta \subsetneq Z_\alpha} Z_\beta$.  Therefore, if $J$ is locally compact, we obtain a compactly supported cohomology spectral sequence $\bigoplus_{\alpha} H^\bdot_c(\sigma_\alpha) \implies H^\bdot_c(J)$, which we will use below.

\begin{thm}\label{ominsummary}
    Every semialgebraic set $X \subseteq \bbR^n$ has a numerical dimension $\dim X \in \bbN$ (invariant under semialgebraic bijections) satisfying the following properties.
    \begin{enumerate}
        \item The dimension of a semialgebraic open subset of $\bbR^d$ is $d$, and the dimension of a (nonempty) finite union is the maximum of the dimension of the pieces.
        \item   The closure of a semialgebraic set $X$ is semialgebraic and $\dim \overline X = \dim X$.
        \item The image of a semialgebraic set $X$ under a semialgebraic map $f$ is semialgebraic and $\dim f(X) \leq \dim(X)$
        \item If $f: X \to Y$ is a semialgebraic map with finite fibers, then $\dim X \leq \dim Y$.  More generally, if $f: X \to Y$ is a semialgebraic map, then $\dim X = \max_{d \in \bbN} \dim(S(d)) + d$, where $S(d) \subseteq Y$ is the locus where the fiber has dimension at most $d$. (The set $S(d)$ is automatically semialgebraic).

        \item If $\Theta \subseteq X$ is a semialgebraic subset and $X$, then there is a semialgebraic triangulation of $X$ that is compatible with $\Theta$ (in the sense that $\Theta$ is a union of simplices).

        \item  If $X$ is locally compact, then $H_c^i(X) = 0$ for $i > \dim X$. 
    \end{enumerate}
\end{thm}
\begin{proof}
(1) follows from the definition of dimension and \cite[Thm 4.1.3]{tametopology}, (2) follows from \cite[Thm 4.1.8]{tametopology},  (3) and (4) follow from \cite[Cor 4.1.6]{tametopology}, and (5) is \cite[Thm 8.2.9]{tametopology}. Finally to prove (6), choose a triangulation of $X$ by (5). The dimension of any simplex appearing is $\leq \dim X$ and so the compactly supported cohomology spectral sequence associated to the triangulation (discussed above) gives that $H^{i}_c(X) = 0$ for all $i > \dim X$.
\end{proof}

A \emph{semialgebraic space} $Y$ is a topological space, endowed with a finite open cover $U_i$ and homeomorphisms $h_i: U_i \to U_i' \subseteq \bbR^n$ where $U_i'$ is semialgebraic (though not necessarily open) and such that the subsets $U'_{ij}:=U_i' \cap h_i(U_j)$ are semialgebraic and the homeomorphisms $h_j h_{i}\inv : U'_{ij} \to U'_{ji}$ are semialgebraic.  This allows one to extend the definition of semialgebraic subsets and semialgebraic maps to $Y$.  A \emph{semialgebraic morphism} $f: X \to Y$  is a continuous map whose graph is semialgebraic.  We may extend the previous results to semialgebraic spaces by the following.

\begin{thm}[{\cite[Theorem 10.1.8]{tametopology}}]
    A regular $(T3)$ semialgebraic space $Y$ is semialgebraically isomorphic to a semialgebraic subset of $\bbR^n$.
\end{thm}

Recall that a subspace of a regular space is regular, so this condition is necessary.  Moreover a locally compact Hausdorff space is regular.

\subsection{Approximation criterion} \label{subsec:approximationcriteria}

We take the notation and assumptions on $U$, $\cB$, $E$, $Z$, $R$ from \S\ref{stratifyingbarconstructions}.  We further assume that $\cB$ is a locally compact Hausdorff semialgebraic space, that $U$ is a semialgebraic subspace, $E \to U$ is a semialgebraic real vector bundle of constant rank, and $Z \subseteq E \times_U (U \leq \Hilb(C)^Q)$ is semialgebraic when restricted to $\Hilb_{\leq N}(C)^Q$ for all $N$.  (It is possible to weaken the assumption that $E$ is a vector bundle, but for simplicity we will not do so.  As in the previous subsection we may generalize immediately from semialgebraic space to ones definable with respect to an o-minimal structure). 

We further assume that
\begin{itemize}
    \item the natural containment $Z_{b,w \leq x} \subseteq Z_{b, w \leq \sat(x)}$ is an equality;
    \item if $w \leq x_1$ and $w \leq x_2$ are two elements then $Z_{b, w \leq x_1} \cap Z_{b, w \leq x_2} = Z_{b, w \leq x_1 \vee x_2}$.  
\end{itemize}  

We suppose that we are given an \emph{expected codimension function} $\gamma: W \leq \Hilb(C)^Q \to \bbN$ that only depends on the combinatorial type.  We say that $Z_{b, w \leq x}$ is \emph{weakly unobstructed} if its codimension in $E_{b,w}$ is $\geq \gamma(w \leq x)$ and is \emph{unobstructed} if equality holds.  (In the examples of interest, $\gamma$ will be chosen so that weak unobstructedness implies unobstructedness).

 Using $\gamma$, we define a function $\kappa$ on relative combinatorial types by $$\kappa(T) := \gamma(T)  
 - \rank(T) - 2|\supp(T)|.$$  Here $\rank(T)$ is defined as in \cref{rankex} and $|\supp(T)|$ is defined as in \cref{suppex}. (Note that for these functions $f(w\leq x)$ only depends on the combinatorial type of $w\leq x$, so that $f(T)$  is well defined). Our convention in the pointed and relative case is that, if $T = (g_0 \geq h_0, \{g_1 \geq h_1, \dots, g_r \geq h_r\})$ then $|\supp(T)|$ is the cardinality of $\{i > 0 ~|~ g_i > h_i\}$.  
 
 Heuristically, $\kappa$ records the expected codimension of the homological contribution of the stratum $\cS_T$ to the compactly supported cohomology of bar construction. 
 In other words we expect $\cS_T$ to only affect compactly supported cohomology in degrees $\leq \dim(E) - \kappa(T)$.  The following criterion relating the compactly supported cohomology of the bar construction to the compactly supported cohomology of $\im(Z|_P) \to E$ substantiates this heuristic.  (In the statement $P \subseteq W< \Hilb(C)^Q$ is a poset with the properties assumed in the beginning of this section, and $R \subseteq W< \Hilb(C)^Q$ satisfying the properties assumed in \S\ref{stratifyingbarconstructions}).

\begin{thm}\label{BarApproximation}
 	Let $I \in \bbN$.   Suppose that the following conditions hold. 
		\begin{enumerate}
            \item $P \to W$ is proper, $P$ is closed and downward closed.
			\item For every pair $(b, w < x)$ in $P_U$ such that $x$ is saturated, and every  $y \in Q^{\rJ C}$ such that $x \prec_{\rm sat} y$ and $(w < y) \in R - P$, the fiber $Z_{b,w <y}$ is weakly unobstructed. (Here and throughout $x \prec_{\rm sat} y$ means  $x \leq y$ and there is no saturated element strictly between $x$ and $y$, given $x$ and $y$ saturated).
			\item $P$ is contained in the upwards closure of $R$, the support of $\eta_R(w<x)$ equals the support of $x$ for $(w< x) \in P$, and $P$ contains every $\cS_T$ such that $T$ is a type of $R$ with $\kappa(T) \leq I$.
		\end{enumerate}
Then the map induced on compactly supported cochains  by $f: B(P,Z) \to \im(Z|_P)$is connected in codimension $I+2$.    
(In other words, for all $i > \dim E - I - 2$ the map $H^i_c(B(P,Z)) \leftarrow H^i_c(\im(Z|_P))$ is an isomorphism and for $i = \dim E - I - 2$ it is a surjection).
 \end{thm}

\begin{remark}
    It is possible to prove \cref{BarApproximation} without the assumptions on semialgebraicity on $Z$, $\cB$, $E$, $U$, provided that $R = W< \Hilb(C)^Q$, and $Z \subseteq E$ is a linear subspace. (A previous version of this paper contained a statement without semialgebraicity hypotheses, but the proof was incomplete without this restriction on $R$).
\end{remark}

\begin{proof}
We write $\pi$ for the map $Z \to E$ (not necessarily restricted to $P$).  For every saturated type $T$ of $R$ we define  $J_T = \pi(Z|_{\cN_T})$.  Since it is an image of an intersection of semialgebraic sets, we have that $J_T$ is semialgebraic.

Let $$J_T^\circ:= J_T \setminus \bigcup_{T < S \text{ a type of $R$}} J_S.$$

If $T < S$ then $T \prec_\sat T' \le S$ for some $T'$ (and if $S \in R$ then so is $T'$), so 
\[J_T^\circ = J_T \setminus \bigcup_{T \prec_\sat T' \text{ a type of } R} J_{T'}.\]
Since 
there are only finitely many $T' \succ_\sat T$ for a given $T$, we have that $J_T^\circ$ is semialgebraic.  We put $K := \pi(Z|_P)$ and $\Theta = K - \bigcup_{T \in P \cap R} J_T^\circ$, where the union ranges over all saturated types of $P \cap R$.  The main facts about $\Theta$ and $J_T^\circ$ that we will use are contained in the following lemma.

\begin{lem}\label{increase} We have:

\begin{enumerate}
   \item  Let $T$ be a saturated combinatorial type of $R$.  Suppose $s \in Z_{b, w<x}$ for $x \in \cN_T$. If $(w,b,s) \notin J_T^\circ$  then there exists a saturated $x' \succ_{\rm sat} x$  such that $(w<x') \in R$ and $s \in Z_{b, w<x'}$.    Conversely, if $(b,w,s) \in J_T^\circ$, then $(w<x)$ is a maximum of all saturated elements of $y \in R \cap (w< Q^{\rJ C})$ satisfying $s \in Z_{b, w < y}$.
     
\item  If $(b,w,s) \in \Theta$ then  there exists a saturated  $(w<y) \in R - P$ such that $y' \prec_{\sat} y$ and $(w<y') \in P.$    Moreover, if $s \in Z_{b, w<x}$ for $(w<x) \in P \cap R$ saturated then we may choose $y$, $y'$ so that $y > y' \geq x$.  

\end{enumerate}
\end{lem}
\begin{proof}
For (1), suppose $(b,w,s) \notin J_T^{\circ}.$  Then there exists $T' > T$ a saturated type of $R$ such that $s \in Z_{b, w<y}$ for some $w<y$ of type $T'$.  So $s \in  Z_{b,w<y} \cap Z_{b,w<x}  = Z_{b, w<y \vee x}= Z_{b, w <\sat(y \vee x)}$.  Because $R \cap w<Q^{\rJ C}$ is closed under saturated meets, we have $\sat(y \vee x) \in R$ and $\sat(y \vee x) \geq x$ because $x$ is saturated.  Hence we may refine this chain to get $x'$ satisfying the desired condition.

Conversely, suppose $(b,w,s) \in J_T^\circ$ and $y \in R \cap (w< Q^{\rJ C})$ satisfies $s \in Z_{b, w < y}$.  Then as above we have $\sat(y \vee x) \in R$,  $\sat(y \vee x) \geq x$, and $s \in  Z_{b, w <\sat(y \vee x)}$. Hence $\sat(y \vee x) = x$ and so $y \vee x = x$ implies $y \leq x$.

For (2), suppose $(b,w,s) \in \Theta$. Then because $s \in K$  we have $s \in Z_{b, w<a}$ for some $w<a \in P$, so $s \in Z_{x_0}$ for $x_0 = \eta_R(\sat(a))$. Then because $x_0$ is saturated and in $R \cap P$, and we know by assumption that $s \notin J^\circ_{\type(x_0)}$, we know by part (1) that there is a saturated element $x_1 \succ_{\sat} x_0$ in $R$ such that $s \in Z_{b, w<x_1}$.  If $(w< x_1) \notin P$ then we are done (with $y = x_1, y' = x_0$), otherwise we may continue inductively to find the desired $(w<y)$.  If we are given a saturated $(w< x) \in P \cap R$ with $s \in Z_{b, w<x}$, we may repeat the same argument with $x_0 = x$. 
\end{proof}

 By \cref{ominsummary} (5), we choose a triangulation of $K$ that is compatible with $\Theta$ in the sense that $\Theta$ is a union of simplices. Note that $K$ is locally compact since $P$ and hence $Z|_P$ is closed and $\pi$ is proper. Then by the compactly supported cohomology spectral sequence associated to the triangulation it suffices to prove that $H^i_c(f\inv(\sigma)) \leftarrow  H^i_c(\sigma) $ is an isomorphism for $i > \dim E - I - 2$ and a surjection for $i = \dim E - I - 2$, for any simplex $\sigma$ of the triangulation.  By compatibility of the triangulation with $\Theta$ we have two cases: (I)  $\sigma \subseteq K - \Theta$ and (II) $\sigma \subseteq \Theta$.

In case (I), fix $(b,w,s) \in \sigma$. We will prove that  $f\inv((b,w,s))$ is contractible for all $(b,w,s) \in \sigma$ (this suffices by Leray).  We have that $(b,w,s) \in J_T^\circ$ for some $T$ a saturated type of $P \cap R$. By definition,  there exists $x_s \in \cN_T$ such that $s \in Z_{b, w < x_s}$, and by \cref{increase} we have that $x_s$ is a maximum of all saturated elements $x$ in $R \cap(w< Q^{\rJ C})$ satisfying $s \in Z_{b, w<x}$.   

Consider $\cV:= \{ w< x \in P ~|~ s \in Z_{b,w<x}\}$.  If $x \in \cV$ then $\eta_R(w<x) \leq x$ so $\eta_R(w<x) \in \cV$  and so $\eta_R(w<x) \leq x_s$ because $x_s$ is maximal.  Therefore $\cV \cap R$ has a maximal element $w < x_s$.  We further claim that $\cV$ is finite.  Because there are finitely many elements $\leq x_s$, it suffices to prove that $\eta_R \inv(y) \cap P$ is finite for all $y \in Q^{\rJ C}$.  This follows because $P$ contains finitely many combinatorial types,  we have assumed that $\eta_R$ preserves support, and for any $y$ the number of elements of a fixed type with the same support as $y$ is finite.

The fiber $f\inv(b,w,s)$ is the nerve of the finite poset $\cV$.  By the adjoint $\eta_R$ to the inclusion of $\cV \cap R \to \cV$, this nerve is equivalent to the nerve of $\cV \cap R$ which is contractible because $(w < x_s)$ is a maximal element.

For case (II), we suppose $\sigma \subseteq \Theta$. We will  prove that $H^i_c(f\inv(\sigma)) = H^i(\sigma) = 0$ for $i \geq \dim E - I - 2$.  We first show this for $f\inv(\sigma)$.
    We stratify $f\inv(\sigma)$, which is the geometric realization of the semi-simplicial space $\rB(U,P, \sigma \cap Z)$ defined by $$[r] \mapsto  \{(b,w) \in U, w  < x_0 < \dots < x_r \in P,  s \in \sigma \cap Z_{b,w<x_r}\},$$ using saturated types of $P$ as in  \S\ref{stratifyingbarconstructions}.  Given a saturated type $T$ of $P \cap R$, we let $Y_{T}$ denote the semi-simplicial space with $(Y_T)_r =$
    \[  \{(b,w) \in U, w < x_0 < \dots < x_r \in Q^{\rJ C} \cap P ,  s \in \sigma \cap Z_{b,w<x_r} ~|~  w< x_r \in \cN_T  \}. \]  From the spectral sequence \cref{VassilievSSDescription2}, associated to the stratification of $\rB(U,P, \sigma \cap Z)$ it suffices to establish that  $\dim (Y_{T})_r + r < \dim E - I - 2$. (Note that $(Y_T)_r$ is locally compact, because it is constructed in $\S\ref{stratifyingbarconstructions}$ as an open subset of a closed subset of locally compact spaces).  

    We write $\suc_R(P)$ for the set of saturated elements $w < y \in R - P$ such that there exists $y' \prec_{sat} y$  with $w < y' \in P$. (Note that $\suc_R(P)$ is semialgebraic, because $R, P$ and the relation $\leq$ are semialgebraic).     We let $Y_{T}'$ be the semi-simplicial space with $r$-simplices 
        \[\{(b,w) \in U, w < x_0 < \dots < x_r \in P \cap Q^{\rJ C},  y \in \suc_R(P),  s \in \sigma \cap Z_{b, w<y}~|~ x_r \in \cN_T, y > x\}.\]
    There is a map $(Y_T') \to (Y_T)$ given by forgetting $y$. By the second statement of \cref{increase}, this map is surjective. 
    We decompose  $(Y_{T})'_r $ according to type $T'$ of $w < y$, $(Y_{T}')_r = \cup_{T'}  (Y_{T,T'}')_r$.      Therefore by (2) and (1) of \cref{ominsummary}, we have that $ \dim ((Y_T)_r )\leq  \dim( (Y_{T} ')_r )\leq \max_{T' \in \suc_R(P)} \dim ((Y_{T,T'}')_r) $ 
        
    By assumption (2) we have that every $w<y \in \suc_R(P)$ is unobstructed.  Therefore the map taking an element of $(Y_{T,T'}')_r$ to $w < y \in \cN_{T'}$ has fibers given by a finite set times a set of dimension $\leq \gamma(T')$, hence the dimension of $(Y_{T,T'}')_r$ is less than or equal to $\dim E - \gamma(T') + 2|\supp(T')|$ by (4) of \cref{ominsummary}. And if $(Y_{T,T'}')_r \neq \emptyset$  we have $r \leq \rank(T') - 2$, which implies  $$\dim (Y_{T,T'})_r + r \leq \dim E - \kappa(T') - 2 < \dim E - I - 2 ,$$  where in the last step we have used that $T'$ is not a type of $P$ and hypothesis (3), which imply $\kappa(T') > I$.  Combining the inequalities we obtain the desired bound on $\dim ((Y_T)_r)$.
    
    Finally, a similar application of \cref{increase} shows that $\dim(\sigma) \leq \dim E - I - 2$ and so we are done.  
\end{proof}

\section{Semi-topological model and finite dimensional approximations} \label{sec:SemiTop}

            To compare algebraic maps $C \to {\rm Bl}_{p_1, \dots, p_r} \bbP^n$ with continuous ones, we pass through an intermediate model, which we call a semi-topological model.

\subsection{Continuous sections vanishing with multiplicity}

\begin{defn}
Let $U \subseteq C$ be an open subset and let $w: K \to \Sym^n U$ be a family of divisors of $C$, where $K$ is a compact set.  We say that a family of functions $f_k: U \to \bbC, k \in K$  \emph{vanishes to order $\geq w$} if
    \begin{itemize}
	\item for every $u \in U, k \in K$ there is an $a_{u,k} \in \bbC$ such that  $f_k = a_{u,k} \rho_{w(k)} + o(|\rho_{w(k)}|)$ at $u$ uniformly in $K$.   (Such an $a_{u,k}$ is necessarily unique)
\end{itemize}
If $a_{u,k} \neq 0$ for all $u,k$ we say that the family vanishes to order \emph{exactly $w$}.   Finally, if $f_k$ equals $q_k + g_k$ where $q_k$ is a continuous family of degree $n-1$ polynomials and $g_k$ vanishes to order $\geq w$, then we say that $f_k$ is a \emph{$w$-holomorphic family}.
\end{defn}

\begin{prop}\label{localrepresentability}
    Let $w:K \to \Sym^n(U)$ be given.  Then:
    \begin{enumerate}
        \item Multiplication by $\rho_{w(k)}$ induces a bijection   
    \[\{ g: K \times U \to \bbC \text{ continuous }\} \to \{f_k \text{ family vanishing to order $\geq w$} \}\] 
        \item Given a biholomorphism $h : U' \to U$, precomposition with $h$ induces a bijection \[\{f_k: U \to \bbC  \text{ vanishing to order $\geq w$}\}  \to  \{f'_k: U' \to \bbC  \text{ vanishing to order $\geq h^*(w)$} \}\]
    \end{enumerate}
\end{prop}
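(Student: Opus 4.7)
My plan is to verify both parts by writing down explicit inverses and checking continuity using the uniformity built into the definition of vanishing to order $\geq w$. The function $\rho_{w(k)}$ is a local (or polynomial) defining function for the divisor $w(k)$, so at any $u_0 \in U$ it vanishes to order exactly $m := \mathrm{mult}_{u_0}(w(k))$, and the coefficient $a_{u_0,k}$ is simply $\lim_{z \to u_0} f_k(z)/\rho_{w(k)}(z)$ when this limit exists.

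For part (1), the forward map is $\Phi(g)(k,u) := g(k,u)\,\rho_{w(k)}(u)$. To see $\Phi(g)$ vanishes to order $\geq w$, I would write near any $u_0$:
\[
\Phi(g)(k,u) = g(k,u_0)\,\rho_{w(k)}(u) + \bigl(g(k,u)-g(k,u_0)\bigr)\rho_{w(k)}(u),
\]
and observe that since $g$ is uniformly continuous on the compact set $K\times\overline{D}$ for $\overline{D}$ a small closed disk around $u_0$, the second summand is $o(|\rho_{w(k)}|)$ uniformly in $k$, and thus $a_{u_0,k} = g(k,u_0)$. The inverse $\Psi$ sends $f$ to $(k,u)\mapsto a_{u,k}$: this is well-defined by uniqueness of $a_{u,k}$, and it agrees with $f_k/\rho_{w(k)}$ away from the combined support of the divisors, which is dense. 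The main work is continuity of $\Psi$ at a point $(k_0, u_0)$ with $u_0$ in the support of $w(k_0)$. The idea is that the uniform $o$-condition gives, for each $\epsilon>0$, a neighborhood of $(k_0, u_0)$ on which $|f_k(z) - a_{u_0,k}\rho_{w(k)}(z)| \leq \epsilon |\rho_{w(k)}(z)|$ for $z$ on a small circle around $u_0$; choosing $z$ where $\rho_{w(k)}(z)$ does not vanish then identifies $a_{u_0,k}$ as a limit of the continuous function $f_k/\rho_{w(k)}$, and the uniformity implies this limit is continuous in $(k,u_0)$ jointly. I expect this continuity argument, and specifically propagating the uniform $o$-condition to a local-uniform estimate near varying points $u$ that may collide, to be the technical heart of the proof.

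For part (2), precomposition with the biholomorphism $h$ preserves multiplicities: at each $u' \in U'$, the function $\rho_{w(k)}\circ h$ vanishes to order $\mathrm{mult}_{h(u')}(w(k)) = \mathrm{mult}_{u'}(h^*w(k))$. Hence the ratio $H_k := (\rho_{w(k)}\circ h)/\rho_{h^*(w)(k)}$ is a holomorphic nonvanishing function on $U'$, varying continuously in $k$ (with an explicit formula as a product of holomorphic factors of the form $(h(z)-u_i)/(z-h^{-1}(u_i))$). If $f_k = a_{h(u'),k}\rho_{w(k)} + o(|\rho_{w(k)}|)$ near $h(u')$, then
\[
(f_k\circ h)(z) = a_{h(u'),k}\,H_k(u')\,\rho_{h^*(w)(k)}(z) + o(|\rho_{h^*(w)(k)}(z)|)
\]
near $u'$, with the $o$-control inherited from the original one (and from the fact that $h$ is bi-Lipschitz on compact subsets), uniformly in $k$. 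Thus $f_k\circ h$ vanishes to order $\geq h^*(w)$ with coefficient $a_{h(u'),k}H_k(u')$. The map is bijective with inverse precomposition by $h^{-1}$, concluding (2). In summary, the only nontrivial ingredient is the continuity of $\Psi$ in part (1); part (2) reduces to the chain rule for orders of vanishing and the observation that $\rho$ transforms by a nonvanishing holomorphic factor under biholomorphic change of coordinates.
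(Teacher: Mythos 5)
Your approach is essentially the same as the paper's: in part (1) the inverse is $(k,u)\mapsto a_{u,k}$ with continuity as the crux (the paper phrases this more compactly by observing that the vanishing condition with $w=0$ is precisely the definition of continuity of a function $K\times U\to\bbC$, so the bijection amounts to matching this criterion before and after multiplication by $\rho_{w(k)}$), and in part (2) both arguments hinge on the ratio $(\rho_{w(k)}\circ h)/\rho_{h^*w(k)}$ being a continuous family of nonvanishing holomorphic functions on $U'$. The only stylistic difference is that the paper deduces (2) from (1) by noting that multiplication by this ratio is a homeomorphism of $C^0(U',\bbC)\times K$, rather than re-verifying the vanishing estimate directly as you do.
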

\begin{proof}
    A function $K \times U \to \bbC$ denoted $g_k(u)$ is continuous if and only if for every $u \in U$ there exists a constant $a_{u,y}$ (which must equal $g_k(u)$) such that  $g_k = a_{u,k} + o(1)$  at $u$.  Hence if $g_k$ is continuous $\rho_{w(k)} g_k$ vanishes to order $\geq w$, and multiplication by $\rho_{w(k)}$ defines an injection from the RHS to the LHS. Conversely, if $f_k$ satisfies (*) then 
    \[g_k(u) := \begin{cases*}
        f_k(u)/\rho_{w(k)} & if $\rho_{w(k)} \ne 0$\\
        a_{u, k} & if $\rho_{w(k)} = 0$
        \end{cases*}
    \]
    is continuous by the same criterion.
   
        To obtain (2) from (1), we write $c_k = \rho_{w(k),U} \circ h$ and $b_k = \rho_{h^*w(k), U'}$ and use that   \[\Top(U', \bbC) \times K \xrightarrow{(c_k/b_k) \cdot} \Top(U', \bbC) \times K\] is a homeomorphism because $c_k/b_k$ does not vanish on $U'$.
\end{proof}

\begin{cor}
    Consider the space of pairs $$X:= \{(w \in \Sym^n(U), f: U \to \bbC \text{ vanishing to order }  \geq w \}.$$ There is a unique compactly generated topology on $X$ such that continuous maps $f: K \to X$ for $K$ compact correspond  to families of functions indexed by $K$ vanishing to order $\geq w$.  Furthermore, this topology is invariant under biholomorphisms $h: U \to U'$.
\end{cor}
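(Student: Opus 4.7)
The plan is to construct the topology on $X$ explicitly by transport of structure from a space to which Proposition~\ref{localrepresentability} directly applies, then verify both the universal property and the uniqueness.

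\textbf{Construction.} I would first use part (1) of Proposition~\ref{localrepresentability} to identify $X$ as a set with $Y := \Sym^n(U) \times C^0(U,\bbC)$ via the bijection $\Phi(w, g) := (w, \rho_w \cdot g)$. Give $Y$ its natural product topology, using the compact-open topology on $C^0(U,\bbC)$, and transport this along $\Phi$ to obtain a topology on $X$. Since $\Sym^n(U)$ is a finite-dimensional manifold and $C^0(U,\bbC)$ with the compact-open topology is metrizable (as $U$ is second countable), $Y$ is compactly generated, so the transported topology is compactly generated as well.

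\textbf{Universal property.} For $K$ compact, continuous maps $K \to X$ correspond under $\Phi$ to continuous maps $K \to Y$, i.e.\ to pairs consisting of a continuous $w : K \to \Sym^n(U)$ and a continuous $g : K \to C^0(U,\bbC)$. By the exponential law (valid because $K$ is compact and $U$ is locally compact Hausdorff), the second datum corresponds to a continuous function $g : K \times U \to \bbC$, which I write as $g_k(u)$. Proposition~\ref{localrepresentability}(1) applied to the family $w$ then identifies such continuous $g$ with families $f_k : U \to \bbC$ vanishing to order $\geq w$, via $f_k = \rho_{w(k)} \cdot g_k$. Composing, a continuous map $K \to X$ corresponds exactly to the data of a continuous $w : K \to \Sym^n(U)$ together with a family $(f_k)_{k \in K}$ of functions vanishing to order $\geq w$, which is the desired property.

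\textbf{Uniqueness.} A compactly generated topology on a set $X$ is determined by its collection of continuous maps from compact Hausdorff spaces: two such topologies with identical $\Hom(K,-)$ for all compact $K$ coincide, since each is the final topology with respect to this family. Any compactly generated topology on $X$ satisfying the stated property therefore has the same continuous maps from compact spaces as the one just constructed, and so agrees with it.

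\textbf{Invariance under biholomorphisms.} A biholomorphism $h : U' \to U$ induces a continuous map $h_* : \Sym^n(U') \to \Sym^n(U)$ and a bijection $X(U) \to X(U')$ by $(w, f) \mapsto (h^*w, f \circ h)$. Part (2) of Proposition~\ref{localrepresentability} shows that on each fiber over $w$ this bijection sends families vanishing to order $\geq w$ to families vanishing to order $\geq h^*w$, so it preserves the universal property characterizing the topology. By the uniqueness established above, this bijection is a homeomorphism. The only subtlety to watch is that the exponential correspondence requires $U$ to be locally compact Hausdorff (so that the compact-open topology is exponentiable against compact $K$), and that $\rho_{w(k)}$ depends continuously on $k$ in the compact-open topology on $C^0(U,\bbC)$; both are straightforward, since $\rho_w$ is a polynomial whose coefficients are elementary symmetric functions of $w$.
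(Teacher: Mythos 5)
Your proof is correct and takes essentially the same approach as the paper: both transport the product topology on $\Sym^n(U) \times C^0(U,\bbC)$ to $X$ via multiplication by $\rho_w$ and invoke Proposition~\ref{localrepresentability} for the universal property and biholomorphism invariance. You have merely spelled out the steps (exponential law, uniqueness of compactly generated topologies determined by maps from compact Hausdorff spaces) that the paper leaves implicit.
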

\begin{proof}
    Endow $X$ with the topology induced by the bijection $\Sym^n(U) \times \Top(U,\bbC) \to X$ given by multiplication by $\rho_w$, and use \cref{localrepresentability}. 
\end{proof}

We also have a variant of this corollary for $w$-holomorphic maps.
 
\begin{prop}
   There is a unique compactly generated topology on the set $$Y = \{(w \in \Sym^n(U), f: U \to \bbC \text{ $w$-holomorphic}) \}$$  such that continuous maps $f: K \to X$ for $K$ compact correspond  to $w$-holomorphic families of functions indexed by $K$.  This topology is invariant under biholomorphisms $h: U \to U'$.
\end{prop}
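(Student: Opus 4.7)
The plan is to follow the pattern of the preceding corollary: identify $Y$ with a product space via a canonical decomposition of each $w$-holomorphic function, transfer the product topology, and then verify both the universal property for compact families and invariance under biholomorphisms.

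First, I will show that every pair $(w, f) \in Y$ admits a \emph{unique} decomposition $f = q' + \rho_w h$ in which $q'$ is a polynomial of degree $< n$ and $h \in C^0(U,\bbC)$. Starting from any $w$-holomorphic presentation $f = q + g$, I write $q = q' + c\rho_w$ with $q'$ of degree $< n$ (possible since $\rho_w$ is monic of degree $n$) and factor $g = \rho_w \tilde g$ with $\tilde g$ continuous via the preceding corollary, so $h := c + \tilde g$ does the job. Uniqueness follows because any discrepancy yields an equation $\rho_w \cdot (\text{continuous}) = (\text{polynomial of degree } < n)$, forcing both sides to vanish as $\rho_w$ has degree exactly $n$. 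Writing $P_{<n}$ for the space of polynomials of degree $< n$, I then equip $Y$ with the topology induced by the bijection $P_{<n} \times \Sym^n(U) \times C^0(U,\bbC) \simeq Y$ given by $(q', w, h) \mapsto (w, q' + \rho_w h)$. Combining the universal property of the preceding corollary with the continuous functoriality of the decomposition, continuous maps $K \to Y$ from a compact $K$ correspond exactly to $w$-holomorphic families; uniqueness of the compactly generated topology is then automatic.

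For biholomorphism invariance, the candidate homeomorphism is $\Phi_h \colon Y_U \to Y_{U'}$, $(w, f) \mapsto (h^*w, f \circ h)$, and by the uniqueness just established it suffices to check that $\Phi_h$ and $\Phi_h^{-1}$ preserve compact families. For a $w$-holomorphic family $f_k = q_k + g_k$ on $U$, the remainder $g_k \circ h$ vanishes to order $\geq h^*w_k$ by the preceding proposition. The subtle point is that $q_k \circ h$ is holomorphic but no longer polynomial on $U'$; I will produce its polynomial part by Hermite interpolation against $h^*w_k$. Namely, let $q'_k$ be the unique polynomial of degree $< n$ on $U'$ whose Taylor data at each point of $\mathrm{supp}(h^*w_k)$ agrees with that of $q_k \circ h$ to the prescribed order. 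Then $q_k \circ h - q'_k$ is holomorphic and divisible by $\rho_{h^*w_k}$, hence vanishes to order $\geq h^*w_k$, and $\Phi_h(f_k)$ acquires an $h^*w$-holomorphic structure.

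The main obstacle is establishing \emph{continuity} of the Hermite interpolation $k \mapsto q'_k$, particularly as points of $w_k$ collide and multiplicities coalesce. For this I will use the Cauchy-integral representation
\[ q'_k(z) = \frac{1}{2\pi i}\oint_\gamma (q_k \circ h)(\zeta) \cdot \frac{\rho_{h^*w_k}(\zeta) - \rho_{h^*w_k}(z)}{(\zeta - z)\,\rho_{h^*w_k}(\zeta)}\,d\zeta, \]
where, using compactness of $K$, the contour $\gamma \subseteq U'$ is chosen to enclose $\mathrm{supp}(h^*w_k)$ for every $k \in K$ simultaneously. Joint continuity of the integrand in $(k, z)$ then yields a continuous family $(q'_k)$, so the remainder $g_k \circ h + (q_k \circ h - q'_k)$ is a continuous family vanishing to order $\geq h^*w_k$, completing the invariance argument.
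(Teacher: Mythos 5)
Your proof is correct and takes essentially the same route as the paper: identify $Y$ with a product by decomposing each $w$-holomorphic function into a polynomial interpolant plus a remainder vanishing to order $\ge w$, give $Y$ the product topology, and establish biholomorphism invariance by showing that the interpolant transfers and the remainder is coordinate-independent. The one place you go beyond the paper is in making the continuity of the transferred interpolant family $q_k'$ explicit via the Cauchy integral formula for Hermite interpolation; the paper asserts the existence of the continuous family $q_k'$ and observes that $q_k - q_k'$ vanishes to order $\ge w$ because it is a difference of holomorphic functions with matching Taylor data at $w(k)$, which is the same underlying reason.
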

\begin{proof}
    There is an bijection $Y \to X \times {\rm Poly}_n(\bbC)$, where $$X:= \{ (w \in \Sym^n(U), f: U \to \bbC \text{ vanishing to order }  \geq w) \},$$ because by definition any $w$-holomorphic $f$ can be written as a sum of a degree $n$ polynomial $q$ and a function $g$ vanishing to order $w$.  (The Taylor expansion of $f$ at the points of $w$ determines $q$ uniquely).  We give $Y$ the product topology.
    
    To prove invariance, suppose that $U$ has two holomorphic coordinates $z, z'$. Let $f_k$ be a family of $w$-holomorphic functions with respect to $z$, and $q_k,q_k'$ be the family of degree $n-1$ polynomials in $z,z'$ respectively whose Taylor expansion agrees with $f_k$ at $w(k)$.  By definition, we have that $f_k - q_k$ vanishes to order $\geq w$.  And $f_k -q_k' = f_k - q_k + (q_k - q_k')$.  Since $q_k - q_k'$ is the difference of two holomorphic functions with the same residues at $w(k)$ it vanishes to order $\geq w$, so $f_k - q_k'$ must as well.
\end{proof}

Now let $C$ be a Riemann surface, and let $w: K \to \Sym^n C$ be a family of divisors. For every $k_0 \in K$ there is 
\begin{itemize}
    \item  a neighborhood of $w(k_0)$ of the form $\prod_{i =1}^t \Sym^{n_i} U_i$ where the $U_i$ are disjoint open neighborhoods that are biholomorphic open subsets of $C$ and $n = n_1 + \dots + n_t$
    \item a neighborhood $V \ni k_0$ such that $w|_V$ factors through $\prod_{i =1}^t \Sym^{n_i} U_i$  as $\prod_{i = 1}^t w_i$
\end{itemize}  
We say that $f : K \times C \to \bbC$ is a family of functions vanishing to order $\geq w$ (resp. exactly $w$) if for every $k_0 \in K$ and every $i$ we have that $f|_{V \times U_i}$ vanishes to order $\geq w_i$ (resp. exactly $w_i$) for some (equivalently any) choice of $\prod_{i = 1}^{t}\Sym^{n_i}(U_i)$ and $V$ as above.

\begin{defn}\label{vanishingfamily} Let $M$ be a complex manifold and let $N \subseteq M$ be a complex submanifold.  Let $C$  be a Riemann surface, let $w: K \to \Sym^n C$ be a family of divisors, and let $f: K \times C \to M$ be a continuous map such that $f(k,p) \in N$ for every $p$ in the support of $w(k)$.  Then for every $k_0 \in K$  we have the following:
\begin{itemize}
    \item for every $p$ in the support of $w(k_0)$ there exist holomorphic local coordinates $s_1^p, \dots, s_{\dim M - \dim N}^p$ on a neighborhood $\cU_p \ni f_{k_0}(p)$, whose vanishing locus cuts out $N \cap \cU_p$
    \item there are disjoint closed balls $B_p \ni p$ contained in $f\inv(\cU_p)$ and an open neighborhood of $w(k_0)$ of the form $\prod_{p} \Sym^{n_p} U_p$ whose support is contained in the union of the balls $B_p$
    \item  there is a neighborhood $V$ of $k_0$ such that for every $k \in V$ we have that $f(B_p) \subseteq \cU_p$ and $w_V$ factors through $\prod_{p} \Sym^{n_p} U_p$ as $\prod_{p} w_p$
\end{itemize}  
We say that $f: K \times C \to M$ intersects $N$ to order $\geq w$ if for some (equivalently every) choice of $\cU_p, B_p, U_p, V$ as above we have that $s_i^p|_{V \times U_p}$ vanishes to order $\geq w_p$ for all $p, i$.   We say that it intersects \emph{to order exactly $w$} if $f_k\inv(N)$ equals the support of $w_k$ and for every $p$ there is at least one $i$ such that $s_i^p|_{V \times U_p}$ vanishes to order exactly $w_p$. 
\end{defn}

\begin{remark}
	Note that in the above definition, the holomorphicity conditions are only imposed on the coordinates whose vanishing defines $N$.
\end{remark}

\begin{defn}
    For a single function $f : U \to \bbC$ and $w \in \Sym^n(U)$, we can define each of the above notions by taking $K = \text{pt}$.
    If further $w = nu$ for some $u \in U$ then we will write $f$ vanishes to order $\ge n$ at $u$, $f$ vanishes to order exactly $n$ at $u$, and so on. 
\end{defn}

\begin{remark}
    Note that if $f$ vanishes to order $\ge w$ but does not vanish to order exactly $w$ (i.e.\ $a_{u, k}$ vanishes somewhere), it is not necessary that $f$ vanishes to order $\ge w'$ for some $w' > w$, even if $K = \text{pt}$ and $w = nc$ for some $c \in U$. There just might not be a polynomial that approximates $f$ to the required precision.
\end{remark}

\begin{prop}\label{twisted-section-universal-property-topology}
    Let $N_1, \dots, N_r \subseteq M$ be complex submanifolds of $M$.   Let $n_1, \dots, n_r \in \bbN$.  There is unique compactly generated topology on the set  
    \[T = \left\{(D_i)_{i  = 1}^r \in  \prod_{i= 1}^r \Sym^{n_i} C, f: C \to M~|~  f \text{ intersects $N_i$ to order $\geq D_i$} \forall i \right \}\] such that for all compact $K$ continuous maps $f: K \to T$ correspond to the data of a continuous map $f: K \times C \to M$ and maps $w_i: K \to \Sym^{n_i} C$ for every $i$ such that $f_k$ is a family of maps intersecting $N_i$ to order $\geq w_i$.
\end{prop}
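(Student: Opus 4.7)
The plan is to build the topology from a compatible atlas of local charts modelled on the spaces $X$ and $Y$ of the preceding propositions, after dispensing with uniqueness by a Yoneda-style argument. Uniqueness is immediate: the prescribed universal property determines the functor $K \mapsto \mathrm{Map}_{\mathrm{cts}}(K, T)$ on compact Hausdorff spaces $K$, and any two compactly generated topologies realizing the same such functor coincide.

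For existence, given a point $(D^0, f^0) \in T$ with $\supp(D^0_i) = \{p_{i,\alpha}\}$ of respective multiplicities $n^0_{i,\alpha}$, choose as in \cref{vanishingfamily} disjoint open neighborhoods $U_{i,\alpha} \subseteq C$ of the $p_{i,\alpha}$ that are biholomorphic to open subsets of $\bbC$, closed balls $B_{i,\alpha} \subseteq U_{i,\alpha}$ containing $p_{i,\alpha}$, and holomorphic coordinates $s^{i,\alpha}_1, \ldots, s^{i,\alpha}_{c_i}$ (where $c_i$ is the codimension of $N_i$ in $M$) on an open $\cU_{i,\alpha} \ni f^0(p_{i,\alpha})$ whose common vanishing locus cuts out $N_i$. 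Let $K_0 := C \setminus \bigsqcup_{i,\alpha} \mathrm{int}(B_{i,\alpha})$. The corresponding chart $W \subseteq T$ consists of all tuples $(D, f)$ with $\supp(D_i) \cap B_{i,\alpha}$ a divisor of degree $n^0_{i,\alpha}$ supported in $\mathrm{int}(B_{i,\alpha})$ and $f(B_{i,\alpha}) \subseteq \cU_{i,\alpha}$. Postcomposing $f|_{B_{i,\alpha}}$ with each coordinate $s^{i,\alpha}_j$ identifies $W$ tautologically with a fiber product of copies of the space $X$ of the preceding proposition (one for each triple $(i,\alpha,j)$, the copies for fixed $(i,\alpha)$ sharing the same divisor $D_i \cap B_{i,\alpha}$) together with a copy of $\mathrm{Map}_{\mathrm{cts}}(K_0, M)$, glued along boundary values on $\partial B_{i,\alpha}$. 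Give $W$ the corresponding topology, declare every such chart open, and pass to the compactly generated replacement.

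The key compatibility to check is that two charts $W, W'$ built from different choices of data induce the same topology on $W \cap W'$. Changes of $U_{i,\alpha}$ are absorbed by the biholomorphism invariance of $X$ established in the preceding propositions; changes of the coordinates $s^{i,\alpha}_j$ act by invertible matrices of holomorphic functions on $\cU_{i,\alpha}$, which are similarly handled. This chart compatibility is the main technical obstacle; granted it, the universal property follows directly from \cref{vanishingfamily}, since a family $((w_i), f)$ parameterized by a compact $K$ is, at each $k_0 \in K$, locally of the form handled by the earlier propositions, hence factors continuously through a chart on a neighborhood of $k_0$, and conversely any map from compact $K$ continuous in our topology is such a family.
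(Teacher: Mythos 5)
Your proposal is correct in spirit and is a close relative of the paper's argument, but it is organized quite differently, and one organizational difference matters.  The paper's proof (1) reduces to a single submanifold $N$ by taking a fiber product over the common map $f$, so that disjoint local balls always suffice; and (2) invokes the abstract extension lemma (\cref{abstractextension}), which is precisely designed so that the only compatibility you ever have to check is for an \emph{inclusion} $U \subseteq V$ of two basic opens, not for an arbitrary overlap.  You instead work with all $r$ submanifolds at once and build an explicit atlas of charts, and the compatibility you need is between two charts with independent choices of points, balls, and defining coordinates.  This is genuinely harder bookkeeping: for example, if $\supp D^0_i$ and $\supp D^0_j$ share a point, you cannot choose your $U_{i,\alpha}$ disjoint over all pairs $(i,\alpha)$, and two overlapping charts can differ in which balls appear, not just in how a fixed collection of balls is parametrized.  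The paper's reduction to $r=1$ followed by the lemma is exactly what sidesteps these issues.  Your identification of a chart with a fiber product of copies of $X$ and $\Map_{\rm cts}(K_0,M)$ is also not quite right as stated, since postcomposing $f|_{B_{i,\alpha}}$ with the defining coordinates $s^{i,\alpha}_j$ loses the tangential part of $f$; the constrained part of the chart is better described as sections of a trivialization in which only $c_i$ of the $\dim M$ coordinates carry a vanishing condition.  Your uniqueness argument (compactly generated topologies are determined by their maps out of compacta) is correct and is left implicit in the paper.  In short: same basic idea (local representability plus gluing), but the paper's route through $r=1$ and \cref{abstractextension} is what makes the gluing verification tractable, and your version leaves the most delicate part (overlap compatibility of non-nested charts, including divisor supports that merge across $i$) as an acknowledged gap.
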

\begin{proof}
    By taking fiber products, we reduce to the case where $r = 1$.  Then we may apply the following general statement about the relation between local representability and global representability \cref{abstractextension} in the case where  
    \begin{align*}X &= \{ w \in \Sym^n C, f: C \to M ~|~ h \text{ intersects $N$ to order $\geq w$ }\} \\
    &\subseteq \Sym^n C \times \Top(C,M),
    \end{align*}
    with $\tilde U$ the basis of open subsets associated to choices of  $w_0 \in \Sym^n C$ and  $(\cU_p, B_p, U_p)_{p \in {\supp(w_0)}}$ as in \cref{vanishingfamily}.   (Here the open subset $V$ of $X$ associated to such a datum is the set of all pairs $(w,f) \in X$ such that $w \in \prod_{p} \Sym^{n_p} U_p$ and $f(B_p) \subseteq U_p$.  And $\cF(K,U) \subseteq \Top(K,U)$ is the subsheaf of pairs $w: K \to \prod_{p}\Sym^{n_p} U_p$ and $f:K \times C \to M$ where $f$ is a $w$-holomorphic family).
\end{proof}
    
\begin{lem}\label{abstractextension}    
    Let $X$ be a topological space.  Suppose that $X$ has a basis of open subsets $\tilde \cU$ and for every $U \in \tilde \cU$ there is a subsheaf $\cF(-,U) \subseteq \Top(-,U)$ on the category of compact Hausdorff spaces (with Grothendieck topology given by finite collections jointly surjective maps)  such that for all $U \subseteq V \in \tilde \cU$ we have that $\cF(K,U) = \cF(K,V) \cap \Top(K,U)$. 

    Then there is a unique extension of $\cF(K,W)$ to all open subsets $W \subseteq X$ such that $\cF$ is a sheaf in $K$ and for all $W_1 \subseteq W_2$ we have $\cF(K,W_1) = \cF(K, W_1) \cap \Top(K,W_2)$.  Moreover, if $\cF(-,U)$ is representable by a compactly generated topology on $U$ for all $U \in \cU$ then $\cF(-,W)$ is representable by a compactly generated topology on $W$.
    
\end{lem}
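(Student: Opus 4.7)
The plan is to extend $\cF$ by sheafification along the basis $\cU$: for an arbitrary open $W \subseteq X$ and compact Hausdorff $K$, declare $f \in \cF(K,W)$ if and only if there is an open cover $\{K_\alpha\}$ of $K$ together with $U_\alpha \in \cU$ satisfying $U_\alpha \subseteq W$, $f(K_\alpha) \subseteq U_\alpha$, and $f|_{K_\alpha} \in \cF(K_\alpha, U_\alpha)$.  The sheaf condition in $K$ is then immediate: given a cover $\{K_\beta\}$ of $K$ and witness covers $\{K_{\beta,\gamma}\}$ for each $f|_{K_\beta}$, the collection $\{K_{\beta,\gamma}\}$ is a cover of $K$ witnessing $f \in \cF(K, W)$.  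The compatibility with restriction to a smaller open reduces to the hypothesized property on basis elements: given a witness cover $\{K_\alpha, U_\alpha\}$ for $W_2$ when $f(K) \subseteq W_1 \subseteq W_2$, one refines each $U_\alpha$ to a smaller basis open contained in $U_\alpha \cap W_1$ (which exists since $\cU$ is a basis) and transports membership in $\cF(K_\alpha, U_\alpha)$ using the given identity $\cF(K_\alpha, U) = \cF(K_\alpha, V) \cap C^0(K_\alpha, U)$ for $U \subseteq V$ in $\cU$.  That the extension recovers the original $\cF(K,U)$ for $U \in \cU$ then follows by the sheaf property of $\cF(-, U)$.

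Uniqueness is then forced: for any continuous $f : K \to W$, compactness of $K$ together with the basis property of $\cU$ yields a finite cover $\{K_\alpha, U_\alpha\}$ of the required form, and any extension satisfying the sheaf condition and the compatibility with restriction must assign to $f$ membership in $\cF(K,W)$ according to its values on basis opens, i.e.\ according to the formula above.

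For representability, assume each $\cF(-,U)$ is represented by a topology $\tau_U$ on $U$.  I define $\tau_W$ to be the topology on $W$ in which $S \subseteq W$ is closed if and only if $f^{-1}(S)$ is closed in $K$ for every compact Hausdorff $K$ and every $f \in \cF(K,W)$.  This is automatically compactly generated, and every $f \in \cF(K,W)$ is continuous by construction.  The remaining task is to show the converse: every continuous $f : K \to (W, \tau_W)$ with $K$ compact Hausdorff lies in $\cF(K,W)$.  As a preliminary step, each basis open $U \subseteq W$ is open in $\tau_W$, since for any $g \in \cF(K,W)$ with witness $\{K_\alpha, U_\alpha\}$ the set $g^{-1}(U) \cap K_\alpha = g|_{K_\alpha}^{-1}(U \cap U_\alpha)$ is open in $K_\alpha$ because $U \cap U_\alpha$ is open in the subspace topology from $X$ and $\tau_{U_\alpha}$ refines that topology.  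Compactness of $K$ then yields a finite cover of $K$ by sets of the form $f^{-1}(U_i)$ with $U_i \in \cU$, $U_i \subseteq W$.  The main obstacle is the final step: one must verify $f|_{f^{-1}(U_i)} \in \cF(f^{-1}(U_i), U_i)$, which reduces to checking that $\tau_{U_i}$ agrees with the compactly generated refinement of the subspace topology on $U_i$ inherited from $\tau_W$.  Both are characterized by the same class of probing maps from compact Hausdorff spaces, using the identity $\cF(-, U_i) = \cF(-, W) \cap C^0(-, U_i)$, which is an instance of the compatibility established in the first paragraph.
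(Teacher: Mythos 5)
The extension of $\cF$ via covers by basis elements and the uniqueness argument match the paper's approach and are correct. The gap is in the representability part.

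You define $\tau_W$ by the probing criterion ($S$ closed iff $g^{-1}(S)$ closed for all $g \in \cF(K,W)$), which makes every element of $\cF(K,W)$ continuous by fiat; the work is the converse, that every $\tau_W$-continuous map from a compact Hausdorff $K$ lies in $\cF(K,W)$. Your final step reduces this to showing that $\tau_{U_i}$ coincides with the compactly generated refinement of the subspace topology of $\tau_W$ on $U_i$, and you argue that both are characterized by the same probing maps via the identity $\cF(-,U_i) = \cF(-,W) \cap C^0(-,U_i)$. But that identity only tells you that maps in $\cF(K,W)$ landing in $U_i$ are in $\cF(K,U_i)$. To complete the probe comparison you need the converse inclusion: that a $\tau_W$-continuous map $K \to W$ landing in $U_i$ lies in $\cF(K,W)$. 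That is precisely the statement you are trying to prove, so the argument is circular. (A secondary point: $f^{-1}(U_i)$ is open in $K$, not compact, so $\cF(f^{-1}(U_i),U_i)$ is not defined; you should shrink to compact $K_i \subseteq f^{-1}(U_i)$ covering $K$.)

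The paper avoids this by taking $\tau_W$ to be the colimit (final) topology: $L$ is $\tau_W$-open iff $L \cap U$ is $\tau_U$-open for every basis $U \subseteq W$. This explicit relation between $\tau_W$ and the $\tau_U$'s lets one check directly that $f : K \to W$ is $\tau_W$-continuous iff $f|_{K'} \in \cF(K',U)$ for every compact $K' \subseteq f^{-1}(U)$, which by definition of the extension is the same as $f \in \cF(K,W)$. You can salvage your approach by proving first that your probing-criterion topology agrees with the colimit topology: one inclusion is the observation that every $g \in \cF(K,W)$ is continuous for the colimit topology; the other follows by noting that if $g^{-1}(S)$ is closed for all $g \in \cF(K,W)$, then in particular it is closed for all $h \in \cF(K,U) \subseteq \cF(K,W)$, so $S \cap U$ is $\tau_U$-closed. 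Once the two topologies are identified, the paper's direct argument applies and the circularity disappears.
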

\begin{proof}
   Let $f \in \Top(K,W)$.  Writing $W = \cup_{\alpha} U_\alpha$ for $U_\alpha \in \tilde \cU$ we have that there is a finite cover of $K$ by compact neighborhooods $K_1, \dots, K_N$ such that $f|_{K_i} \in \Top(K_i, U_{\alpha(i)})$ for some choice of $\alpha(i)$. (Take the open cover $f\inv(U_\alpha)$ of $K$ and find a subcover of open neighborhoods $V_{\alpha,\beta} \subseteq f\inv(U_\alpha)$ such that  $\overline V_{\alpha,\beta} \subseteq f\inv(U_\alpha)$.  By compactness, we may choose a finite subcover, $V_1, \dots, V_N$ and take $K_i = \overline V_i$).  We declare that $f \in \cF(K,W)$ if each $f|_{K_i}$ lies in $\cF(K_i, U_{\alpha(i)})$.   This definition is independent of the choice of covers: for any other choice of $K_j, U_{\alpha(j)}$ such that $f|_{K_j} \in \Top(K_j, U_{\alpha(j)})$ there is a common refinement by $K_i \cap K_j, U_{\alpha(i)} \cap U_{\alpha(j)}$ and the sheaf property implies that $f|_{K_j}$ lies in $\cF$ if and only if $f|_{K_i \cap K_j}$  does for all $j$.  By definition $\cF(-,W) \subseteq \Top(-,W)$ is the unique subsheaf on compact Hausdorff spaces such that $\cF(-,W) \cap \Top(-,W) = \cF(-,U)$ for all $U \in \tilde \cU$.
   Moreover, we have that $f \in \cF(K,W)$ if and only if for every compact $J \subseteq K$ such that $f|_J$ factors through $U \in  \tilde \cU$, the map $f|_J$ lies in $\cF(J,U)$.  

   To construct a topology on $W$ representing $\cF(-,W)$, we declare a subset $L \subseteq W$ to be open if and only if it is open for every intersection $U \cap L$ with $U \in \tilde \cU$.  Then $f: K \to W$ is continuous if and only if $f|_{f\inv(U)} : f\inv(U) \to U$ is continuous for all $U \in \tilde \cU$ if and only if $f|_K \in \cF(K,U)$ for all compact $K \subseteq f \inv(U)$.
 \end{proof}

\subsubsection{Case of interest}

We now consider the case where $M = V$ and $N_i = \ell_i \subseteq V$. 

Let $W_n \subseteq  \prod_{i = 1}^r \Sym^{n_i} C$ be the open subset of the product consisting of divisors with pairwise disjoint support.   Let $(D_i)_{i = 1}^r$ be the universal family of pairwise disjoint divisors, so that the fiber of $D_i$ above $w \in W_n$ is $w_i$.
Define $E^\cts = $
\[\{(w \in W_n, L \in \Pic^d C ,~ \text{section $C \to L \otimes V$ intersecting $L \otimes \ell_i$ to order  $\geq w_i$  $\forall i$})\},\] 
topologized as in \cref{twisted-section-universal-property-topology}, and let $E_{w, L}$ be its fiber over $(w, L) \in W_n \times \Pic^d C$.  
Similarly, let $E^\alg$ be the subset of $E^\cts$ consisting of algebraic sections, with fiber $E^\alg_{L, w}$ over $(L, w) \in \Pic^d \times W_n$.

We have ways of identifying $E^\cts$ with the space relative sections of a modified family of vector bundles, one using explicit transition functions, and one using algebraic geometry.  We first describe the construction using transition functions.

Fix $w' \in W_n$.  Let $W_n$ be a neighborhood of $w'$ of the form $\prod_{i =1}^t \Sym^{n_i} U_i$ where the $U_i$ are disjoint open neighborhoods of $C$ equipped with holomorphic local coordinates identifying them with open subsets of $\bbC$. Using these local coordinates, we have a continuous family of polynomials $\rho_{i} = \rho_{w_i}$ parameterized by $w \in W_n$. There is a vector bundle $V(-D)$ on $W_n \times C$, such that for every $w \in W_n$ the fiber of $V(-D)$ over $w$ is given by multiplying together the transition maps for $\ell_i \oplus \ell_i^\perp (-w_i)$ on $U_i$. If $\cL_d$ is the Poincar\'e bundle on $\Pic^d C$, we can further consider the vector bundle $\cL_d \otimes V(-D)$ on $U \times \Pic^d(C) \times C$.

Consider the projection map $\pi: W_n \times \Pic^d C \times C \to W_n \times \Pic^d C$.  We write $\Gamma(\pi, \cL \otimes V(-D))$ for the space of relative sections, consisting of pairs $(w, L) \in W_n \times \Pic^d C$ and $s \in \Gamma(C, L \otimes V(-D)|_w)$.

\begin{prop}\label{uglycompare}
The map from
$\Gamma( \pi, \cL \otimes V(-D))$ to \[\{(w \in W_n, L \in \Pic^d C, ~ \text{sections $C \to L \otimes V$ intersecting $L \otimes \ell_i$ to order $\geq w_i$ $\forall i$})\}\]
 given by multiplying by $\id_{\ell_i} \oplus \rho_i$ on each $U_i$ is a homeomorphism where the codomain is endowed with the above topology (and the domain is endowed with the compact generation of the compact-open topology).
\end{prop}
\begin{proof}
It suffices to show that for any compact $K$, and any choice of $w_{i}: K \to \Sym^{n_i}(U_i)$ the map   from $\Gamma(K \times C, V(-D))$ to \[ \{\text{families of sections $K \times C \to V$ intersecting $\ell_i$ to order  $\geq w_i$}\}\] given by multiplication by $\id_{\ell_i} \oplus \rho_{i, w_i(k)}$ is a bijection.

Given a vector bundle $E \to C$ with a trivialization $E \to V \times U$ on $U \subseteq C$, a subspace $V_0 \subseteq V$ and a divisor $A \in \Sym^k(B)$ where $B \subseteq U$ we may form the twist $E(-A, V_0)$ by gluing $E|_{W - B}$ to $(V_0 \oplus V_0^\perp(-A)) \times \overline{B}$, where $V_0^\perp$ is a complement of $V_0$ in $V$, keeping in mind that $V_0^\perp(-A)$ is trivial away from the support of $A$.

If we start with $\cL_d \otimes V$ and apply the above procedure iteratively with $V_0 = L \otimes \ell_i$, $A = w_i$, $B = W_{n,i}$ for each $i$ and $V_0 = V$, $A = w_0$ then we obtain the bundle $\cL_d \otimes V(-D)$. 
Following the correspondences of sections applied to families over compact $K$ establishes the claimed homeomorphism.
\end{proof}

Next we construct algebraic (or holomorphic) vector bundles $\cE_{L,w}$ such that there is a commutative diagram
\begin{center}
\begin{tikzcd}
{\Gamma^{\rm top}(C, \cE_{w,L})  } \arrow[r]           & E^\cts_{w L}           \\
{\Gamma^{\rm alg}(C, \cE_{w,L})  } \arrow[u] \arrow[r] & E^\alg_{w,L} \arrow[u]
\end{tikzcd}
\end{center}
 with horizontal arrows isomorphisms and vertical arrows inclusions.

Moreover, we construct $\cE_{w,L}$ so that as $w$ and $L$ vary, $\cE_{w,L}$ assemble into a bundle $\cE$ over $C \times \Pic^d C \times W_n$ and the horizontal isomorphisms in the diagram are induced by a morphism of bundles 
$\cE \to \cL_d \otimes V$.

We define the algebraic bundle $\cE$ and the morphism $\cE \to \cL_d \otimes V$ in terms of its sheaf of sections.  More precisely, the sheaf of sections (which we denote by $\tilde\cE$) is the fiber product of the following diagram of coherent sheaves on $C \times \Pic^d(C) \times W_n$, where $D_i \subseteq C \times W_n \times {\rm Pic}^d(C)$ denotes the  preimage of the universal subscheme from $C \times \Hilb_{n_i}(C)$:
\[\begin{tikzcd}
                                                 & \tilde \cL_d \otimes V \arrow[d]        \\
\prod_{i} \tilde \cL_d \otimes \ell_i |_{D_i} \arrow[r] & \prod_{i}  \tilde \cL_d \otimes V|_{D_i}
\end{tikzcd}\]
We claim that the fiber product $\tilde \cE$ is a locally free sheaf.  Indeed, $\tilde\cE$ is flat over $\Pic^d(C) \times W_n$ because each sheaf in the fiber product is and the vertical map is surjective.  So by an application of Nakayama's Lemma (see \stackscite{080Q}) it suffices to prove that $\tilde \cE_{w,L}$ is a vector bundle over $C$. But this holds because $C$ is a smooth curve, so every subsheaf of a locally free sheaf is locally free. 

We have a similar proposition to before.

\begin{prop}\label{vanishing-sections-vs-sections-of-twisted-bundle}
The map $\Gamma^\cts(\pi, \cE) \to \Gamma^\cts(\pi, \cL \otimes V)$ induces a homeomorphism $\Gamma^\cts(\pi, \cE) \to E^\cts$.
In particular, $E^\cts \to W_n \times \Pic^d C$ is a locally trivial bundle of separable Banach spaces.
\end{prop}
\begin{proof}
First note that $\Gamma^\cts(\pi, \cE)$ is locally trivial: it suffices to note that for a contractible neighborhood $U$ of $W_n \times \Pic^d$, the projection $U \times C \to C$ is a homotopy equivalence, so $\cE|_{U \times C}$ pulls back from a topological bundle on $C$.
Each fiber $\Gamma^\cts(\pi, \cE)_{(w, L)} = \Gamma^\cts(C, \cE_{w, L})$ is the space of sections of a vector bundle over $C$. Since $C$ is compact, the compact-open topology on it is given by a norm which makes it a separable Banach space. Further, since a convergent sequence along with its limit forms a compact set, any first-countable space is compactly generated.  Similarly by Proposition \ref{uglycompare}, $E^\cts$ is a locally trivial bundle of Banach spaces.  So it suffices to prove the map is a fiberwise homeomorphism. 

Accordingly, we fix $(w,L) \in W_n \times \Pic^d(C)$, and show that $$j : \Gamma^\cts(C, \cE_{w,L}) \to \Gamma^\cts(C, L \otimes V)$$ is injective with image equal to the continuous sections of $L \otimes V$ that intersect $L \otimes \ell_i$ to order $\geq w_i$ for all $i$. (This suffices by definition of $E^\cts$ and \cref{twisted-section-universal-property-topology}).  Injectivity follows from the fact that $\cE_{w,L} \to L \otimes V$ is an isomorphism away from the  support of $\cup_{i} w_i$.

Recall that the definition $\cE_{w,L}$ characterizes its algebraic sections.  To determine the image of $j$, we use that a continuous section is in the image of a map induced by a morphism of vector bundles if and only if it is so locally.  Hence we may replace $C$ by Zariski open neighborhoods $U \subseteq C$ on which $L$ admits an algebraic trivialization by a nonvanishing section $\phi$.  Further restricting $U$, we can assume that $U$ meets exactly one divisor $w_{i_0}$ and let $\rho$ be an algebraic function vanishing to order exactly $w_{i_0} \cap U$.  Choosing a basis of $V$ as $e_1,\dots, e_{\dim V}$  so that $e_1$ spans $\ell_{i_0}$, we have that $\phi e_1, \dots, \phi e_{\dim V}$ are an algebraic trivialization of $V \otimes L |_U$ and $\phi e_1, \phi \rho e_2, \dots,  \phi\rho e_{\dim V}$ are an algebraic trivialization of $\cE_{w,L}$, and $j|_U$ is the obvious inclusion.  
We have that $s\in E^\cts_{w,L}$ if and only if for all such choices of $U$,  writing $s|_U = \sum_{\beta = 1}^{\dim V} f_\beta \phi e_\beta$ we have that $f_2, \dotsc, f_{\dim V}$ are all divisible by $\rho$.  But by the description of the inclusion $\cE_{w,L}|_U\to L \otimes V|_U$  this condition precisely characterizes the image of $j|_U$.
\end{proof}

\begin{defn}\label{topological_stratification_definition}
    For $(w \le x) \in (W_n \le \Hilb(C)^Q)$ let $Z_{w \le x}$ be the subspace of $E_w$ consisting of those sections $s$ such that for all $c \in C$:
    \begin{itemize}
        \item if the multiplicity of $w_{\ell_i}$ at $c$ is $k$ and the multiplicity of $x_{\ell_i}$ at $c$ is greater than $k$, then  $s$ intersects  $\ell_i$ to order $> k$ at $c$ (i.e.\ to order $\ge k$ but not exactly to order $k$);
        \item if the multiplicity of $x_0$ at $c$ is nonzero, then $s$ vanishes at $c$. \qedhere
    \end{itemize}
\end{defn}

\begin{prop}\label{vanishing-sections-stratification}
The incidence
\[Z^\cts = \{((w \le x) \in (W_n \le \Hilb(C)^Q), f: C \to V) ~|~ f \in Z_{w \le x}\}\]
is a closed subset of $(W_n \le \Hilb(C)^Q) \times_{W_n} E^\cts$ and therefore defines a stratification of $E^\cts$.
\end{prop}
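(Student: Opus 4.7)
The plan is to verify closedness sequentially, which is justified because the ambient space is first-countable: the base $(\cV \le \Hilb(C)^Q)$ embeds in a finite product of symmetric products of $C$ and copies of $\Hilb(C)$ (all metrizable), while each fiber of $E$ is a separable Banach space by \cref{remark:relative-sections-topology-banach-space}. So I fix a sequence $((w_n \le x_n), s_n) \in Z^\cts$ converging to $((w_0 \le x_0), s_0)$, and aim to show that both conditions of \cref{topological_stratification_definition} hold for the limit. Since these conditions are imposed pointwise in $c \in C$ and are non-trivial only at the finite set $\supp w_0 \cup \supp x_0$, it suffices to check them at each such $c_0$.

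Fix such a $c_0$ and an index $i$, and choose a disk $U \ni c_0$ with local coordinate such that $\bar U \cap (\supp w_0 \cup \supp x_0) = \{c_0\}$. Set $a_0 = \mult_{c_0} w_{0, \ell_i}$, $b_0 = \mult_{c_0} x_{0, \ell_i}$, $e_0 = \mult_{c_0} x_{0, 0}$. Convergence in the symmetric-product topology forces the supports of $w_{n, \ell_i}|_U$, $x_{n, \ell_i}|_U$, $x_{n, 0}|_U$ to concentrate near $c_0$ for $n$ large, with total multiplicities over $U$ equal to $a_0, b_0, e_0$ respectively. The central technical tool is \cref{vanishing-sections-vs-sections-of-twisted-bundle}: applied on $U$, it factors the $V/\ell_i$-component $\bar s_n$ of $s_n$ as $\rho_{w_{n, \ell_i}} \cdot \bar{\tilde s}_n$, where $\bar{\tilde s}_n$ is an ordinary continuous section, and the cited homeomorphism ensures $\bar{\tilde s}_n \to \bar{\tilde s}_0$ in the compact-open topology.

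For condition $(a)$, assuming $b_0 > a_0$, the identity
\[\sum_{c \in U}\bigl(\mult_c x_{n, \ell_i} - \mult_c w_{n, \ell_i}\bigr) = b_0 - a_0 > 0\]
produces a point $c_n \in U$ with $\mult_{c_n} x_{n, \ell_i} > \mult_{c_n} w_{n, \ell_i}$, and the hypothesis $(w_n \le x_n, s_n) \in Z^\cts$ forces $\bar{\tilde s}_n(c_n) = 0$. Because $c_n \in \supp x_{n, \ell_i}|_U$ and these supports concentrate near $c_0$, we obtain $c_n \to c_0$; continuity of evaluation then gives $\bar{\tilde s}_0(c_0) = 0$, which translates back to $\bar s_0$ vanishing to order $>a_0$ at $c_0$, i.e., $s_0$ intersects $\ell_i$ to order $>a_0$ at $c_0$. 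For condition $(b)$, assuming $e_0 > 0$, for $n$ large one can pick $c_n \in \supp x_{n, 0}|_U$; the hypothesis gives $s_n(c_n) = 0$, and compact-open convergence of both the $\ell_i$-component of $s_n$ (directly) and the $V/\ell_i$-component (through $\rho_{w_{n, \ell_i}} \bar{\tilde s}_n$) yields $s_0(c_0) = 0$.

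The main obstacle is the translation between the topology on $E$, defined through sections with prescribed vanishing orders, and a compact-open-type topology in which evaluation at a moving point $c_n \to c_0$ is continuous. The homeomorphism of \cref{vanishing-sections-vs-sections-of-twisted-bundle} accomplishes exactly this translation by absorbing the vanishing condition into a twisted bundle; without it, one would need to argue directly with Taylor coefficients and their stability under $w_n \to w_0$, which is substantially more delicate. Once this identification is available, the remaining steps---a counting argument to produce a bad point $c_n$ near $c_0$ and continuity of evaluation to extract the desired vanishing in the limit---are elementary.
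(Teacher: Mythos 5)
Your proof is correct and hinges on the same key input as the paper's argument: the homeomorphism of \cref{vanishing-sections-vs-sections-of-twisted-bundle}, which absorbs the prescribed vanishing orders into the twisted bundle $V(-D)$ so that the sections become ordinary continuous sections on which evaluation is a continuous operation. After that the two arguments diverge tactically. The paper's proof works directly at the level of the compactly generated topology: for any continuous family over a compact $K$ it packages the ``bad set'' $\supp(x_{k,i}-w_{k,i})$ as the single divisor $x_{k,i}-w_{k,i} \in \Sym^m U$, applies $\Sym^m$ of the twisted section to obtain a continuous map $K \to \Sym^m V$, and invokes closedness of $\Sym^m \ell_i \subseteq \Sym^m V$. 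This treats colliding points of the divisor uniformly, with no need to single out a distinguished bad point. Your proof instead reduces to sequential closedness, and at each point $c_0$ of the support extracts one bad point $c_n$ by a multiplicity count over a small disk, concluding via continuity of evaluation along $c_n \to c_0$; it is more explicit about the mechanism of failure, but the paper's route is tighter. One step of your set-up deserves a clarifying citation: to test closedness with sequences you need the total space $(\cV \le \Hilb(C)^Q) \times_{\cV} E$ to be first-countable, and metrizability of the base together with metrizability of \emph{each fiber} does not by itself give this. You also need the local triviality of $E$ established in \cref{vanishing-sections-is-locally-trivial}; once you cite that, the total space is locally a product of a metrizable base with a separable Banach space, hence metrizable, and the sequential argument is legitimate.
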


\begin{proof}
Given \cref{vanishing-sections-vs-sections-of-twisted-bundle}, we want to check that for any continuous choice of divisors $(w \le x) : K \to (W_n \le \Hilb(C)^Q)$ and sections $s : K \times C \to \cE$, the set of $k \in K$ for which $s_k$ takes values in $\ell_i$ on $\supp(x_{k, i} - w_{k, i})$ for each $i$ and vanishes on $\supp(x_0)$ is closed in $K$.
Fixing $i > 0$, we may assume that $x_{k, i} - w_{k, i} \in \Sym^m U$ and $s_k \in \cE|_{U \times U'}$ for some fixed $m$, and some fixed neighborhoods $U \subset C$ and $U' \subset W_n \times \Pic^d$ such that $\cE$ is trivial on $U \times U'$.
Then $s_k$ induces (a continuous choice of) a map $\Sym^m s_k : \Sym^m U \to \Sym^m V$ and therefore the evaluation $(\Sym^m s_k)(x_{k, i} - w_{k, i})$ is a continuous choice $K \to \Sym^m V$.
It suffices to note that $\Sym^m \ell_i \subset \Sym^m V$ is closed. The case of $x_0$ is similar.
\end{proof}

\subsection{Finite dimensional approximation}

As above, let $\cL_d$ be the Poincar\'e bundle on $\Pic^d C$. We defined $E = E^{\cts}$ to be the space parameterizing the data of $L \in \Pic^d C$, $w \in W_n$ and $s \in \Gamma^{\cts}(L \otimes V)$ such that at a multiplicity $k$ point of $w$ labeled by $i$, the section $s$ passes through $\ell_i$ to multiplicity $\ge k$. $E$ comes with a map to $W_n \times \Pic^d$ and for $L \in \Pic^d$ fixed we denote the fiber over $(w, L)$ by $E_{w, L}$.
We want to approximate this bundle of Banach spaces by vector bundles of finite rank, in the sense of \cref{open-intersection-dense-subspace-weak-equivalence} below.
To this end, we will need an elementary lemma, which says that if $\cF \to X$ is a finite rank vector bundle over a paracompact space, with an inclusion (over $X$) into a locally trivial bundle of Banach spaces $\cB \to X$ then the pair $(\cB, \cF)$ is trivializable over any contractible (open) subset.
We provide a proof instead of locating a reference.

\begin{lem} \label{trivialize-fd-inclusions}
Suppose $B$ is a Banach space, $F$ is a finite dimensional vector space, $s : X \to \Emb(F, B)$ is a continuous family of embeddings of $F$ in $B$ and $x_0 \in X$ is a chosen basepoint.
Then there is a neighborhood $U$ of $x_0$ and a continuous choice of automorphisms $t : U \to \Aut(B)$ such that $t(x) \cdot s(x) = s(x_0)$ for each $x \in U$.
If $X$ is additionally paracompact then $U$ can be taken to be any contractible neighborhood of $x_0$.
\end{lem}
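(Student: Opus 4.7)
The plan is to construct $t(x)$ as the inverse of an explicit family of operators $\phi_x$ that deforms the subspace $s(x_0)(F)$ into $s(x)(F)$ while fixing a chosen complement. First I would invoke the standard fact, a direct consequence of Hahn--Banach applied to a basis of $F$, that the finite-dimensional closed subspace $F_0 := s(x_0)(F) \subseteq B$ admits a closed complement $C$, so that $B = F_0 \oplus C$ as a topological direct sum with continuous projections $\pi_F : B \to F_0$ and $\pi_C : B \to C$.

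Next I would define a family of bounded operators $\phi_x : B \to B$ by
\[
\phi_x(b) \;:=\; s(x)\bigl(s(x_0)^{-1}(\pi_F(b))\bigr) \;+\; \pi_C(b),
\]
where $s(x_0)^{-1} : F_0 \to F$ is well defined since $s(x_0)$ is an embedding, and is bounded because $F$ and $F_0$ are finite-dimensional. The assignment $x \mapsto \phi_x$ is continuous in the operator norm topology since $s$ is continuous into $\mathcal{L}(F,B)$ and the other ingredients do not depend on $x$. By construction $\phi_x(s(x_0)(v)) = s(x)(v)$ for every $v \in F$, and $\phi_{x_0} = \id_B$.

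Finally, since $\Aut(B) \subseteq \mathrm{End}(B)$ is open (operators close to the identity are invertible via a Neumann series) and inversion is continuous there, I would choose a neighborhood $U$ of $x_0$ on which each $\phi_x$ is an automorphism and $x \mapsto \phi_x^{-1}$ is continuous. Setting $t(x) := \phi_x^{-1}$ gives $t(x) \cdot s(x)(v) = \phi_x^{-1}(s(x)(v)) = s(x_0)(v)$ for all $v \in F$, which is the required identity.

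There is no real obstacle in this argument: the only nontrivial ingredient beyond pure linear algebra and Neumann series is the complementability of finite-dimensional closed subspaces, which is immediate from Hahn--Banach. The plan comes down to choosing the right one-parameter family whose value at $x_0$ is the identity, which I do by interpolating via the splitting $B = F_0 \oplus C$.
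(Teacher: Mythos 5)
Your proof is correct, and it takes a slightly different route from the one in the paper, though both rely on the same two tools (complementability of $s(x_0)(F)$ via Hahn--Banach, and openness of $\Aut(B)$ in $\End(B)$). You build an operator $\phi_x$ with $\phi_x \cdot s(x_0) = s(x)$ by transporting $F_0 = s(x_0)(F)$ onto $s(x)(F)$ while holding the complement fixed, and then set $t(x) = \phi_x^{-1}$, invoking a Neumann series to invert. The paper instead chooses a continuous splitting $p : B \to F$ of $s(x_0)$ and first corrects in the finite-dimensional group: since $p \cdot s(x)$ is near $\id_F$, one gets $r(x) \in \GL(F)$ with $r(x)\cdot p \cdot s(x) = \id_F$ on a neighborhood, and then $t(x) := \id_B + (s(x_0) - s(x))\cdot r(x)\cdot p$ satisfies $t(x)\cdot s(x) = s(x_0)$ by a one-line computation, with invertibility of $t(x)$ checked afterward by the same openness argument. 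The practical difference is that the paper never needs to invert an operator on the infinite-dimensional $B$; the only inversion is of $p \cdot s(x) \in \End(F)$, and the final $t(x)$ is written down in closed form. Your version has the advantage of being more geometrically transparent (it visibly interpolates the subspace), at the cost of defining $t(x)$ implicitly as an inverse. Both are valid and of comparable length; the paper's presentation just front-loads the finite-dimensional linear algebra so the resulting $t$ needs no inversion.
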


\begin{proof}
Since $F$ is finite dimensional, by the Hahn--Banach theorem there is a continuous splitting $p : B \to F$ of $s(x_0)$, ie $p$ is continuous and $p\cdot s(x_0) = \id_F$.
Now $x \mapsto p \cdot s(x)$ is a continuous map $X \to \End(F)$ taking the value $\id_F$ at $x_0$. 
So there is some neighborhood $U_1$ of $x_0$ and a continuous map $r : U_1 \to \GL(F)$ such that $\rank(x) \cdot p \cdot s(x) = \id_F$ for each $x \in U_1$.
Now let $t(x) = \id_B + (s(x_0) - s(x)) \cdot \rank(x) \cdot p \in \End(B)$. Then $t(x_0) = \id_B$ and for each $x \in U_1$,
\[t(x)s(x) = s(x) + (s(x_0) - s(x)) \cdot \rank(x) \cdot p \cdot s(x)  = s(x) + s(x_0) - s(x) = s(x_0) .\]
Since $B$ is Banach, $\Aut(B)$ is open in $\End(B)$ so it suffices to take $U = t^{-1}(\Aut(B))$.

The paracompact case then follows from a standard argument using partitions of unity; see e.g., \cite[proof of Theorem~4.3]{husemoller}.
\end{proof}

The next proposition shows that the homology of finite-dimensional approximations converges to the homology of the space of all sections.   The special case $X = *$ is known as the Palais--Svarc lemma.

\begin{prop} \label{open-intersection-dense-subspace-weak-equivalence}
Let $B$ be a Banach space and $X$ be a locally contractible metric space (in particular paracompact). 
Suppose $\cF_n$ is a chain of subspaces ($\cF_n \subset \cF_{n + 1}$) of $X \times B$ such that:
\begin{itemize}
    \item Each $\cF_n \to X$ is a vector bundle of finite rank.
    \item The union $\bigcup \cF_n$ is fiberwise dense, i.e., for each $x \in X$, the union of fibers $\bigcup \cF_{n, x}$ is dense in $B$.
\end{itemize}
Let $G \subset X \times B$ be open. 
Then $G' := \bigcup G \cap \cF_n \to G$ is a weak homotopy equivalence. 
In fact, for each $s \ge 0$ (and any choice of basepoint in $G \cap \cF_1$):
\[\colim_n \pi_s(G \cap \cF_n) \cong \pi_s(G') \cong \pi_s(G).\]
\end{prop}

\begin{proof}
Using a hypercovering argument (see e.g., \cite{dugger2004topological}, as in the proof of \cref{weak-equivalence-via-hypercovering}), we may assume $X$ to be contractible.
We prove that for every pair of finite complexes $(Y, Y_0)$ and choice of map $f: (Y, Y_0) \to (G, G \cap \cF_n)$, there is some $m \ge n$ and a map $\tilde{f} : Y \to G \cap \cF_m$, homotopic to $f$ rel $Y_0$ and such that $\mathrm{pr}_1 \circ f = \mathrm{pr}_1 \circ \tilde{f} : Y \to X$. 

Since $Y$ is compact, let $\varepsilon > 0$ be such that the $5\varepsilon$ neighborhood of the image of $f$ is contained in $G$.
Let $f = (f_X, f_B)$.
Subdivide $(Y, Y_0)$ such that the image of each simplex of $Y$ has diameter at most $\varepsilon$ and each simplex of $Y$ not contained in $Y_0$ has at least one vertex not in $Y_0$.
For each vertex $v \in Y \setminus Y_0$, since $\bigcup \cF_{m, f_X(v)}$ is dense in $B$, for $m$ sufficiently large we can find
\[\tilde{f}_B(v) \in G_{f_X(v)} \cap B(f_B(v), \varepsilon) \cap \cF_{m, f_X(v)}.\]
Set $\tilde{f}_B$ on $Y_0$ to be equal to $f_B|_{Y_0}$.
Now using \cref{trivialize-fd-inclusions}, trivialize the pair $(X \times B, \cF_m)$.
Then the linear extension of $\tilde{f}_B$ to $Y$ and the linear homotopy with $f_B$ land in $G \cap \cF_m$: each simplex now has diameter at most $3\varepsilon$, so each point moves at most $5\varepsilon$ and therefore remains in $G$; since we've trivialized $\cF_m \hookrightarrow X \times B$, linear combinations of points in $\cF_{m, x_i} \subset B$ are in $\cF_{m, x} \subset B$ for any choice of $x_i, x \in X$.
It suffices to set $\tilde{f} = (f_X, \tilde{f}_B)$.

The above argument shows that for each $s$, the composition
\[\colim \pi_s(G \cap \cF_n) \to \pi_s(G') \to \pi_s(G)\]
is an isomorphism.
Thus it suffices to verify that $\colim \pi_s(G \cap \cF_n) \to \pi_s(G')$ is surjective, which follows from the same argument applied to maps $(Y, Y_0) \to (G', G \cap \cF_n)$.
\end{proof}

\subsection{Semialgebraic approximations}
\label{approximating-subbundles}

We want to apply \cref{BarApproximation} to $Z^\cts$, or rather to a chain of semialgebraic finite-rank approximations. To be more precise, we want to find a sequence \(\cF_k \subset E^{\cts}\) of finite-rank vector bundles over \(W_n \times \Pic^d C\) such that:
\begin{enumerate}
    \item \(E^\alg \subset \cF_1 \subseteq \cF_2 \subseteq \dotsb\).
    \item Each \(Z^\cts \cap \cF_k\) is semialgebraic in the following sense: for fixed \(i\), each \((w_*, L_*) \in W_n \times \Pic^d(C)\) has a (semialgebraic) atlas \(U_\alpha\), and local trivializations \(\cF_k|_U \cong U \times \bbC^{r_k}\), with semialgebraic transition functions, such that each \(Z^\cts \cap \cF_i|U_\alpha\) is semialgebraic.
    \item For a fixed \((w, L) \in W_n \times \Pic^d(C)\), the union \(\bigcup_k \cF_{k, (w, L)}\) of fibers at \((w,L)\) is dense in \(E^\cts_{(w, L)} \cong \Gamma(C, \cE_{d,n}) \).
\end{enumerate}
In the rest of this section we will construct such a sequence, mostly following  \cite[Section 5]{aumonier}, though the ideas can also be traced back to \cite{mostovoy2006stoneweierstrass}.

Fix an (algebraic) embedding of \(C\) in \(\bbC P^N\) for some large \(N\), and let \(M\) be the corresponding restriction of \(\cO(1)\), with ``coordinate'' sections \(z_0, \dots, z_N\).
Let \(\overline{M}\) denote the corresponding \emph{anti-holomorphic} line bundle (i.e.\ one obtained by complex conjugating the transition maps).
We get a semialgebraic trivialization \(\eta \from C \times \bbC \to M \otimes \overline{M}\) given by \(\sum_{i=0}^N z_i \otimes \bar{z}_i\) (and \(\eta^{-1}\) is a semialgebraic choice of Hermitian inner product on \(M\)).

For \(k \ge 0\), let \(\cF_k\) be the image of the composition
\[\phi_k : \Gamma^\hol(\cE \otimes M^{\otimes k}) \otimes \Gamma^{\antihol}(\overline{M}^{\otimes k}) \to \Gamma^\cts(\cE \otimes M^{\otimes k}) \otimes \Gamma^\cts(\overline{M}^{\otimes k}) \to \Gamma^\cts(\cE)\]
where \(\Gamma^{\antihol}\) denotes anti-holomorphic sections, the first map includes (anti-)holomorphic sections among continuous sections and the second map multiplies the sections pointwise and applies \((\eta^{-1})^{\otimes k}\).

\begin{lem}
    The map \(\phi_k\) is injective.
\end{lem}
\begin{proof}
    It suffices to prove that the map
    \[\Gamma^\hol(M) \otimes \Gamma^{\antihol}(\overline{M}) \to \Gamma^\cts(M) \otimes \Gamma^\cts(\overline{M}) \to \Gamma^\cts(M \otimes \overline{M})\]
    is injective. 
    Restricting to a holomorphic disk \(D\) in \(C\) (which we identify with the unit disk in \(\bbC\)), and denoting the set of holmorphic functions \(D \to \bbC\) by \(\cO(D)\), we want to show that \(\cO(D) \otimes \overline{\cO}(D) \to \cC^0(D, \bbC)\) is injective.
    Now it suffices to note that \(f \in \cO(D)\otimes\overline{\cO}(D)\) is determined by its image in \(\bbC[[z]] \otimes \bbC[[\bar{z}]] \hookrightarrow \bbC[[z, \bar{z}]]\), and the coefficients of this image are in turn determined by \(\{\partial^i \overline{\partial}^j f_{\cC^\infty}\}\), where \(f_{\cC^\infty}\) denotes the image of \(f\) in \(\cC^\infty(D, \bbC) \subset \cC^0(D, \bbC)\) and $\partial$, $\bar{\partial}$ are $\left.\frac{\partial}{\partial z}\right|_0$, $\left.\frac{\partial}{\partial \bar{z}}\right|_0$ respectively.
\end{proof}

We have that $\cF_0 = \im \phi_0 = E^\alg$, and $\im \phi_k \subseteq \im \phi_{k+1}$, because $\phi_{k}$ is the composite of $\phi_{k+1}$ with the morphism 
\begin{multline*}
\Gamma^{\hol}(\pi,\cE \otimes M^{\otimes k}) \otimes \Gamma^{\overline{\hol}}(\pi, \ \overline M^{\otimes k}) \\
\to \Gamma^{\hol}(\pi,\cE \otimes M^{\otimes k}) \otimes \Gamma^{\overline{\hol}}(\pi, \ \overline M^{\otimes k}) \otimes \Gamma^{\hol}(C, M) \otimes \Gamma^{\overline{\hol}}(C,\overline M)\\
\to \Gamma^{\hol}(\pi,\cE \otimes M^{\otimes k+1}) \otimes \Gamma^{\overline{\hol}}(\pi, \ \overline M^{\otimes k+1}),
\end{multline*}
defined by $s \otimes t \mapsto \sum_{i = 1}^N z_i s \otimes \overline z_i t$.
The fiberwise density of $\bigcup \cF_n$ in $\Gamma^\cts(\cE)$ is proved the same way as \cite[Lemma~7.2]{aumonier}, which is an application of a suitable generalized Stone--Weierstrass theorem.
Finally, $Z^\cts \cap \cF_k$ is semialgebraic because $Z^\cts \subset E^\cts$ is cut out by linear conditions on the value $s(c) \in \cE_{w, L, c}$ of $s \in E^\cts_{w, L}$ for varying $c \in C$.
When $s \in \cF_k$, in the coordinates $z_i$ and with a choice of an algebraic local trivialization for $\cE$, these values $s(c) \in \cE_{w, L, c} \cong V$ are rational functions in $z_i$ and $\overline{z}_i$.

    \section{Comparing algebraic maps and the semi-topological model}
    \label{sec:Semitopcompare}
           
            In this section, we specialize to our main case of interest:  $V$ is a finite dimensional vector space and $\ell_1, \dots, \ell_r$ are lines corresponding to points $p_1, \dots, p_r \in \bbP(V)$. We let $X = \Bl_{p_1, \dots, p_r} \bbP(V),$ and consider the space of algebraic maps from $C \to X,$ where $C$ is a fixed algebraic curve.  We fix a tuple $(d,n) = (d, n_1, \dots, n_r) \in \bbN^{r+1}$.

            \subsection{Unpointed comparison theorem} 
            
            Let $W_n \subseteq \prod_{i = 1}^r \Sym^{n_i}(C)$ be the open subset consisting of tuples of pairwise disjoint divisors.   We let $\tilde \cM_{d,n}$ be the space parameterizing the data of $L \in \Pic^d(C)$ and $w \in W_n$ and $s \in \Gamma(L \otimes V)$ a continuous section intersecting $\ell_i$ to order exactly $w_i$.  We will prove the following theorem comparing the space of algebraic maps with $\cM_{d,n} := \tilde \cM_{d,n}/\bbC^*$.  As described in \cref{generalstratification}, there is a stratification of the space of algebraic sections of the universal degree $d$ line bundle, $\Gamma_{\alg}(C, \cL \otimes V)$ by $\Hilb(C)^{Q_r}$.  For $y \in \Hilb(C)^{Q_r}$ we define the expected codimension $\gamma(y) := \sum_{i= 1}^r 2(\dim_{\bbC} V - 1) m_{\ell_i}(x) + 2\dim_\bbC(V) m_0(x)$.  As in \cref{combinatorialfunctions} have an associated definition of $\kappa(w < x)$.

            The following is our main criterion for establishing that the map from $\Alg_{d,n}(C,X) \to \cM_{d,n}$ induces isomorphisms on homology groups.  To state it, recall that a map $f: X \to Y$ is \emph{homology $I$-connected} if $H_i(f): H_i(X) \to H_{i}(Y)$ is an isomorphism for $i < I$ and a surjection for $i = I$.  
			 
			\begin{thm}\label{semitopcompare}
			 	  Let $I \in \bbN$. Suppose there is a poset $P \subseteq (W_n < \Hilb(C)^{Q_r})$ that is a closed and downward closed union of finitely many combinatorial types such that 
		          \begin{enumerate}
                        \item $P$ is proper over $W_n$ 
			             
			             \item  For every pair $w < x$ in $P$, and every $y \in (W_n \le Q^{\rJ C})$ such that $x \prec y$ and $y$ is essential the fiber $\Gamma_{{\rm alg}}(C, V \otimes L)_{y}$ is unobstructed, for every line bundle $L \in \Pic^d(C)$. 
                         \item  $P$ contains all types $T$ with $\kappa(T) \leq I$ and all the minimal types of $W_n < \Hilb(C)$.
                \end{enumerate}
                Then the map $\Alg_{d,n}(C, X) \to \cM_{d,n}$ is homology $I$-connected.  
			\end{thm}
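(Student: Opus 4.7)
The plan is to realize $\tilde\Alg_{d,n}(C,X)$ and $\tilde\cM_{d,n}$ (the $\bbC^*$-covers of $\Alg_{d,n}(C,X)$ and $\cM_{d,n}$) as open complements of discriminant loci inside section bundles over $U = W_n \times \Pic^d(C)$, and then compare the two discriminants using the bar-construction machinery of \cref{BarApproximation} and \cref{GysinIso}. Since the map is $\bbC^*$-equivariant, it suffices to prove homology $I$-connectedness for the covers. On the algebraic side, $\tilde\Alg_{d,n}(C,X)$ is open in a finite-rank complex vector bundle $E^\alg \to U$ whose fiber over $(w,L)$ parametrizes algebraic sections $s \in \Gamma_\alg(L \otimes V)$ with $s(w_i) \subset L \otimes \ell_i$, and the complement $\Delta^\alg$ is the image of the stratification $Z^\alg$ of \cref{generalstratification}. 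Under hypotheses (1)--(3), \cref{BarApproximation} yields that the augmentation $\rB(U, P, Z^\alg) \to \Delta^\alg$ is connected in codimension $I+2$ on compactly supported cochains.

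On the semi-topological side, \cref{vanishing-sections-vs-sections-of-twisted-bundle} and \cref{vanishing-sections-is-locally-trivial} exhibit $\tilde\cM_{d,n}$ as an open subset of a locally trivial Banach bundle $E^\cts \to U$, stratified by $Z^\cts$ as in \cref{topological_stratification_definition}. Here every $Z^\cts_{b, w \leq y}$ is automatically a closed linear subspace of the expected codimension $\gamma(w \leq y)$, since the Taylor-coefficient maps are surjective on continuous sections, so unobstructedness is free on this side. To apply the finite-dimensional \cref{BarApproximation}, I would use the chain $E^\alg \subset \cF_1 \subset \cF_2 \subset \dotsb \subset E^\cts$ of \cref{approximating-subbundles}, enlarging each $\cF_m$ if necessary so that it is transverse to the finitely many Banach strata $Z^\cts_p$ indexed by the combinatorial types appearing in $P$. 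Then \cref{BarApproximation} gives that the augmentation
\[\rB(U, P, Z^\cts \cap \cF_m) \longrightarrow \Delta^\cts_m := \im\bigl((Z^\cts \cap \cF_m)|_P \to \cF_m\bigr)\]
is connected in codimension $I+2$ inside $\cF_m$.

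The central step is the Gysin comparison of \cref{GysinIso} applied to the inclusion $E^\alg \subset \cF_m$: on every essential pair $p \in P$, both $Z^\alg_p$ and $(Z^\cts \cap \cF_m)_p$ have the same expected codimension, so $Z^\alg_p = (Z^\cts \cap \cF_m)_p \cap E^\alg$. This yields a Gysin isomorphism
\[H^{*-c_m}_c\bigl(\rB(U,P,Z^\alg)\bigr) \xrightarrow{\;\sim\;} H^*_c\bigl(\rB(U,P, Z^\cts \cap \cF_m)\bigr),\]
where $c_m = 2(\rank \cF_m - \rank E^\alg)$. Combining this with the two augmentation-connectedness statements, then invoking Poincar\'e--Lefschetz duality for the oriented total spaces $E^\alg$ and $\cF_m$ together with the long exact sequence of the open-closed decompositions $(\tilde\Alg_{d,n}, E^\alg, \Delta^\alg)$ and $(\tilde\cM_{d,n} \cap \cF_m, \cF_m, \Delta^\cts_m)$, produces an isomorphism $H_i(\tilde\Alg_{d,n}(C,X)) \iso H_i(\tilde\cM_{d,n} \cap \cF_m)$ for $i < I$ and a surjection at $i = I$. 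Finally, \cref{open-intersection-dense-subspace-weak-equivalence} identifies $\colim_m H_i(\tilde\cM_{d,n} \cap \cF_m)$ with $H_i(\tilde\cM_{d,n})$; combining and quotienting by $\bbC^*$ gives the theorem.

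The main obstacle is ensuring the $\cF_m$ can be chosen simultaneously transverse to all the Banach strata $Z^\cts_p$ for essential $p$ with $\cS_p \subset P$; this requires a density argument invoking openness of transversality inside the space of finite-rank subspaces of $E^\cts$, together with the finiteness of relevant combinatorial types guaranteed by hypothesis (3). The remainder is careful bookkeeping of real-dimension shifts through Gysin maps and Poincar\'e duality, verifying that the ``connected in codimension $I+2$'' statements on both sides translate into a homology $I$-connected map on complements after all the dualizations.
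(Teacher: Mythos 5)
Your outline follows the same strategy as the paper's proof: reduce to $\bbC^*$-covers, pass to finite-dimensional approximating bundles, run \cref{BarApproximation} on both sides, link them with the Gysin isomorphism of \cref{GysinIso}, and translate back through Poincar\'e duality, long exact sequences, and \cref{open-intersection-dense-subspace-weak-equivalence}. However, there are three places where the details diverge from what actually works.

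First, \cref{approximating-subbundles} only produces the chain $\cF_1 \subset \cF_2 \subset \dotsb$ over a distinguished \emph{local} neighborhood $U$ of each point of $W_n \times \Pic^d(C)$, not globally. So before you can invoke the finite-dimensional approximation you must reduce, via a Leray spectral sequence over $W_n \times \Pic^d(C)$, to proving $I$-connectedness over a basis of open subsets. Your proposal treats $U = W_n\times \Pic^d(C)$ as a single base and never makes this reduction; without it the $\cF_m$ simply are not available.

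Second, the transversality obstacle you flag is in fact spurious, and the genericity argument you propose to patch it is dubious: you would need $\cF_m$ to be transverse to the strata $Z^\cts_p$ uniformly over a (non-compact) family of $w \in W_n$, which a one-shot density-of-transversality argument will not give you. The paper avoids this entirely. Observe that for an essential pair $w<x$, hypothesis (2) forces $\mathrm{codim}(Z^\alg_{w<x} \subset E^\alg_w) = \gamma(w<x)$. Together with $Z^\alg_{w<x} = Z^\beta_{w<x} \cap E^\alg_w$ (eq.\ \eqref{essentialintersection}), the fact that codimension can only drop under intersection, and the fact that $\gamma(w<x)$ is an upper bound for $\mathrm{codim}(Z^\cts_{w<x} \subset E^\cts_w)$, a two-sided squeeze gives $\mathrm{codim}(Z^\beta_{w<x} \subset E^\beta_w) = \gamma(w<x)$ for free. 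No perturbation of $\cF_m$ is needed.

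Third, your justification for the key input to \cref{GysinIso}, namely $Z^\alg_p = (Z^\cts\cap\cF_m)_p \cap E^\alg$, is not quite right as stated: ``same expected codimension'' does not by itself give equality of the two subspaces. What is actually true is that for \emph{essential} pairs, the incidence conditions of \cref{topological_stratification_definition} become, on restriction to algebraic sections, exactly the algebraic incidence conditions (this uses the enumeration of essential pairs as in \cref{essential_example}). That inclusion plus the codimension count gives equality. You should verify \eqref{essentialintersection} this way rather than by codimension counting alone.
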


            \begin{proof}
            To establish \cref{semitopcompare}, we first make a series of reductions. (For brevity we write $\tilde \Alg_{d,n}$  instead of $\tilde \Alg_{d,n}(C,X)$.) 
                \begin{enumerate} 
                    \item   By the Leray spectral sequence, it suffices to prove that the map of principal $\bbC^*$ bundles from $\tilde \Alg_{d,n}(C, X)$ to $\tilde \cM_{d,n}$ is homology $I$-connected. 
                    
                    \item  Both $\tilde \cM_{d,n}$ and $\tilde \Alg_{d,n}$ map to $W_n \times \Pic^d(C)$.   Again by Leray, it suffices to prove, for basis of open subsets $U \subseteq W_n \times \Pic^d(C)$, that $\tilde \Alg_{d,n}|_U \to \tilde \cM_{d,n}|_U$ is homology $I$-connected.    
                    
                    \item   $\tilde \cM_{d,n}$ is an open subset of bundle of Banach spaces $E^{\rm top} \to W_n \times \Pic^d(C)$, where the fiber above $(w,L)$ is the space of sections of $L$ intersecting $\ell_i$ holomorphically to order $w_i$.   This bundle contains the space of all algebraic sections $E^{\rm alg}$  that intersect $\ell_i$ to order $\geq w_i$.    By the construction in \cref{approximating-subbundles} there is a basis of (semialgebraic) open subsets $U \subseteq W_n \times \Pic^d(C)$ such that on every $U$ there is a sequence of semialgebraic finite-dimensional sub-bundles $E^{\beta} = \cF_\beta \subseteq E^{\rm top}|_U$ containing $E^\alg|_U$ with $\cup_{\beta} E^{\beta}$ fiberwise dense.  By \cref{open-intersection-dense-subspace-weak-equivalence}, it suffices to prove that the map $$\tilde \Alg_{d,n}|_U \to \tilde \cM_{d,n}^{\beta}|_U := E^{\beta} \cap \tilde \cM_{d,n}|_U  $$ is homology $I$-connected all $\beta$ sufficiently large.
                    
                    \item  We have that $\tilde \Alg_{d,n}|_U  = E^{\rm alg} \cap \tilde \cM_{d,n}^\beta|_U .$  Hence by \cref{gysinPD}, the pushforward map   $$H_i(\tilde \Alg_{d,n}|_U)\to  \tilde H_i(\tilde \cM_{d,n}^\beta|_U)$$ is Poincar\'e dual to the Gysin map $$H^{\dim E^{\rm alg} - i}_c(\tilde \Alg_{d,n}|_U) \to  H^{\dim E^{\beta} - i}_c( \tilde \cM_{d,n}^\beta|_U)$$ induced by the Thom class for the inclusion of vector bundles $E^{\rm alg}|_U \subseteq E^{\beta}$.  So it suffices to prove that this Gysin map is an isomorphism for $i < I$ and surjection for $i = I$.

                    \item Let $K^{\rm alg} = E^{\rm alg}- \tilde \Alg_{d,n} |_U$  be the complement of the space of algebraic maps.  Similarly let $K^{\beta} := E^\beta - \cM_{d,n}^\beta$.  By the long exact sequence in compactly supported cohomology, it suffices to prove that the Gysin map $C^*_c(K^{\rm alg})[\dim E^{\rm alg}] \to  C^*_c(K^{\beta})[\dim E_{\beta}]$  is $(I+1)$-connected.  
                \end{enumerate}
                
            Now we are in a position to apply the results of the previous sections. 
            The stratification $Z^{\cts}$ of $E^{\cts}$ defined in \cref{topological_stratification_definition} restricts to a stratification of $E^\beta$ by $(W_n \le \Hilb(C)^{Q_r})$, denoted by $Z^\beta$.  As described in the running example~\ref{mainex1}, we have a corresponding stratification of the space of algebraic sections $\Gamma(C, \cL_d \otimes V)$ by $\Hilb(C)^{Q_r}$ inducing a stratification of $E^{\rm alg}|_U$ by $\Hilb(C)^{Q_r}$.  
            
            Restricting to $P_U$, we have a commutative diagram of bar constructions

            $$ \begin{tikzcd}
K^{\beta}                           & K^{\rm alg} \arrow[l]                           \\
{{\rm B}(U,P, Z^{\beta})} \arrow[u] & {{\rm B}(U,P, Z^{\rm alg})} \arrow[u] \arrow[l],
\end{tikzcd} $$
    where the vertical maps are surjections, because $P$ contains the minimal types of $W_n < \Hilb(C)^Q$.  This square induces a commutative diagram on compactly supported cochains
    $$
    \begin{tikzcd}
        C^*_c(K^{\beta})[\dim E^\beta]   \arrow[d]                     & C^*_c(K^{\rm alg}) [\dim E^{\rm alg}] \arrow[l]  \arrow[d]                         \\
        C^*_c({{\rm B}(U,P, Z^{\beta})}) [\dim E^\beta]  & C^*_c({{\rm B}(U,P, Z^{\rm alg})}) [\dim E^{\rm alg}] \arrow[l],
    \end{tikzcd}
    $$
  where the horizontal arrows are Gysin maps.   
  
  We obtain that the lower horizontal map is a quasi-isomorphism by applying \cref{GysinIso} with $E = E^\beta, Z = Z^\beta, E' = E^{\rm alg}$ and $Z' = Z^{\rm alg}$. To do so, we need to verify for $(w < x) \in (W_n < \Hilb(C)^Q)$ essential,
  \begin{equation}\label{essentialintersection}
      Z_{w<x}^\beta \cap E_w^\alg = Z_{w<x}^\alg.
  \end{equation} This follows from the \cref{topological_stratification_definition} of $Z^\beta$, and the description of essential pairs $w < x$ in our situation, as in \cref{essential_example}. 

  We prove that both of the vertical maps are $(I+2)$-connected by applying \cref{BarApproximation} twice (with $R$ equal to the set of $w < x$ such that $w < \sat(x)$ is essential and $P$ as given).    For the rightmost vertical map, the hypotheses of \cref{BarApproximation} follow from our assumptions on $P$. For the left vertical map, the unobstructedness hypothesis (2) follows from \eqref{essentialintersection}, the fact that codimensions can only decrease on intersection, and that $\gamma(w<x)$ is the maximal possible codimension of $Z_{w<x}^{\cts} \subseteq E_{w}^{\cts}$.  
  
  Therefore the upper horizontal map is  ($I+1$)-connected,  and so we are done.  \end{proof}

  \begin{remark}\label{remark:Whysemitop?}
In an earlier stage of this project, we believed that our methods would allow us to compare the fiber of $\tilde \Alg_{d,n}(C,X)$ over $(D_i)_{i = 1}^r \in W_n$ to a space of sections of a jet bundle;  namely the space consisting of pairs $L \in \Pic^d C$ and continuous non-vanishing sections $s \in \Gamma(\rJ( L \otimes V))$ satisfying the condition:
	\begin{itemize}
	\item	if $c \in C$ is a multiplicity $e$ point of $D_i$, then the jet $s(c)$ intersects $\ell_i$ to order exactly $e$ (in the sense that $s(c)$ is a Taylor expansion of such a function).  
	\end{itemize}	
However, this is not the case.  To illustrate the difference between the space of sections of the jet bundle and the model $\cM_{d,n}$ we use, let us specialize the case where $C = \bbC, V = L = \bbC$ and $D$ is the divisor of degree one supported at the origin.  (Here $r = 1$ and $\ell_1 = 0 \subseteq \bbC$).

Let $\cI \subseteq J^{1} \bbC$ be the incidence condition stating that a function $s$ vanishes only at the origin to order exactly $1$.  We have that $J^1 \bbC = \bbC^3$ and $$\cI = ((\bbC-0) \times (\bbC-0) \times \bbC )\cup ( 0 \times 0 \times \bbC - 0),$$  which is not open or closed (or even locally compact).  Furthermore $\Gamma(\bbC, \cI)$ consists of a pair of continuous functions $a: \bbC \to \bbC$ and $b: \bbC \to \bbC$ such that $a$ vanishes exactly at $0$ and $b$ is non-vanishing at $0$.  The data of $a$ and $b$ are independent, and $a$ is determined up to homotopy by its restriction to the unit circle, while $b$ is determined by its value at $0$:  so we have that $\Gamma(\bbC,\cI)  \simeq \Top(S^1, \bbC-0) \times (\bbC-0)$.

On the other hand, consider the space of functions $s: \bbC \to \bbC$ that intersect $0$ holomorphically to order exactly one at the origin.  By a sort of Alexander trick (see \eqref{Tayloragreement}),  this space retracts to the subspace of functions that agree with a nonzero $\bbC$-linear function on the unit disk. We may further retract to the subspace of nonzero $\bbC$-linear functions, identified with $\bbC-0$. 

The key difference between these two cases is that in the first there is no relationship between the ``formal Taylor expansion" of $s$ at $0$ and its actual values.  In contrast, the second space is equivalent to the space of pairs consisting of an injective linear function $z \mapsto bz$ and a function $f: S^1 \to \bbC-0$ and a homotopy between them:  via the map that takes $s$ to its Taylor coefficient at $0$ and its restriction to $S^1$ and the homotopy provided by its restriction to the unit disk.  

The situation does not improve when $C$ is compact. For instance, if $C = \bbP^1$ and $L = \cO(d)$ with $d$ large, the space of holomorphic sections with exactly one zero of order $1$ at $0 \in \bbP^1$ is empty whereas the relevant subspace of sections of the jet bundle is always non-empty.
\end{remark}

    \subsection{Pointed variant} \label{subsec:pointedvariant}
            We now consider the pointed case where $C$ and $X$ are equipped with distinguished base points $*_C \in C$ and $*_X \in X- \cup_{i = 1}^r E_i$.  We  take the same approach, but with a modified poset.  Accordingly let $$W_{n,*} :=  \left( \prod_{i = 1}^{r} \Sym^{n_i} C\right) \times *_C \subseteq  \left( \prod_{i = 1}^{r} \Sym^{n_i} C\right) \times C \subseteq \Hilb(C)^{Q_{r+1}}.$$ 

            Let $\overline R \subseteq \Hilb(C)^{Q_r}$ be the closed subposet consisting of divisors $D_{\ell_1}, \dotsc, D_{\ell_{r+1}}, D_0$ such that $D_{\ell_{r+1}} - D_{0}$ is supported at $*_C$ with multiplicity $\leq 1$.  We again have a stratification $\Gamma_{\alg}(C,\cL \otimes V)$ by $\Hilb(C)^{Q_{r+1}}$ restricting to a stratification by $\overline R$  The expected codimension of $y \in \Hilb(C)^{Q_{r+1}}$ is $\gamma(y):=  \sum_{i 1}^{r+1} 2(\dim_{\bbC} V - 1) m_{\ell_i}(y) + 2\dim_\bbC(V) m_0(y)$.
            
           We define $R_*: =  (W_{n,*} < \Hilb(C)^{Q_{r+1}}) ~ \cap ~ (W_{n,*} \le \overline R)$ and $R$ to be the subset of $R_*$ consisting of $(w \le x)$ such that $(w \le \sat(x))$ is \emph{essential}.  Then we have the following pointed variant of \cref{semitopcompare}.

             \begin{thm}\label{pointedsemitopcompare}
			 	  Let $I \in \bbN$. Suppose there is a poset $P \subseteq R$ that is a closed and downward closed union of finitely many combinatorial types such that 
		          \begin{enumerate}
			             \item  For every pair $w < x$ in $P$, and every $y \in  Q_{r+1}^{\rJ C}$ such that $x \prec y$ and $y$ is essential the fiber $\Gamma_{{\rm alg}}(C, V \otimes L)_{y}$ is unobstructed, for every line bundle $L \in \Pic^d(C)$. 
                         \item  $P$ contains all types $T$ of $R$ with $\kappa(T) \leq I$ and all the minimal types of $R$
                \end{enumerate}
                Then the map $\Alg_{d,n, *}(C, X) \to \cM_{d,n,*}$ is homology $I$-connected.  
			\end{thm}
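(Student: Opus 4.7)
The plan is to mirror the proof of \cref{semitopcompare}, with adjustments accommodating the basepoint. The pointedness of a map $f : (C, *_C) \to (X, *_X)$ translates to the condition that the corresponding section $s \in \Gamma(L \otimes V)$ lies in the line $\ell_{r+1} \subseteq V$ at $*_C$, where $\ell_{r+1}$ is the line through $*_X \in \bbP(V) - \{p_1, \dots, p_r\}$. The ambient setup encodes this exactly: $W_{n,*}$ fixes the $(r+1)$-th component of $\Hilb(C)^{Q_{r+1}}$ to be the degree-one divisor $*_C$, and $\overline R$ allows $D_{\ell_{r+1}}$ to differ from $D_0$ at $*_C$ only in multiplicity at most one, reflecting the first-order deformations that land the jet at $*_C$ inside $\ell_{r+1}$.

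With this in place, I would run through the same chain of reductions used for \cref{semitopcompare}: (i) apply Leray to the $\bbC^*$-bundles and reduce to the map on total spaces $\tilde \Alg_{d,n,*}(C,X) \to \tilde \cM_{d,n,*}$; (ii) apply Leray to the projection to $W_{n,*} \times \Pic^d(C)$ and work over a basis of open subsets $U$; (iii) apply \cref{approximating-subbundles} and \cref{open-intersection-dense-subspace-weak-equivalence} to replace the Banach bundle $E^{\rm top}|_U$ with a finite dimensional subbundle $E^\beta \supseteq E^{\rm alg}|_U$; (iv) use Poincar\'e duality (\cref{gysinPD}) and the long exact sequence in compactly supported cohomology to re-express the problem as showing that the Gysin map between discriminant complements $K^{\rm alg} := E^{\rm alg}|_U - \tilde \Alg_{d,n,*}|_U$ and $K^\beta := E^\beta - \tilde \cM_{d,n,*}^\beta|_U$ is $(I+1)$-connected on compactly supported cochains.

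To finish, I would stratify $E^\beta$ and $E^{\rm alg}|_U$ via pullbacks $Z^\beta$ and $Z^{\rm alg}$ of the stratification of $\Gamma_{\rm alg}(C, \cL_d \otimes V)$ by $\Hilb(C)^{Q_{r+1}}$ restricted to $R$. Forming the bar constructions $\rB(U, P, Z^\beta)$ and $\rB(U, P, Z^{\rm alg})$, I would apply \cref{BarApproximation} to both pairs to show that the natural augmentations into $K^\beta$ and $K^{\rm alg}$ are $(I+2)$-connected, and apply \cref{GysinIso} to show that the induced Gysin map on bar constructions is a quasi-isomorphism. Chasing the resulting commutative square yields the desired $(I+1)$-connectedness.

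The main technical obstacle, beyond faithfully transporting the unpointed argument, is to verify the analog of \eqref{essentialintersection} in this setting: for essential pairs $(w \leq x) \in R$, one must check $Z^\beta_{w \leq x} \cap E^{\rm alg}_w = Z^{\rm alg}_{w \leq x}$. This requires classifying the essential pairs in $R$, extending the list in \cref{essential_example} by cases involving the generator $\ell_{r+1}$ which is active only at $*_C$, and verifying the identity case-by-case using \cref{topological_stratification_definition}. The constraint in $\overline R$ that the $\ell_{r+1}$ divisor exceeds $D_0$ by at most one at $*_C$ ensures that the chains of essential pairs at the basepoint are short, so this verification reduces to a bounded enumeration analogous to the unpointed case.
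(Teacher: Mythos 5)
Your proposal is correct and follows the same route the paper takes: rerun the reductions from \cref{semitopcompare} (Leray, finite-dimensional approximation, Poincar\'e duality, the long exact sequence, and the bar-construction comparison via \cref{BarApproximation} and \cref{GysinIso}) with $Q_{r+1}$ in place of $Q_r$ and the posets $W_{n,*}$, $R$ replacing $W_n$, $(W_n<\Hilb(C)^{Q_r})$. The paper in fact declares the proof ``identical'' and records only one explicit addition, which you gesture at but do not nail down: in the unpointed proof \cref{BarApproximation} was invoked with $R$ taken to be the \emph{entire} poset $(W<\Hilb(C)^{Q})$, which is trivially initial in its upward closure; in the pointed case one uses the strictly smaller $R$ of \cref{subsec:pointedvariant}, and one must verify that this $R$ is initial in $W_{n,*} < \Hilb(C)^{Q_{r+1}}$. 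The paper does this by writing down the retraction: for $x = (D_{\ell_1}, \dots, D_{\ell_{r+1}}, D_0)$, the element $\iota_R(x)$ replaces $D_{\ell_{r+1}}$ by the divisor agreeing with $D_0$ away from $*_C$ and having multiplicity $\min(1, \mult_{*_C}(D_{\ell_{r+1}}-D_0))$ at $*_C$. Your remark that the $\overline R$ constraint caps the excess of the $\ell_{r+1}$-divisor over $D_0$ at $*_C$ by one is the right intuition for why this works, but you should state and check the initiality hypothesis explicitly, since it is a standing hypothesis of \cref{BarApproximation} and is the one place where the pointed setting genuinely differs. Your further concern about re-verifying the analog of \eqref{essentialintersection} for essential pairs involving $\ell_{r+1}$ is a reasonable thing to spell out; the paper leaves it implicit, and it does go through by the same case analysis.
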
    
   
                The proof of \cref{pointedsemitopcompare} is identical to that of \cref{semitopcompare}.  We note that $R_*$ is initial in $W_{n,*} < \Hilb(C)^{Q_{r+1}}$:  if $x = (D_{\ell_1}, \dots, D_{\ell_{r+1}}, D_0)$ then $\eta_R(x) = (D_{\ell_1}, \dots, D_{\ell_{r+1}}', D_0)$  where the divisor $D_{\ell_{r+1}}'$ equals $D_0$ away from $*_C$ and has multiplicity $\min(1, \mult_{*_C} (D_{\ell_{r+1}}- D_0))$ at $*_C$. Therefore $R$ is also initial.

\section{The semi-topological model and spaces of positive maps}
\label{sec:posmapscompare}

Let $X = \Bl_{p_1, \dots, p_r} \bbP(V)$. We defined $\cM_{d,n}$ to be the space parameterizing the data of $L \in \Pic^d(C)$, $w \in W_n$ and a choice up to scalar multiple of a continuous section $s \in \Gamma(L \otimes V)$ intersecting $\ell_i$ to order exactly $w_i$. The results of the previous section can be used establish that the canonical map $$\Alg_{d,n}(C,X) \to \cM_{d,n}$$ induces an isomorphism on  homology in a given range.     (For simplicity, we will only consider the unpointed case in this section, but all of the results hold with the same arguments in the pointed case).

The purpose of this section is to prove that $\cM_{d,n}$ is weakly equivalent to a space parameterizing continuous maps $C \to X$ of positive intersection multiplicity.  (For notational convenience, we fix $d\in \bbN, n \in \bbN^r$ for the remainder of the section and simply write $\cM$).  We let $\cT$ be the subspace of $W_n \times \Top(C, X)$ consisting of $(w \in W_n, f: C \to X)$ satisfying:
\begin{itemize}
	\item the projection of $f$ to $\bbP(V)$ is degree $d$
	\item  $f\inv(E_i)$ is discrete, and $f$ has positive local intersection multiplicity at every point of $f\inv(E_i)$
		\item at a multiplicity $k$ point of $w_i$, the map $f$ has intersection multiplicity $k$ with $E_i$.
\end{itemize}
Here \emph{local intersection multiplicity} is defined as follows:
\begin{defn}\label{intersection-multiplicity}
    Suppose $f : M^d \to N^n$ is a continuous map of oriented manifolds, $Z^{n-d} \subset N$ is an oriented submanifold and $p \in M$ is an isolated point of $f^{-1}(Z)$. 
    Then for a standard neighborhood $V$ of $f(p)$ (i.e.\ such that $(V, V \cap Z) \cong (\bbR^n, \bbR^d)$) with its implied orientation, $f$ induces a map $H_d(M, M - p) \to H_d(V, V - Z)$ and we call the corresponding integer the local intersection multiplicity of $f$ (with $Z$) at $p$.
\end{defn}

The main result of this section is the following. 

\begin{thm}\label{thm:posmapsoverbase}
	There is a canonical weak homotopy equivalence $\cM \to \cT$.
\end{thm}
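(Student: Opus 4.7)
The map $\cM \to \cT$ sends $(L, w, [s])$ to $(w, \tilde f)$, where $\tilde f : C \to X$ is the unique lift to $X$ of the continuous map $C \to \bbP(V)$ obtained by projectivising $s$; this lift exists and is unique because $s$ intersects each $\ell_i$ with positive multiplicity $w_i$ along $\supp w_i$ and nowhere else.

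First, I would exhibit both projections $\cM \to W_n$ and $\cT \to W_n$ as Serre fibrations and check that the map is fiber-preserving, reducing the theorem to showing the induced map on fibers $\cM_w \to \cT_w$ is a weak equivalence for each $w \in W_n$. Local triviality of $\cM \to W_n$ is essentially \cref{vanishing-sections-is-locally-trivial} combined with the product projection $\Pic^d C \times W_n \to W_n$; local triviality of $\cT \to W_n$ follows by continuously translating the divisor $w$ and its preimage.

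The core of the argument is a local-to-global comparison of the fibers. Fix $w \in W_n$ and decompose $C = U \cup \bigsqcup_p B_p$, where $B_p$ is a small closed holomorphic disk around each $p \in \supp w$ and $U = C \setminus \bigsqcup_p \mathrm{int}(B_p)$. Both $\cM_w$ and $\cT_w$ can be assembled as a fibered product of the data of the restrictions to $U$, the restrictions to each $B_p$, and their agreement on the boundary circles $\partial B_p$. The key local input is the Alexander-trick retraction from \cref{remark:Whysemitop?}: at a multiplicity-$k$ point $p$ of $w_i$, the space of continuous sections of $L \otimes V$ on $B_p$ intersecting $\ell_i$ holomorphically to order exactly $k$ deformation retracts onto the subspace of $\bbC$-linear maps $z \mapsto z^k a$ with $a \in V/\ell_i - 0$; analogously, the space of continuous maps $B_p \to X$ with positive local intersection multiplicity $k$ with $E_i$ retracts onto the same finite-dimensional model. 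Away from $\supp w$ both sides simply parametrise continuous maps $U \to \bbP(V) \setminus \{p_1, \dots, p_r\}$ with matching boundary data. Patching these local equivalences along the $\partial B_p$ circles then yields the fiberwise weak equivalence.

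The main obstacle is the $\Pic^d(C)$ datum parametrised in $\cM$ but not directly in $\cT$. Because the local Alexander-trick model and the intersection condition depend only on the underlying topological line bundle, different holomorphic structures in the fiber of $\cM \to \cT$ must be absorbed by the ambiguity $\mathrm{Map}(C, \bbC^*)/\bbC^*$ in lifting a continuous map $f$ to a continuous section $s$. Verifying rigorously that the full fiber of $\cM \to \cT$ is weakly contractible, once the $\Pic^d(C)$-variation is matched against this rescaling ambiguity, will be the technical heart of the proof. One concrete route is to construct an explicit homotopy inverse $\cT \to \cM$ by choosing, continuously in $f$, a holomorphic structure on $f^* \cO(1)$ (for instance via a Hodge-theoretic splitting of the underlying smooth bundle) together with a canonical lift $s$, and to check that the composition with $\cM \to \cT$ is homotopic to the identity.
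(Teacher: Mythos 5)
Your outline is in the right direction, and you correctly identify the two essential ingredients (the strict-transform lift from projective space to the blowup, and Alexander-trick retractions near the support of $w$), but several load-bearing steps are left unproved, and the one you yourself flag as the ``technical heart'' is exactly where the paper does real work that your sketch does not replace.

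First, you reduce to fibers by asserting that $\cM \to W_n$ and $\cT \to W_n$ are Serre fibrations. That is more delicate than it looks: $\cT$ is \emph{not} obviously locally trivial over $W_n$, because a neighborhood in $W_n$ can split a single point of $w$ into a cluster of points with lower multiplicities, and ``translating the divisor'' does not produce a local trivialization across such strata. The paper avoids this by never claiming a fibration: it covers $\cM$ and $\cT$ by compatible open subsets $\cM_V(K)$, $\cT_V(K)$ indexed by distinguished opens $V \subseteq W_n$ and unions of labelled balls $K$, runs a Dugger--Isaksen hypercover argument to reduce to those pieces, and then \emph{retracts} $\cM_V(K)$ and $\cT_V(K)$ onto the fibers $\cM_w(K)$, $\cT_w(K)$ by explicit Alexander-trick homotopies. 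Your fibration claim, if you wish to use it, needs an argument.

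Second, your proposed fibered-product decomposition of $\cM_w$ over the pieces $U$, $B_p$ of $C$ does not interact well with the $\Pic^d(C)$ datum. The holomorphic line bundle $L$ is a \emph{global} piece of data; on each contractible piece $U$ or $B_p$ it trivializes, so the degree and the holomorphic structure are carried entirely in the gluing along the boundary circles, precisely where your ``matching boundary data'' lives. You acknowledge that absorbing the $\Pic^d(C)$-variation into the rescaling ambiguity is the hard part, and propose a Hodge-theoretic splitting, but you do not carry it out. The paper instead factors the map as $\cM \to \cM' \xrightarrow{\sim} \cT' \hookrightarrow \cT$, where $\cM'$ drops the holomorphic line bundle and records only the continuous map $C \to \bbP(V)$, and proves the following two facts directly: (a) the fiber of $\cM_w(K) \to \cM'_w(K)$ over a map $f$ is the space of holomorphic structures on $f^*\cO(1)$, and this is contractible because it fibers over the contractible space of $C^\infty$ structures with fibers that are torsors for $(1,0)$-forms; (b) the inclusion $\cT'_w(K) \to \cT_w(K)$ (forgetting that the intersection with $E_i$ is holomorphic) is a weak equivalence, via a fiberwise Alexander-trick retraction onto maps that agree with their degree-$m$ Taylor polynomial on a small disk. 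Your sketch gestures at (b) with ``retracts onto the same finite-dimensional model,'' but the two retractions are not formally the same and need the explicit formulas the paper supplies; and it leaves (a) entirely open. Until (a) is proved, the argument does not close.

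Finally, the ``unique lift to $X$'' in your first paragraph is the strict transform; its existence and continuity for continuous sections that merely intersect the $\ell_i$ \emph{holomorphically} to the prescribed order (not actually holomorphic sections) is the content of \cref{strictransform}, which the paper proves by writing $s_i = \rho_w \tilde{s}_i$ and exhibiting the map in the standard charts of the blowup. This is short but should not be asserted without proof.
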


   We construct the map $\cM \to \cT$ in two stages.  First there is a projection map,  $\cM \to \cM'$ where $\cM'$ consists of pairs $w \in W_n$ and continuous degree $d$ maps $f: C \to \bbP^n$  such that $f$ intersects $p_i$ to order exactly $w_i$.

Next we construct a homeomorphism $\cM' \to \cT'$, where $\cT' \subseteq \cT$ consists of continuous degree $d$ maps $f: C \to X$ such that $f$ intersects $E_i$ holomorphically to order $w_i$. The map $\cM' \to \cT'$ is induced by the strict transform operation defined in the following subsection.

\subsection{Strict transform}  Let $U \subseteq \bbC$.  Consider the blowdown map $\pi: \Bl_0 \bbC^v \to \bbC^v$.  

\begin{prop}\label{strictransform}
	Let $w \in \Sym^m U$.  Then postcomposition with $\pi$ defines a homeomorphism between the following spaces of  functions:
	\begin{itemize}
		\item  continuous functions $U \to \Bl_0 \bbC^v$ intersecting the exceptional divisor holomorphically to order exactly $w$.  
		\item  continuous functions $U \to \bbC^v$ intersecting the origin holomorphically to order exactly $w$.  
	\end{itemize}
\end{prop}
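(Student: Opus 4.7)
My plan is to construct an explicit inverse map (the strict transform) and verify it is continuous in families, so that both directions give continuous maps of function spaces. Post-composition with $\pi$ is obviously continuous, so the work is entirely in producing and analyzing the lift.

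First, I would construct the strict transform pointwise. Given $f : U \to \bbC^v$ intersecting the origin holomorphically to order exactly $w$, on $U \setminus \supp(w)$ the function $f$ is nonvanishing, so $\tilde f(u) := (f(u), [f(u)]) \in \bbC^v \times \bbP^{v-1}$ is the unique lift to $\Bl_0 \bbC^v$. Near a point $p \in \supp(w)$ of multiplicity $k$, choose a small disk $V \ni p$ containing no other points of $\supp(w)$, so that $\rho_w|_V = (z-p)^k h(z)$ with $h$ holomorphic and nonvanishing on $V$. By \cref{localrepresentability}, the factorization $f = \rho_w \cdot g$ produces a continuous $g : V \to \bbC^v$ with $g(p) \neq 0$ (by the "exactly $w$" hypothesis). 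Set $\tilde f(p) := (0, [g(p)]) \in E$.

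Next I would verify continuity of $\tilde f$ at $p$. For $u \in V \setminus \{p\}$ the direction coordinate $[f(u)] = [h(u) g(u)] \in \bbP^{v-1}$ (the scalar factor $(u-p)^k$ drops out). Since $h \cdot g$ is continuous and nonvanishing on all of $V$, this direction extends continuously to $[h(p) g(p)] = [g(p)]$, matching the definition of $\tilde f(p)$. Then I would check that $\tilde f$ intersects $E$ holomorphically to order exactly $w$: in an affine chart of $\Bl_0 \bbC^v$ adapted to the direction $[g(p)]$, the exceptional divisor is cut out by one coordinate which pulls back to $f_j/(\text{unit})$ for some index $j$ with $g(p)_j \neq 0$; the Taylor expansion $f_j = g(p)_j \rho_w + o(|\rho_w|)$ is exactly the condition required in \cref{vanishingfamily}.

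For the inverse direction, given $\tilde f : U \to \Bl_0 \bbC^v$ intersecting $E$ holomorphically to order exactly $w$, the composition $f = \pi \circ \tilde f$ automatically intersects $0$ to order exactly $w$, by running the same chart computation in reverse: the defining coordinate for $E$ gives one component $f_j$ with the correct Taylor behavior, and the remaining components are bounded multiples of this one.

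Finally I would upgrade the pointwise constructions to continuity on function spaces, i.e., check that the two bijections send continuous families to continuous families. The forward direction is formal. For the inverse direction, continuity of the lift in a continuous family $(w_t, f_t)$ follows from \cref{localrepresentability}, which guarantees that the decomposition $f_t = \rho_{w_t} \cdot g_t$ produces a continuous family $g_t$, whence both $(u, t) \mapsto f_t(u)$ and $(u, t) \mapsto [h_t(u) g_t(u)]$ are continuous; these glue to a continuous family of lifts by the same local analysis as above. The main obstacle is organizing this last step cleanly: one must track two local coordinate systems (the base coordinate near $p$ and the chart on $\Bl_0 \bbC^v$ adapted to the varying direction $[g_t(p_t)]$), but the required invariance of the topology under biholomorphisms is exactly what was established in \cref{localrepresentability}(2).
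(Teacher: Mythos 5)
Your proof is correct and uses essentially the same approach as the paper: the lift is $(f, [\tilde f_1 : \dots : \tilde f_v])$ where $\tilde f_i = f_i/\rho_w$ (your $g$), one reads off the intersection order with the exceptional divisor in the standard blowup chart where some $\tilde f_j(p) \neq 0$, and continuity (including in families) comes from \cref{localrepresentability}. The paper's version is more compressed — it writes the global factorization $f_i = \rho_w \tilde f_i$ once and checks the chart coordinates directly — while you build the lift pointwise and then verify continuity at $\supp(w)$, but the essential content is identical.
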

\begin{proof} Given $(f_1, \dots, f_{v}) : U \to \bbC^{v}$ vanishing to order exactly $w$ we may write $f_i$ uniquely as $f_i = \rho_w \tilde f_i$, where $\rho_w$ is the unique monic degree $m$ polynomial vanishing at $w$.  Then $$(f_1, \dots, f_{v}) \times [\tilde f_1: \dots : \tilde f_{v}]$$ is a map to the blowup.   Recall that the blowup $\Bl_0 \bbC^{v} \subseteq \bbC^v \times \bbP(\bbC^v)$ has standard coordinate charts for $i = 1, \dots n$ given by $$\Bl_0(\bbC^v)-\bbV(\tilde x_i) \to \bbC^v \qquad (x_1, \dots, x_v) \times [\tilde x_1: \dots : \tilde x_v] \mapsto (\tilde x_1/\tilde x_i, \dots, x_i, \dots, \tilde x_{v}/\tilde x_i).$$ 
Thus locally at any point of $U$ where $\tilde f_i$ does not vanish, in the $i$th chart $U$ is given by $$(\tilde f_1/ \tilde f_i, \dots  ,f_i,  \dots,  \tilde f_{{v}}/\tilde f_i),$$ hence intersects the exceptional divisor holomorphically to order exactly $w$.  
\end{proof}

The proposition immediately globalizes to the case where $(\bbC^n, 0)$ is replaced by a complex manifold and finite set of points $(M, \{p_1, \dots, p_r\})$, yielding a homeomorphism between spaces of:

\begin{itemize}
	\item  continuous functions $C \to \Bl_{p_1, \dots, p_r} M$ intersecting the exceptional divisor $E_i$ holomorphically to order exactly $w_i$.  
		\item  continuous functions $U \to M$ intersecting the point $p_i$ holomorphically to order exactly $w_i$.  
\end{itemize}
The inverse homeomorphism is called the \emph{strict transform},  yielding a homeomorphism $\cM' \to \cT'$.

\subsection{Comparing $\cM$ and $\cT$}  So far, we have constructed a map $\cM \to \cT$. We now prove that this map is a weak homotopy equivalence.  To do so, we will cover $\cT$ and $\cM$ by open subsets of the following form.

We will use a basis of open neighborhoods of $W_n \subseteq \prod_{i = 1}^r \Sym^{n_i}(C)$ which we call distinguished opens.
\begin{defn}
    Fix a metric on $C$.  A point $w \in W_n$ corresponds to a finite configuration of points in $T \subseteq C$, together with a label for each $t\in T$ of the form $(i_t , m_t)$,  where $i_t= 1, \dots, r$ and $m_t \in \bbN_{> 0}$.  For any $\epsilon > 0$ with $2 \epsilon < \underset{t\neq t' \in T} \min d(t,t')$, we say that the \emph{distinguished open subset} of radius $\epsilon$ around $w$ is the open subset  $$\rB(\epsilon,w) := \prod_{t \in T} \Sym^{m_t}(\rB(\epsilon,t)) \subseteq W_n,$$ where $\rB(\epsilon, t)$ is the open ball of radius $\epsilon$ around $s$.
\end{defn}

\begin{defn}
	Given a finite union of closed balls $K \subseteq C$  labelled by $\{1, \dots, r\}$,  we let $\cT(K)$ denote the subspace consisting of functions $f: C \to Y$ that map the balls of $K$ labelled by $i$ to the tubular neighborhood ${\rm Tub}(E_i)$.   For a subset $R \subseteq W_n$, we let $\cT_{R} (K):= \cT_R \cap \cT(K)$, where $\cT_R$ is the preimage of $R$ under the projection $\cT \to W_n$.
	
	We define $\cM_{R}(K)$ to be the pre-image of $\cT_{R}(K)$.   For a distinguished open subset $V \subseteq W_n$,  centered on $w$ of radius $\epsilon$, we write $V \subseteq K$ if the radius $\epsilon$ neighborhood of each point of $w$ labelled by $i$ is contained in a ball of $K$ labelled by $i$. 	
\end{defn}

First we establish a local to global principle.
\begin{prop}\label{weak-equivalence-via-hypercovering}
	To prove that $\cM \to \cT$ is a weak equivalence, it suffices to prove that  $\cM_{V}(K) \to \cT_V(K)$ is a weak equivalence for every $V \subseteq K$ such that $V$ is a distinguished open and $K$ is a finite union of closed balls labelled by $\{1, \dots, r\}$.
\end{prop}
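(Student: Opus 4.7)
The plan is to exhibit the $\cT_V(K)$ as a basis of open sets for the topology on $\cT$, show that $\cM_V(K)$ is exactly the preimage of $\cT_V(K)$ under $\cM \to \cT$, and then deduce the global weak equivalence from the local ones on each piece of this basis.

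First, I would verify that the sets $\cT_V(K)$ with $V \subseteq K$ form an open cover of $\cT$. Given $(w, f) \in \cT$, the preimages $f^{-1}(E_i)$ are discrete and agree with the support of $w_i$, so they consist of finitely many points. By continuity of $f$, for $\epsilon > 0$ sufficiently small, $f$ maps an $\epsilon$-neighborhood of each point of $\mathrm{supp}(w_i)$ into the tubular neighborhood ${\rm Tub}(E_i)$. Taking $V = \rB(\epsilon, w)$ and $K$ to be a union of closed balls of radius, say, $2\epsilon$ around each point of $\mathrm{supp}(w)$ with matching labels, one has $V \subseteq K$ and $(w,f) \in \cT_V(K)$. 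The identity $\cM_V(K) = (\cM \to \cT)^{-1}(\cT_V(K))$ holds by definition.

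Next I would check that this cover is closed under finite intersections up to refinement by sets of the same form: $\cT_{V_1}(K_1) \cap \cT_{V_2}(K_2) = \cT_{V_1 \cap V_2}(K_1 \cup K_2)$, and although the intersection of two distinguished opens need not be distinguished, it is a union of distinguished opens centered at each of its points (with sufficiently small radii). Therefore $\{\cT_V(K)\}$ is a basis for the topology of $\cT$, and pulls back to a basis $\{\cM_V(K)\}$ of $\cM$. Assuming the hypothesis that $\cM_V(K) \to \cT_V(K)$ is a weak equivalence for each $V \subseteq K$, the conclusion follows by a standard descent argument for weak equivalences along an open cover closed under finite intersections (equivalently, by comparing homotopy colimits over the \v{C}ech nerve).

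The main obstacle will be making the descent step rigorous, since the cover is not locally finite. I expect the cleanest route is a compactness argument: any continuous map from a compact CW-pair $(Y,Y_0)$ into $\cT$ has compact image, hence lies in finitely many $\cT_{V_j}(K_j)$; subdividing $Y$ finely enough, each simplex lands in a single $\cT_{V_j}(K_j)$, and one assembles the lifts / null-homotopies produced by the local weak equivalences across adjacent simplices using the compatibility on intersections. This yields surjectivity and injectivity on all homotopy groups $\pi_i(\cM) \to \pi_i(\cT)$, completing the proof.
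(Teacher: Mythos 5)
Your proposal follows essentially the same outline as the paper: exhibit $\{\cT_V(K)\}$ as an open cover, note that $\cM_V(K)$ is the preimage, check that finite intersections are covered by sets of the same form, and then apply a homotopy-colimit descent argument. The paper packages the descent step by forming hypercovers $\cU_\cT$ and $\cU_\cM$ from disjoint unions of the $\cT_V(K)$ (resp.\ $\cM_V(K)$) and invoking Dugger--Isaksen's theorem that a space is the homotopy colimit of any hypercover; your ``compactness plus subdivision'' outline amounts to re-proving that theorem by hand, which is workable but more delicate than the proposal suggests (the ``assembling lifts across adjacent simplices'' step is precisely what the hypercover machinery handles). One small but genuine error: the sets $\cT_V(K)$ are \emph{not} a basis for the topology of $\cT$, since they impose no constraint on a map $f$ away from $K$, whereas the compact-open topology on $\Top(C,X)$ has basic opens constraining $f$ on arbitrary compacta in $C$. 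This claim is not needed — what is used is only that the $\cT_V(K)$ cover $\cT$ and that any finite intersection of them is a union of sets of the same form — so the error is harmless, but it should be removed to avoid a false statement.
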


\begin{proof}
	The subspace $\cT_{V}(K) \subseteq \cT$ is open by definition of the compact open topology.   Furthermore, for any $(w,f) \in \cM$, there is an $\epsilon$ such that the image of the closed ball of radius $\epsilon$ around every point of $f\inv(E_i)$ contained in ${\rm Tub}(E_i)$.  Taking $K$ to be the union of these balls and $V$ to be a distinguished neighborhood of radius $\epsilon/2$ of $w$, we have that $(w,f) \in \cT_V(K)$.  So $\cT_{V}(K)$ form an open cover.
	
	We have the identities $\cT_{V_1}(K_1) \cap  \cT_{V_2}(K_2) = \cT_{V_1 \cap V_2} (K_1 \cup K_2)$ and $\cup_{\alpha} \cT_{V_\alpha} (K) = \cT_{\cup_{\alpha} V_{ \alpha}}(K)$.  Since the distinguished open subsets form a basis of $W_n$, it follows from these identities that any finite intersection of subsets of the form $\cT_{V}(K)$ is covered by opens of the same form.  Therefore, we may form a hypercover $\cU_\cT$ of $\cT$ by disjoint unions of the open subsets $\cT_{V}(K)$.  Taking preimages, we obtain a hypercover $\cU_\cM$ of $\cT=N$ by disjoint unions of $\cM_{V}(K)$.  
	
	By the main result of Dugger--Isaksen \cite{dugger2004topological}, we have that $\underset{\Delta \op}{\rm hocolim}~\cU_\cM \to ~\cM$ and $\underset{\Delta \op} {\rm hocolim}~ \cU_{\cT} \to \cT$ are weak equivalences.  Therefore, if $\cM_K(V) \to \cT_K(V)$ are weak equivalences for every $K,V$ then $\cM \to \cT$ is a weak equivalence.
\end{proof}

Next we retract onto fibers.  
\begin{prop}\label{Retract_To_Fibers}
	Let $K \subseteq C$ be a finite union of closed balls labelled by $1, \dots, r$. Let $V$ be a distinguished open neighborhood of $w \in W_n$   satsifying $V \subseteq K$.    Then the inclusions $\cT_{w}(K) \to \cT_{V}(K)$ and $\cM_{w}(K) \to \cM_{V}(K)$ are homotopy equivalences.  
\end{prop}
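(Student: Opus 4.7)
The plan is to construct strong deformation retractions of $\cT_V(K)$ and $\cM_V(K)$ onto their respective fibers over $w$, by lifting a canonical contraction of the base $V$.

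First I would check that $V = \prod_{t \in T} \Sym^{m_t}(\rB(\epsilon, t))$ is contractible. In holomorphic coordinates identifying each $\rB(\epsilon, t)$ with an open disk in $\bbC$ centered at $t$, the symmetric power $\Sym^{m_t}(\rB(\epsilon, t))$ is homeomorphic via elementary symmetric polynomials to an open subset of $\bbC^{m_t}$ that is star-shaped about the point corresponding to $m_t \cdot t$. The straight-line homotopy $H^V: V \times [0,1] \to V$, which in coordinates scales the roots of each $w'$ radially toward their common center, gives a contraction with $H^V_0 = \id_V$ and $H^V_1 \equiv w$.

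Next, I would lift $H^V$ to the total spaces. Given $(w', f) \in \cT_V(K)$ (resp.\ $\cM_V(K)$), define $(w'_s, f_s)$ with $w'_s = H^V_s(w')$ by setting $f_s = f$ outside the balls $\rB(\epsilon, t)$ and modifying $f$ inside each ball so that the intersection with $E_{i_t}$ is exactly $w'_s|_{\rB(\epsilon,t)}$. For $\cM_V(K)$ this modification is concrete: picking a trivialization of $\cL_d \otimes V$ and a complement $\ell_{i_t}^\perp$, the component of the section $s$ in $\ell_{i_t}^\perp$ factors as $\rho_{w'_i} \cdot \tilde s$ by \cref{localrepresentability}, where $\tilde s$ is continuous and nowhere zero in $\ell_{i_t}^\perp$. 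Replacing $\rho_{w'_i}$ by $\rho_{w'_{s,i}}$ on a smaller concentric disk containing all of $w'_s|_{\rB(\epsilon,t)}$, and interpolating the polynomial factor back to $\rho_{w'_i}$ via a cutoff function in the surrounding annulus (with $\tilde s$ left untouched), produces a continuous family $s_s$ whose zero locus inside $\rB(\epsilon, t)$ is exactly $w'_{s,i}$. For $\cT_V(K)$ I would first reduce to the subspace $\cT'_V(K)$ of holomorphically intersecting maps via the strict transform of \cref{strictransform} (which identifies $\cM'_V(K)$ with $\cT'_V(K)$ and transports the $\cM$-retraction), and then exhibit $\cT'_V(K) \hookrightarrow \cT_V(K)$ as a deformation retract by an Alexander-trick-type homotopy that straightens the normal component of $f$ to its Taylor polynomial in a neighborhood of each support point of $w'$, interpolating with the original $f$ via a cutoff farther out.

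The hard part will be verifying continuity of all these local modifications in the parameters $(w', f, s)$ simultaneously, and ensuring that neither the polynomial-factor interpolation nor the straightening creates spurious zeros of the normal coordinate or destroys positivity of local intersection multiplicities. The condition $V \subseteq K$ is used crucially here: it guarantees a uniform annular buffer inside each ball of $K$ surrounding the corresponding $\rB(\epsilon, t)$, in which the cutoff can be supported strictly outside the support of $w'_s|_{\rB(\epsilon,t)}$ for every $s \in [0,1]$. This uniformity of the buffer lets the same cutoff function work for all $(w', f) \in \cT_V(K)$, making the construction continuous in all inputs and confining every modification to a region where $\tilde s$ (resp.\ the normal coordinate of $f$) does not vanish, so that the exact-vanishing and positive-multiplicity conditions are preserved throughout the homotopy.
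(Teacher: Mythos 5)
Your overall plan — contract the base $V$ and lift the contraction fiber-by-fiber — is the same broad idea as the paper's, but the execution differs materially in both halves, and one step has a genuine gap.

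\paragraph{The cutoff interpolation for $\cM$.}
The paper's retraction does \emph{not} interpolate the polynomial factor by a convex combination; it interpolates the \emph{roots}, keeping $p_{w,u}$ at all times a monic product $\prod_{c \in w}(z - \alpha(u,|z|)\, c)$ with $\alpha \in [0,1]$ (see formula \eqref{tworing}). Because each interpolated root $\alpha c$ has modulus $<1$, the product cannot vanish in the exterior region, and the retraction scales the zero locus continuously without ever creating new zeros. Your proposal instead forms the pointwise convex combination $\chi(|z|)\,\rho_{w'_{s,i}}(z) + (1-\chi(|z|))\,\rho_{w'_i}(z)$. That expression \emph{can} vanish in the transition annulus even when neither endpoint does: for example with $\rho_{w'_i}(z) = (z-1)^3$ and $\rho_{w'_{s,i}}(z) = z^3$ (both with roots in the closed unit disk), the combination $\tfrac19 z^3 + \tfrac89(z-1)^3$ vanishes at $z = 1 + i/\sqrt3$, which has modulus $2/\sqrt3 > 1$ and so lies \emph{outside} $\rB(\epsilon,t)$. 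Your stated safeguard — ``confining every modification to a region where $\tilde s$ does not vanish'' — does not help, because the spurious zero comes from the scalar polynomial factor, not from $\tilde s$. This is a real gap. It could be patched by a quantitative Rouch\'e-style argument forcing the cutoff to transition only at radii $\gtrsim 2^m m$ (where the common leading term $z^m$ dominates), but you neither state nor prove such a bound, and it is simply not the mechanism the paper uses.

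\paragraph{The route to $\cT$.}
The paper handles $\cT_V(K) \to \cT_w(K)$ directly, by a three-step retraction (radial constancy near the boundary, then pushing $\pi\circ f$ to $\pi\circ f(0)$ on the inner disk via path-lifting in the circle bundle, then an Alexander trick on the inner disk) that fixes $f$ on $\partial B$ so the local moves glue. You instead propose to reduce to $\cT'_V(K)$ via the strict transform, transport the $\cM$-retraction, and then show $\cT'_V(K) \hookrightarrow \cT_V(K)$ is a deformation retract. That last step is a strict strengthening of what the paper proves: the paper only establishes $\cT'_w(K) \to \cT_w(K)$ as an equivalence, at a \emph{single} fiber $w$, in the subsequent proposition comparing $\cM$ and $\cT$. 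Promoting the Taylor-straightening homotopy to a retraction parametrized over all of $V$ (continuously in $w'$, and compatibly with the projection to $V$) is more work, and you also need your $\cM$-retraction to descend along $\cM_V(K) \to \cM'_V(K)$ (i.e.\ to be defined over $\Pic^d$). None of this is impossible, but it trades the paper's self-contained argument for a longer chain of dependencies, and it leaves the load-bearing continuity and no-spurious-zero checks to the reader.

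\paragraph{Summary.}
The structural idea (contract $V$, lift) matches the paper, but the lift for $\cM$ uses value-interpolation instead of root-interpolation and currently has a spurious-zero gap, and the argument for $\cT$ takes a longer detour through $\cT'$ whose required ingredients are stronger than anything established in the paper and are not supplied.
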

\begin{proof}

First we construct a deformation retraction of $\cT_V(K)$ onto $\cT_w(K)$.  Recall that $w$ corresponds to a labelled subset $T \subseteq C$ and $V$ is determined by a radius $\epsilon$.  Let  $t \in T$ be labelled by $i$ with multiplicity $m$.  We will construct a deformation retraction from the space of pairs $w' \in \Sym^{m}(B(\epsilon,t))$,  $f: \overline {\rB(\epsilon, t)} \to  {\rm Tub}(E_i)$ with multiplicities of $f\inv(E_i)$ prescribed by $w'$, to the subspace of pairs that satisfy $w' = m t$.  This deformation retraction will fix the values of $f$ on the boundary of $\overline {B(\epsilon, t)}$,  so that by gluing together the retractions for each element of $T$ we obtain the desired retraction of $\cT_V(K)$.

To construct the retraction, we use a variant of the Alexander trick.  Identify ${\rm Tub}(E_i)$ with the normal bundle $\pi : N_{E_i} \to E_i$, and $B(\epsilon,t)$ with the unit disk.    We proceed in three steps.  First, given $w',f$ make $f$ radially constant on the points of absolute value $\geq 1/2$,  via $$f_u(x) = \begin{cases} f(\frac{x}{1-u}) & \text{if } |x| \leq 1-u\\ f(x/|x|) & \text{if } |x| \geq {1-u} \end{cases}$$ for $u \in [0,1/2]$.  (We define $w'_u$ to be the scaling of $w'$ by $1-u$). 

 Second we make $\pi \circ f(x) = \pi \circ f(0)$ for all $x$ with $|x| \leq 1/2$,  using the radial path from $\pi(f(x))$ to $\pi(f(0))$ and lifting to $N_{E_i}$.  More precisely we lift the homotopy  $$\overline f_u(x) = \begin{cases} \pi (f((1-u)x) & \text{if $|x| \leq 1/2$}  \\ \pi (f((1-u)x + xu(2|x| - 1)) & \text{if $|x| \geq 1/2$}\end{cases}$$ for $u \in [0,1]$ to $N_{E_i}$. (To perform the lift we choose a metric on $N_{E_i}$ and write $N_{E_i}$ as the cone on the unit circle bundle.  Then the universal path lifting for the unit circle bundle provides a continuous way of lifting paths of maps $X \to E_i$ to paths of maps $X \to N_{E_i}$ such that  the distance from the zero is preserved section).

Finally  we apply the Alexander trick to the radius $1/2$ disk $$f_u(x) = \begin{cases}  (1-u)f(\frac{x}{1-u}) & \text{if } |x| \leq 1/2 (1-u)\\  2|x|f(\frac{x}{2|x|}) & \text{if } 1/2(1-u) \leq |x| \leq 1/2 \\  f(x) & \text{if } |x| \geq 1/2\end{cases}$$ for $u \in [0,1]$ and $f_1(0) = 0$.  This three step process yields the desired retraction,  because in the second step $w'$ is fixed and in the third step $w'$ is scaled down to the origin.

Next we construct a deformation retraction of $\cM_V(K)$ onto $\cM_w(K)$.   Similarly to above, we will construct a retraction from the space of pairs $(w', s)$, where $w' \in \Sym^{m}(B(\epsilon,t))$,  and $s$ is a section of $L\otimes V$ over $\overline{B(\epsilon, t)}$ that is $w'$-holomorphic and such that the multiplicity of $f\inv(E_i)$ is prescribed by $w'$, to the subspace of pairs that satisfy $w' = m t$.  Gluing together these local retractions then yields a global retraction.

  To describe the retraction we choose coordinates $V \iso \bbC^v$ such that $\ell_i = \bbC \times 0^{v-1}$, and $s$ corresponds to sections $s_1, \dots, s_{v}$ of $L$.   Because the map to $\bbP^{v-1}$ associated to $s$ takes $\overline {B(\epsilon,t)}$ to the neighborhood $\pi({\rm Tub}(E_i))$ of $p_i$, we have that $s_1$ nonvanishing.    Therefore $s_1$ defines a continuous trivialization of $L|_{\overline{ B(t,\epsilon)}}$  (which varies continuously in $w$ and $s_1$) that we use to identify $s_1, \dots, s_v$ with functions $U \to \bbC$.   (Here $s_1$ is the constant function $1$.)   Then because $s_i$ vanish to order $w$ for $i \geq 2$, we have that $s_i = \tilde s_i p_w$ for unique nonvanishing functions $\tilde s_i$ . Recall that $p_w(z)$ is the polynomial in $z$ with roots at $w$, namely  $\prod_{c \in w} (z - c)$. 
  
  We retract $w$ to the origin radially, letting $w_u = (1-u) w$ for $u \in [0,1]$,  and defining $s_{i,u} = \tilde s_i ~ p_{w,u}$  for $i > 1$  where \begin{equation}p_{w,u}(z):=
     \begin{cases} \prod_{c \in w}( z - (1-u)c) & \text{if } |z| \leq \max(1-u, 1/2) \\ 
  			  \prod_{c \in w}( z - |z|c) & \text{if } u \leq 1/2, ~ |z| \geq 1-u  \\
   			 \prod_{c \in w}( z - (1+2u(|z|-1))c) & \text{if } u \geq 1/2, ~ |z| \geq 1/2.
      \end{cases} \label{tworing} \end{equation}
In words, we break up the unit disk into two regions:  the interior disk of radius $\max(1-u, 1/2)$ and the exterior ring.   On the interior, we scale down the roots of $p_w$ to the origin,  and on the exterior we interpolate the roots linearly so that $p_{w,u}$ is unchanged on the boundary of the ball.  
\end{proof}

\begin{prop}
	Let $w \in W_n$, and $K$ be a union of labelled closed balls satisfying $w \in K$.  Then $\cM_w(K) \to \cT_w(K)$ is a homotopy equivalence. 
\end{prop}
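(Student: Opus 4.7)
The plan is to factor the map through an intermediate subspace and handle the two factors separately. Define $\cT'_w(K) \subseteq \cT_w(K)$ to consist of those $(w,f) \in \cT_w(K)$ for which $f$ intersects each $E_i$ \emph{holomorphically} to order exactly $w_i$ at each point of $\supp(w_i)$ (in the sense of \cref{vanishingfamily}). Our map then factors as $\cM_w(K) \xrightarrow{\alpha} \cT'_w(K) \hookrightarrow \cT_w(K)$, where $\alpha$ is induced by the strict transform. I would prove separately that $\alpha$ is a homotopy equivalence and that $\cT_w(K)$ deformation retracts onto $\cT'_w(K)$.

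For $\alpha$, I would apply \cref{strictransform} locally on each ball $B_t \subset K$, glued with the tautological identification outside $\bigcup_t B_t$ where $f$ avoids the exceptional divisors and the blowdown is a biholomorphism. This produces a homeomorphism between $\cT'_w(K)$ and the analogous space $\cM'_w(K)$ of pairs $(w,\bar f)$ with $\bar f\colon C \to \bbP(V)$ continuous of degree $d$ intersecting $p_i$ holomorphically to order exactly $w_i$ and satisfying the tubular condition. Under this identification $\alpha$ becomes the projectivization $(L, s)/\bbC^* \mapsto (w, [s])$, and I would argue its fibers are contractible.

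The main content is the deformation retraction $\cT_w(K) \to \cT'_w(K)$. Working locally on each $B_t = \overline{B(\epsilon, t)}$, I would continuously deform $f|_{B_t}$ --- which takes values in ${\rm Tub}(E_i)$ and has topological intersection multiplicity exactly $m_t$ at $t$ --- into a map whose normal-bundle component takes the form $(z-t)^{m_t} h(z)$ in a holomorphic local coordinate $z$ of $C$ at $t$, for some continuous non-vanishing $h$. Closely modeled on the three-step Alexander-trick construction in the proof of \cref{Retract_To_Fibers}, the deformation proceeds by first making $f$ radially constant on an outer annulus of $B_t$, then (under the identification ${\rm Tub}(E_i) \cong N_{E_i}$) flattening the base-component on an inner disk, and finally rectifying the fiber-component on the inner disk to the desired holomorphic form via an interpolation in the style of \eqref{tworing}. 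Fixing $f|_{\partial B_t}$ throughout ensures that the local deformations patch into a global retraction. The hard part will be ensuring that at every stage of the homotopy the normal-bundle component remains nonvanishing on $B_t \setminus \{t\}$ with winding number $m_t$ around $t$, so that no spurious intersections with $E_i$ open up and the multiplicity at $t$ is preserved; I expect this to be handled, as in \cref{Retract_To_Fibers}, by designing the transition-annulus interpolation so that the zeroes of the local-coordinate polynomial are linearly scaled rather than annihilated --- exactly the mechanism of the two-ring formula \eqref{tworing}.
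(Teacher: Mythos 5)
Your high-level factorization is the same as the paper's: you pass through $\cT'_w(K)$, identify it with $\cM'_w(K)$ via strict transform, and then handle $\cM_w(K) \to \cM'_w(K)$ and $\cT'_w(K) \hookrightarrow \cT_w(K)$ as two separate problems. So this is not really a different route; the content is entirely in the two pieces you defer, and there are genuine gaps in each.

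For $\cM_w(K) \to \cM'_w(K)$: you describe the map as a projectivization and assert the fibers are contractible. This is the crux, not a formality. The fiber over a $w$-holomorphic map $\bar f: C\to\bbP(V)$ is the space of holomorphic structures on the underlying topological line bundle $\bar f^*\cO(1)$ (up to holomorphic isomorphism). The paper shows contractibility by fibering this over the space of $C^\infty$ structures on that bundle — contractible because $C^\infty(C,\bbC^*)\hookrightarrow C^0(C,\bbC^*)$ is an equivalence — with fiber an affine space of unitary connections. Nothing in your sketch indicates why this space should be contractible, and it is not an elementary fact.

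For $\cT'_w(K)\hookrightarrow \cT_w(K)$: your proposed step 3 is where the argument breaks. After steps 1 and 2 you have a continuous map $\tilde f: D_{1/2}\to N_p\cong\bbC$ with $\tilde f^{-1}(0)=\{0\}$ and winding number $m$ — nothing more. The interpolation formulas \eqref{tworing} and \eqref{Tayloragreement} move polynomial roots or compare a function to its Taylor polynomial; they are only applicable once the input already has the structure of a polynomial (in \cref{Retract_To_Fibers}) or already intersects $E_i$ holomorphically to order $m$ (in the paper's $H$-retraction, where the key property $\lim_{t\to 0}t^{-m}j(tz)$ equals the Taylor polynomial fails for a generic continuous $\tilde f$). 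There is no "local-coordinate polynomial" attached to an arbitrary element of $\cT$. Moreover the naive fix — radialize $\tilde f$ via the Alexander trick and then deform the boundary circle map to $z\mapsto az^m$ — produces $|z|\cdot a(z/|z|)^m = a z^m |z|^{1-m}$, which for $m\ge 2$ vanishes only to holomorphic order $1$, not $m$, so you never land in $\cT'$. This is precisely why the paper does not construct a deformation retraction of $\cT_w$ onto $\cT'_w$: instead it separately retracts the local model $G$ (no holomorphicity) onto the path-space model $\{g',h\}$ via radialization, retracts the local model $H$ (holomorphic order $m$) onto $\{g',h : g'=az^m\}$ via \eqref{Tayloragreement}, identifies the inclusion with the natural one, and concludes from the fact that $\Top_m(S^1,\bbC-0)$ deformation retracts onto $\{z\mapsto az^m\}$. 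The weak-equivalence argument sidesteps exactly the difficulty you would run into if you tried to push $\tilde f$ into $\cT'$ by a formula.
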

\begin{proof}
	We show that each of the maps $\cM_w(K) \to \cM_w'(K)$ and $\cT_w'(K) \to \cT_w(K)$ are weak homotopy equivalences.   (This suffices because by \cref{strictransform} the map $\cM_w'(K) \to \cT_w'(K)$ is a homeomorphism with inverse given by post-composition with the projection $Y \to \bbP(V)$).
	
	First consider $\cM_w(K) \to \cM_w'(K)$.  The target parameterizes $w$-holomorphic maps $C \to \bbP(V)$ intersecting $p_i$ to the order prescribed by $w$.     Now $w$-holomorphic maps $C \to \bbP(V)$ correspond to pairs consisting of a $w$-holomorphic line bundle $L$ and $s \in \Gamma(L \otimes V)$ a non-vanishing $w$-holomorphic section,  considered up to isomorphism.  (Here $(L,s)$ is isomorphic to $(L', s')$ if there is a continuous, and therefore $w$-holomorphic, isomorphism of line bundles $L \to L'$ taking $s$ to $s'$).  The fiber $F$ over an element $\cM_w'$ corresponding to a $w$-holomorphic line bundle $L$ and section $s$ parameterizes the data of:
	\begin{itemize}
		\item  $\tilde L$ a holomorphic line bundle 
		\item  $\tilde s \in \Gamma(V \otimes \tilde L)$ a section intersecting $\ell_i$ holomorphically order precisely $w_i$
		\item $f: \tilde L \to L$ a continuous isomorphism of $\bbC$ line bundles, taking $\tilde s$ to $s$, 
	\end{itemize}
considered up holomorphic isomorphism of pairs $(\tilde L, \tilde s)$.   This data is uniquely determined by the isomorphism $f$,  so $F$  parameterizes pairs consisting of $\tilde L$ a holomorphic bundle and $f: \tilde L \to L$ a continuous isomorphism (considered up to holomorphic isomorphisms of $\tilde L$).     In other words it is the space of holomorphic structures on $L$.  The map $\cM_w(K) \to \cM_w'(K)$ is a fiber bundle because there is a cover of $\Top(C, \bbP(V))$ by open subsets $U$ such that the  universal bundle $\ev^* \cO_{\bbP(V)}(1) |_{U \times C}$ is isomorphic to $\pi^* \cO_{C}(d)$.    Here $\ev$ is the evaluation map $ \Top(C, \bbP(V)) \times C \to \bbP(V)$ and $\pi$ is the projection to $C$ and $\cO_C(d)$ is the unique degree $d$ continuous line bundle on $C$.   A choice of isomorphism between the universal bundle over $U \times C$  and $\cO_C(d)$ yields a trivialization over $U \cap \cM_w'(K)$,  identifying the fiber of every point with the space of holomorphic structures on $\cO_C(d)$. 

   To show that $\cM_w(K) \to \cM_w'(K)$ is an equivalence, we prove that the fiber $F$ is contractible. First $F$ fibers over the space of $C^{\infty}$ structures on $L$, which is isomorphic to $\Top(C,\bbC^*)/C^{\infty}(C, \bbC^*)$ because there is one $C^{\infty}$ degree $d$ complex line bundle up to isomorphism. 
   Since the inclusion of $C^\infty$ functions to continuous functions is an equivalence so the space of $C^{\infty}$ structures on $L$ is contractible.  Equipping $L$ with a Riemannian metric, we have that holomorphic structures on $L$ correspond to unitary connections on $L$.   Therefore the fiber over a given $C^\infty$ structure on $L$ is identified with the space of unitary connections (a torsor for the space of $(1,0)$ forms with values in $L$). (See the related discussion at \cite[565]{atiyahbott}). This establishes that $\cM_w(K) \to \cM_w'(K)$ is an equivalence.

	Second we show $\cT_w'(K) \to \cT_w(K)$ is a weak equivalence.  Choosing disjoint disks around each point of $w$  inside of the balls of $K$ (i.e. the closure of a distinguished neighborhood $V \subseteq K$), we reduce to the following local statement.  Let $i \in \{1, \dots, r\}$ $m \in \bbN$.   For a map $D \subseteq \bbC$ the unit disk, and a degree $m$ map $g: \del D \to {\rm Tub}(E_i)-E_i$ the inclusion of mapping spaces between 
    \begin{multline*}
    \{f : D \to {\rm Tub}(E_i), ~ f|_{\del D} = g,~ f\inv(E_i) = 0  ~|\\
    \text{$f$ intersects $E_i$ holomorphically to order $m$ at $0$}\}
    \end{multline*}and $$\{f : D \to {\rm Tub}(E_i), ~ f|_{\del D} = g,~ f\inv(E_i) = 0 \} $$ is an equivalence.

	We fiber both spaces over $E_i$ via the map $f \mapsto f(0)$ to reduce to spaces of maps satisfying the additional condition that $f(0) = p$.   Accordingly we analyze the homotopy type of $$G:= \{f : D \to {\rm Tub}(E_i), ~ f|_{\del D} = g,~ f\inv(E_i) = 0, f(0) = p\}.$$ 
	As in the proof of \cref{Retract_To_Fibers} we may deformation retract  onto the subspace consisting of maps satisfying $\pi(f(x)) = p$ for $|x| \leq 1/2$.   By the Alexander trick applied to the normal slice $N_p$ to $p$, we may retract further to the space of functions that are radial for $|x| \leq 1/2$ (in other words functions satsifying $f(rx) = r f(x)$ for $|x| \leq 1/2$ and $r \in [0,1]$).   This shows that $G$ is  equivalent to $$\{g': S^1 \to N_p - 0, ~  h \text{ homotopy between $g'$ and $g$ }  \}.$$ 
			Next we consider the subspace of $H \subseteq G$ consisting of functions that intersect $E_i$ holomorphically to order $m$ at $0$.  Identifying $N_p$ with an open ball in $\bbC$, we  will show that this subspace is equivalent to $$ \{g': S^1 \to N_p - 0, ~  h \text{ homotopy between $g'$ and $g$ } ~|~ g'(z) = a z^d \text{ for some $a\in \bbC^*$}\}$$ and the inclusion is the natural one.   Because the space of degree $m$ maps $\Top_m(S^1, \bbC - 0)$ retracts to the subspace of maps of the form $z \mapsto a z^m$ where $a \in \bbC^*$, establishing this will complete the proof.   
			
			Since the holomorphicity condition is only imposed in the normal direction to $E_i$, as above we may first deformation retract to the subspace of maps such that $\pi(f(x)) = p$ for $|x| \leq 3/4$.   For such a map, the restriction of $f$ to the radius $3/4$ ball $D_{3/4}$ is determined by the induced map to the normal slice $\tilde f: D_{3/4} \to N_p$.  Then we retract further to the subspace of functions  $f$ that satisfy $\pi(f(x)) = p$ for $|x| \leq 1/2$ and  such that the induced map $\tilde f: D_{1/2} \to N_p \subseteq \bbC$ agrees with its degree $m$  Taylor polynomial.   To do this, we use a rescaling of the following deformation retraction from the space $$\{ j: D \to \bbC ~|~ f\inv(0) = 0,  \text{ $j$ intersects zero holomorphically to order exactly $m$ }\}$$ to the subspace of functions that agree with their degree $m$ Taylor polynomial at the origin on the ball of radius $1/2$: set $j_u(z) = \lambda^{-m} j(\lambda z)$ with $\lambda = \lambda(u, z)$ given by
		
	\begin{equation}\lambda(u, z):= \begin{cases} 1 - u & \text{if } |z| \leq  1/2 \,, \\ 
  			  u (2|z|-1) + (1-u) & \text{if } |z| \geq 1/2 \,.
         \end{cases} \label{Tayloragreement} \end{equation}
  In words, we break the disk into an interior disk of radius $1/2$ and an exterior annulus.  On the interior disk use the $j_u(z) :=(1-u)^{-m} j((1-u) z)$, and on the exterior annulus we linearly interpolate between $j$ and the values on the interior.    The key point is that, because $j$ vanishes to order exactly $m$ at the origin, $\lim_{t \to 0} t^{-m} j(t z)$ equals the degree $m$ Taylor polynomial of $j$.  Because of this, $j_0(z)$ is well-defined and agrees with its degree $m$ Taylor expansion at the origin on the radius $1/2$ disk.
\end{proof}

\subsection{Relating $\cT$ and $\Top^+$}  In this subsection, we recall the dependence of the constructions on a choice of $(d,n)$.  Recall that $\cT_{d,n}$ was defined to be a subset of $W_n \times \Top_{d,n}(C,X)$ given by forgetting the choice of divisors $w \in W_n$.  

\begin{prop}\label{prop:forgetdivisors}
    The projection map $\cT_{d,n} \to \Top_{d,n}(C,X)$ is a homoemorphism onto its image.  
\end{prop}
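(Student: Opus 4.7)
The plan is to identify the image of the projection explicitly and construct a continuous inverse. As a first step, note that for any $(w, f) \in \cT_{d,n}$ the divisor $w_i$ is uniquely recovered from $f$ as the formal sum $\sum_{p \in f^{-1}(E_i)} m_p(f) \cdot p$, where $m_p(f)$ is the (positive integer) local intersection multiplicity of $f$ with $E_i$ at $p$. Hence the projection is injective, and its image is precisely $\Top^+_{d,n}(C,X)$, the total-multiplicity constraint $|w_i| = n_i$ being enforced by the fixed homology class.

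The core of the proof is then to verify that the inverse $\sigma : f \mapsto ((w_i(f))_{i=1}^r, f)$ is continuous, which reduces to showing that $f \mapsto w_i(f) \in \Sym^{n_i}(C)$ is continuous for each $i$. I would argue pointwise: fix $f_0$ with $w_i(f_0) = \sum_j k_j q_j$, choose pairwise disjoint closed disks $D_j \ni q_j$ with $f_0\bigl(C \setminus \bigcup_j \mathrm{int}\, D_j\bigr) \cap E_i = \emptyset$, and observe that the sets $\prod_j \Sym^{k_j}(\mathrm{int}\, D_j)$ form a basis of neighborhoods of $w_i(f_0)$ in $\Sym^{n_i}(C)$. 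Since $E_i$ is closed and $f_0\bigl(C \setminus \bigcup_j \mathrm{int}\, D_j\bigr)$ is compact and disjoint from $E_i$, the compact-open topology furnishes an open neighborhood $U_1$ of $f_0$ on which $f^{-1}(E_i) \subseteq \bigcup_j \mathrm{int}\, D_j$. For such $f$, the sum $\sum_{p \in f^{-1}(E_i) \cap D_j} m_p(f)$ equals the degree of $f|_{\partial D_j}$ viewed as a map into the normal sphere bundle of $E_i$ via a tubular neighborhood retraction; this degree is a homotopy invariant and hence locally constant in $f$, so after shrinking $U_1$ to some $U$ it equals $k_j$ for every $j$. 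Positivity of each $m_p(f)$ then forces $w_i(f) \in \prod_j \Sym^{k_j}(\mathrm{int}\, D_j)$, giving continuity.

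The main obstacle I anticipate is to rigorously identify the local multiplicity $m_p(f)$ used in the definition of $\cT_{d,n}$ and $\Top^+$ with the local topological degree of $f$ at $p$ into the normal sphere bundle of $E_i$. This is the standard intersection theory of a continuous map into a cooriented codimension-two submanifold, but it is the conceptual crux of the argument: it is precisely the additivity and homotopy invariance of this local degree that allows multiplicities to redistribute continuously as preimage points collide, which is what continuity into the symmetric product demands.
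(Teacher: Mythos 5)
Your proposal is correct and takes essentially the same approach as the paper: both recover the divisor from the local intersection multiplicities to get bijectivity onto $\Top^+_{d,n}(C,X)$, and both establish continuity of the inverse by restricting to small circles/disks around the preimage points, using homotopy invariance of the degree into the normal sphere bundle together with positivity of the multiplicities to force the divisor to stay within a prescribed distinguished neighborhood. Your version is slightly more careful in using the compact-open topology to confine $f^{-1}(E_i)$ to the disks, and in flagging the identification of intersection multiplicity with local degree, but neither point marks a real departure from the paper's argument.
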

\begin{proof}
The projection is bijective, because given a function $f: C \to Y$ that intersects $E_i$ positively for $i = 1, \dots, r$ we can recover $D_i$ as the unique divisor supported at $f\inv(E_i)$, whose multiplicity at a point of $f\inv(E_i)$ is the local intersection multiplicity of $f$ with $E_i$.  

	To show that the inverse is continuous,  fix $w \in W_n$ corresponding to a finite labelled subset $T \subseteq C$ and a distinguished open subset of radius $\epsilon$ containing $w$.  Let $f \in {\rm Top}_+(C, Y)_{d,n}$ be a function whose intersection multiplicities with $E_i$ are those specified by $w$.  Consider the union of circles of radius $\epsilon/2$ around each $t \in T$, and the open subspace  consisting of functions $f' \in  {\rm Top}_+(C, Y)_{d,n}$ such that $f'$ differs from $f$ by at most $\delta$ on each circle.  For $\delta$ sufficiently small, we have that for every $f'$ in this subspace, its restriction to each radius $\epsilon/2$ circle is homotopic to the restriction of $f'$ (considered as a map to $Y-\cup_{i} E_i$).  In particular, the intersection multiplicity of $f'$ on the interior of each circle agrees with that of $f'$.  Since $f'$ only intersects $E_i$ positively, it follows that the divisor associated to $f'$ is contained in the distinguished open subset of $w$ of radius $\epsilon/2$.  Hence the inverse is continuous.
\end{proof}

Since by definition, the image of $T_{d,n}$ under the projection map is $\Top_{d,n}^+(C,X)$, we have that \cref{prop:forgetdivisors,thm:posmapsoverbase} yield the following.  (We also include the pointed variant, which follows from arguments identical to the ones in this section.)

\begin{thm} \label{thm:posmaps}
    There is a weak equivalence $\cM_{d,n} \simeq \Top_{d,n}^+(C,X)$ and $\cM_{d,n,*} \simeq \Top_{d,n,*}^+(C,X)$.
\end{thm}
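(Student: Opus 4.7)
The plan is to simply assemble \cref{thm:posmapsoverbase} and \cref{prop:forgetdivisors} into the desired weak equivalence, and then observe that the pointed version of the argument goes through verbatim.

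First I would invoke \cref{thm:posmapsoverbase} to obtain a weak homotopy equivalence $\cM_{d,n} \to \cT_{d,n}$, where $\cT_{d,n} \subseteq W_n \times \Top^{+}_{d,n}(C,X)$ is the auxiliary space carrying the data of divisors $w \in W_n$. Next I would apply \cref{prop:forgetdivisors}, which asserts that forgetting $w$ defines a homeomorphism from $\cT_{d,n}$ onto its image in $\Top_{d,n}(C,X)$. By the very definition of $\cT_{d,n}$, this image is precisely $\Top^{+}_{d,n}(C,X)$ (a map $f \in \Top^{+}_{d,n}(C,X)$ determines and is determined by the discrete divisor of local intersection multiplicities with each $E_i$, which by the positivity condition sums to $n_i$). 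Composing yields the unpointed statement $\cM_{d,n} \simeq \Top^{+}_{d,n}(C,X)$.

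For the pointed variant, I would redo the constructions with basepoints fixed: define $\cM_{d,n,*}$ and $\cT_{d,n,*}$ as before but requiring all continuous sections/maps to respect the distinguished basepoints $*_C \in C$ and $*_X \in X - \bigcup_i E_i$. Since $*_X$ lies away from the exceptional divisors, the pointed basepoint condition is preserved by every step of the arguments in \cref{sec:posmapscompare} (strict transform, the retraction to fibers in \cref{Retract_To_Fibers}, the Alexander-trick retractions, the argument that the fibers of $\cM_w(K) \to \cM'_w(K)$ are spaces of holomorphic structures, and the computation of $G$ as a space of boundary conditions). In particular, none of the local retractions move the basepoint, so they descend to retractions of the corresponding pointed subspaces. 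The analog of \cref{prop:forgetdivisors} is immediate, since the divisor reconstruction is unaffected by pinning down $*_C \mapsto *_X$.

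There is no serious obstacle here since the pieces are already in place; the only mild point to check is that the hypercover argument in the local-to-global reduction is compatible with the basepoint condition, which follows because the distinguished opens $\cT_V(K)$ descend to open subsets of the pointed mapping space after restricting to maps sending $*_C$ to $*_X$, and Dugger--Isaksen's theorem still applies. Thus both weak equivalences in \cref{thm:posmaps} are obtained.
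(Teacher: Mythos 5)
Your proposal matches the paper's proof exactly: the paper likewise combines \cref{thm:posmapsoverbase} and \cref{prop:forgetdivisors}, observes that the image of the projection is $\Top^+_{d,n}(C,X)$ by definition, and dismisses the pointed case as following from identical arguments. Your added elaboration on why each step of \cref{sec:posmapscompare} respects the basepoint is a reasonable fleshing-out of that dismissal, not a departure from it.
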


\section{Unobstructedness}
\label{sec:unobstructedness}
In this section, we verify the hypotheses of \cref{semitopcompare} in several cases, and consequently deduce our main results.  Establishing these hypotheses boils down to proving that certain section spaces are unobstructed.  There is a range where section spaces are always unobstructed by a simple application of Riemann--Roch, which we consider in \cref{RRrange}.  If the points $p_1, \dots, p_{\dim V}$ are in linearly general position, then this range can be improved slightly: we do this in \cref{GPrange}.  Finally in \cref{deg5} we consider the case of the degree 5 del Pezzo surface, where we apply the results of \cref{GPrange} and the symmetry under $S_5$ to improve the range to a dense open subset of the ample cone.   

Throughout this section we let $V$ be a vector space of dimension $v \geq 3$, containing lines $\ell_1, \dots, \ell_r$  corresponding to $p_1, \dots, p_r \in \bbP(V)$.  We let ${Q_r}$ denote the poset $\{0, \ell_1, \dots, \ell_r, V\}$, and let $C$ be a fixed smooth proper genus $g$ curve.

\subsection{Riemann--Roch}\label{RRrange} First we apply Riemann--Roch directly to construct a poset to which we can apply Theorem \ref{semitopcompare}.   We make our statements using the multiplicity functions $m_{\ell_j}, m_0$ of \cref{combinatorialfunctions}.  We will set $m_\ell:= \sum_{j = 1}^r m_{\ell_j}$.  Recall that given $x \in {Q_r}^{\rJ C}$ the expected codimension of $\Gamma(C, L \otimes V)_x$ in $\Gamma(C, L \otimes V)$ is $\gamma(x) = 2(\dim V m_0(x) + (\dim V - 1) m_\ell(x))$.   We have the following simple consequence of Riemann--Roch.

\begin{prop}\label{RRoch}
	Let $C$ be a smooth projective genus $g$ curve.  Let  $x \in {Q_r}^{\rJ C}$.  If $d - m_0(x) - m_{\ell}(x) \geq 2g - 1$, then $\Gamma_{Q_r}(C, L \otimes V)_x$ is unobstructed.  
\end{prop}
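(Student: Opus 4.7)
The plan is to realize $\Gamma_{Q_r}(C, L \otimes V)_x$ as the global sections of a coherent subsheaf $\cE \subseteq L \otimes V$ whose cokernel has length equal to the expected complex codimension $\gamma(x)/2$, and then deduce unobstructedness from the vanishing $H^1(\cE) = 0$, proved by comparing $\cE$ with a locally free sheaf of the form $L \otimes V(-D)$ and invoking Serre duality.

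Because $x \in Q_r^{\rJ C}$ is saturated, at each $c \in \supp(x)$ the chain $f_c$ has the shape $m_0^{(c)} \cdot 0 + m_{\ell_{j_c}}^{(c)} \cdot \ell_{j_c}$ for some $j_c \in \{1,\dots,r\}$ (see \cref{mainex2}), where we allow $m_{\ell_{j_c}}^{(c)} = 0$. Choosing local splittings $V = \ell_{j_c} \oplus V^{\perp}$ near each such $c$, the incidence conditions $s(D_{\ell_j}) \subseteq L \otimes \ell_j$ and $s(D_0) = 0$ translate to: the $\ell_{j_c}$-component of $s$ vanishes to order $\geq m_0^{(c)}$ at $c$, while the $V^{\perp}$-component vanishes to order $\geq m_0^{(c)} + m_{\ell_{j_c}}^{(c)}$ at $c$ (the conditions at $D_{\ell_j}$ for $j \ne j_c$ being subsumed by the condition at $D_0$). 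These local vanishing conditions glue to a subsheaf $\cE \subseteq L \otimes V$ with $H^0(\cE) = \Gamma_{Q_r}(C, L \otimes V)_x$, and writing $v = \dim V$ the length of the quotient $Q := (L \otimes V)/\cE$ is
\[
\mathrm{length}(Q) = \sum_{c \in \supp(x)} \bigl[m_0^{(c)} + (m_0^{(c)} + m_{\ell_{j_c}}^{(c)})(v-1)\bigr] = v \cdot m_0(x) + (v-1)\, m_\ell(x) = \gamma(x)/2,
\]
so it equals the expected complex codimension.

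From the short exact sequence $0 \to \cE \to L \otimes V \to Q \to 0$, unobstructedness is equivalent to surjectivity of the evaluation $H^0(L \otimes V) \twoheadrightarrow H^0(Q)$, for which $H^1(\cE) = 0$ suffices. Set $D := \sum_{c} (m_0^{(c)} + m_{\ell_{j_c}}^{(c)}) \cdot c$, so $\deg D = m_0(x) + m_\ell(x)$; any section of $L \otimes V$ vanishing to order $\geq m_0^{(c)} + m_{\ell_{j_c}}^{(c)}$ at every $c$ satisfies the weaker incidence condition defining $\cE$, giving an inclusion $L \otimes V(-D) \hookrightarrow \cE$ whose cokernel is a torsion sheaf. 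The long exact sequence reduces the required vanishing to $H^1(L \otimes V(-D)) \cong H^1(L(-D))^{\oplus v} = 0$, which by Serre duality holds as soon as $\deg L(-D) = d - m_0(x) - m_\ell(x) > 2g - 2$, i.e., $\geq 2g - 1$. This is precisely the hypothesis. The only step requiring any care is the local construction of $\cE$ and the length computation at each point of $\supp(x)$; the remainder is standard cohomological bookkeeping.
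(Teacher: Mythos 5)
Your proof is correct, and it supplies the argument the paper leaves implicit (the paper simply asserts \cref{RRoch} as ``a simple consequence of Riemann--Roch'' with no written proof). The structure you use---realize the incidence conditions as $H^0$ of a coherent subsheaf $\cE \subseteq L\otimes V$ with torsion cokernel of length $\gamma(x)/2$, reduce unobstructedness to $H^1(\cE)=0$, and bound $H^1(\cE)$ by sandwiching $L\otimes V(-D) \hookrightarrow \cE$ and applying Riemann--Roch to $L(-D)$---is the same sheaf-theoretic strategy the authors use for the sharper \cref{unobstructedness} in the linearly-general-position case, just without the extra bookkeeping needed there. Two small things worth saying explicitly if you write this up: (i) the identification of $\Gamma_{Q_r}(C, L\otimes V)_x$ with $H^0(\cE)$ uses that $x$ is saturated, so that at each support point only one line $\ell_{j_c}$ is ``active'' and the conditions from the other $D_{\ell_j}$ are absorbed into the $D_0$ condition, as you note; and (ii) the step ``unobstructedness $\iff$ surjectivity of $H^0(L\otimes V)\to H^0(Q)$'' silently uses that the expected complex codimension equals $\mathrm{length}(Q) = h^0(Q)$, which you verified but is the crux of why $H^1(\cE)=0$ is the right vanishing to prove.
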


\begin{prop}\label{posetconstruction} Let $d \in \bbN ,n \in \bbN^r$, and let $$I = d  -  \sum_{i =1}^r n_i -2g$$
 Let $W \subseteq \prod_{i = 1}^{r} \Sym^{n_i} C$ be the locus of pairwise disjoint divisors.  Consider the subset $P \subseteq (W < \Hilb(C)^{Q_r})$ consisting of $w<x$  that satisfy $m_{0}(x) + m_{\ell}(x) \leq I + \sum_{i= 1}^r n_i$, then 

		\begin{enumerate}
			\item $P$ is proper over $W$ and downward closed
   \item for all $w<x \prec y$, with $(w < x) \in P$ and every $L \in \Pic^d(C)$ the fiber $\Gamma_{{Q_r}}(C,L \otimes V)_{y}$ is unobstructed. 
			\item $P$ contains every type $T$ with $\kappa(T) \leq I$.
		\end{enumerate}
		
\end{prop}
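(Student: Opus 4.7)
The plan is to verify the three conditions in turn, relying on the identity
\[
 m_0(x) + m_\ell(x) = \sum_{c \in C} \max_{i}\bigl(\mathrm{mult}_c(x_{\ell_i})\bigr)
\]
which holds for saturated $x \in Q_r^{\rJ C}$: at a point $c$ where the chain is $m_0\, 0 + m_{\ell_{j}}\, \ell_j$, the multiplicity of $x_{\ell_i}$ at $c$ is $m_0 + m_{\ell_j}$ when $i=j$ and $m_0$ otherwise, so the maximum coincides with $m_0|_c + m_\ell|_c$. The right side is manifestly monotone in the divisor order, which gives (1): since $\sat$ is order-preserving, $P$ is downward closed; the defining inequality is closed; and the bound $m_0 + m_\ell \leq I + \sum n_i$ confines $P$ to a finite union of closed strata, each sitting inside a proper $\prod_i \Sym^{k_i}(C) \times W$ over $W$.

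For (2), I will enumerate the covers $x \prec y$ in $Q_r^{\rJ C}$. At a single point they are of one of three kinds: (I) incrementing $m_{\ell_j}$ by $1$; (II) a shift replacing the local chain $n \ell_j$ by $(n-1)\ell_j + 0$; or (III) adding a new support point with chain $\ell_j$. Other candidate moves are not covers---for instance $V \to 0$ factors through $V \to \ell_j \to 0$, and likewise $m_0\cdot 0 \to (m_0+1)\cdot 0$ is refined by passing through a chain with an intermediate $\ell_j$. In each of (I)--(III) the max-formula shows $m_0(y)+m_\ell(y) - m_0(x) - m_\ell(x) \in \{0,1\}$. Combined with $(w<x) \in P$, this yields $m_0(y) + m_\ell(y) \leq d - 2g + 1$, i.e.\ $d - m_0(y) - m_\ell(y) \geq 2g - 1$, so Riemann--Roch (\cref{RRoch}) delivers unobstructedness of $\Gamma_{Q_r}(C, L \otimes V)_y$ for every $L \in \Pic^d(C)$.

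For (3), set $a = m_\ell(x) - \sum n_i$ and $b = m_0(x)$; applying the max formula to $x \geq w$ gives $a+b \geq 0$. Unwinding the definitions produces $\gamma(w<x) = 2(v-1)a + 2v\, b$ and $\rank(w<x) = 2b + a$, where $v = \dim V$. A case-by-case check verifies that the local rank contribution at each $c \in \supp(w<x)$ is at least $1$ (for instance, a type (II) shift contributes exactly $1$, the $V \to \ell_j$ move contributes $1$, and the $m_0\cdot 0 + m_{\ell_j}\cdot \ell_j \to (m_0+1)\cdot 0 + m_{\ell_j}\cdot \ell_j$ move contributes $2$), so $\supp(w<x) \leq \rank(w<x)$. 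Combining,
\[
\kappa(T) = \gamma(w<x) - \rank(w<x) - \supp(w<x) \geq \gamma(w<x) - 2\rank(w<x) = (2v-4)(a+b) \geq 2(a+b),
\]
using $v \geq 3$. Thus $\kappa(T) \leq I$ implies $a+b \leq I/2 \leq I$ (vacuously so when $I < 0$, since $a+b \geq 0$), which is exactly the condition $m_0(x) + m_\ell(x) \leq I + \sum n_i$ defining membership in $P$.

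The bulk of the work is the combinatorial bookkeeping, particularly the cover enumeration for (2) and the inequality $\supp \leq \rank$ for (3); both reduce to a small case check on how chains in $\Ch(Q_r)$ interact with saturation. Everything else follows directly from Riemann--Roch and the framework of the preceding sections.
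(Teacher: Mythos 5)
Your proof is correct, and the organizing observation is a genuine improvement in transparency over the paper's route. The paper disposes of (1) by invoking \cref{closednessprop}, which is proved by a somewhat involved superadditivity argument on chains (and is stated for a range of weights $J \in (1/2,1]$ because it is reused in \cref{genposposetconstruction}); it disposes of (3) by citing \cref{ranklemma}, which is proved by a case analysis showing $\kappa$ strictly increases along covers. You instead put the single identity $m_0(x) + m_\ell(x) = \sum_{c}\max_i \operatorname{mult}_c(x_{\ell_i})$ at the center: monotonicity and lower semicontinuity of this ``degree of the supremum divisor'' give downward closedness and closedness in (1) immediately (this is essentially what \cref{closednessprop} at $J=1$ encodes, but the $\max$-identity makes it a one-liner), and the explicit computation $\kappa \geq (2v-4)(a+b) \geq 2(a+b)$ in (3) replaces \cref{ranklemma}. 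For (2) you spell out the cover enumeration, which the paper leaves implicit behind ``follows from \cref{RRoch}.'' Two small gaps to smooth out if you write this up: you should say explicitly that the $\max$-identity extends from saturated $x$ to all of $\Hilb(C)^{Q_r}$ because $m_0, m_\ell$ are defined via $\sat$ and $\max_i \operatorname{mult}_c$ is unchanged by saturation; and in the cover enumeration for (2) your type (II) is written only for local chain $n\ell_j$, whereas the general $m_0$-increment cover $m_0\,0 + m_{\ell_j}\ell_j \prec (m_0+1)\,0 + (m_{\ell_j}-1)\ell_j$ (and the degenerate $(m_0, M, j) \prec (M, M, -)$ when $m_0 + 1 = M$) should be listed too; they still contribute delta $0$, so the conclusion is unaffected.
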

\begin{proof} 
	For $(1)$, it suffices to show that the subset of $\Hilb(C)^{Q_r}$ consisting of $x$ satisfying $m_0(x) + m_{\ell}(x) \leq I$ is closed for every $j = 1, \dots v$.  This follows from \cref{closednessprop}.
	
	Condition (2) follows from \cref{RRoch}.
		For  (3) suppose that $w < x \in \cS_T$ with $\kappa(T) \leq I$.  
		Then $\rank(w<x) \leq I$ by \cref{ranklemma}.  Now $\rank(w<x) = 2 m_0(x) + m_\ell(x) - \sum_{i = 1}^{r} n_i$.  Since $m_0(x) \geq 0$, we obtain that $m_0(x) + m_\ell(x) \leq I - \sum_{i=1}^r n_i $. 
\end{proof}

\begin{lem}\label{closednessprop}
 Let $J \in (1/2, 1]$, and $I \in \bbR$. Then the set of $x \in \Hilb(C)^{Q_r}$ such that $m_{ 0}(x) + J m_{\ell}(x)  \leq I$ is  compact and downward closed.  
\end{lem}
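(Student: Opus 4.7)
The plan is to rewrite $f(x) := m_0(x) + J m_\ell(x)$ explicitly as a sum over $c \in C$ and then separately verify that its sublevel sets are (i) downward closed, (ii) degree-bounded, and (iii) closed in the analytic topology.

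First, express $f$ in terms of multiplicities: for $c \in C$ set $a_i := \mult_c(D_{\ell_i})$ and let $M_c, S_c$ denote the largest and second-largest values in $(a_i)_{i=1}^r$ (with $S_c := 0$ if $r = 1$). The description of $\sat$ in \cref{mainex2} shows that the saturated chain at $c$ contributes $S_c$ to $m_0$ and $M_c - S_c$ to $m_\ell$, so
\[ f(x) = \sum_{c \in C} \bigl( J M_c + (1-J) S_c \bigr). \]
Monotonicity of $f$ under $x' \le x$ is then immediate from the pointwise inequalities $a'_i \le a_i$, which give $M'_c \le M_c$ and $S'_c \le S_c$ and hence $f(x') \le f(x)$. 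This handles downward-closedness of $\{f \le I\}$. Degree-boundedness follows from $f(x) \ge J \sum_c M_c = J \deg\bigl(\bigvee_i D_{\ell_i}\bigr) \ge J \deg D_{\ell_i}$ (and likewise $\deg D_0 \le \deg D_{\ell_i} \le I/J$), placing $\{f \le I\}$ inside a compact union of symmetric products in $\Hilb(C)^{Q_r}$.

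The remaining ingredient, which I expect to carry the main content, is lower semicontinuity of $f$. Given a convergent sequence $x_n \to x$, I localize around each $c \in \supp(x)$ to small disjoint disks on $C$ and reduce to the following elementary inequality: for non-negative integers $\{a_{i,j}\}_{i \in [r], j \in [k]}$ with column sums $a_i := \sum_j a_{i,j}$, writing $M_j, S_j$ for the first two maxima of the $j$-th column and $M, S$ for those of the vector $(a_i)_i$,
\[
\sum_{j} \bigl( J M_j + (1-J) S_j \bigr) \;\ge\; J M + (1-J) S.
\]
To prove this, pick indices $i^\ast, i^{\ast\ast}$ realizing $M, S$ and combine (a) $\sum_j M_j \ge \sum_j a_{i^\ast, j} = M$ with (b) $M_j + S_j \ge a_{i^\ast, j} + a_{i^{\ast\ast}, j}$ for each $j$, the latter because the two largest entries of any column dominate any two fixed entries of that column. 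Summing (b) gives $A + B \ge 0$ for $A := \sum_j M_j - M \ge 0$ and $B := \sum_j S_j - S$, so
\[
JA + (1-J)B \;\ge\; (2J-1)\max(0,-B) \;\ge\; 0,
\]
the last step using the hypothesis $J > 1/2$. The example $r = 2$, $D_{\ell_1,n} = [c]$, $D_{\ell_2,n} = [c'_n]$ with $c'_n \to c$ realizes $f(x_n) = 2J$ and $f(x) = 1$, showing the threshold is sharp.
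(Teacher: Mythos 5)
Your proof is correct and follows the same overall strategy as the paper: (i) use monotonicity of $E := m_0 + J m_\ell$ for downward closedness, (ii) observe a degree bound to confine the sublevel set to a compact union of symmetric products, and (iii) prove a per-point ``merging'' (sub\-additivity) inequality, with the constraint $J \ge 1/2$ entering at exactly the same place. The paper proves (iii) by first reducing to saturated $g_1, g_2$ via $\sat(g_1 + g_2) \le \sat(\sat g_1 + \sat g_2)$, then doing an explicit two-summand case analysis ending in the condition $(2J-1)(a_1 - a_2) \ge 0$ (its displayed intermediate formulas contain sign/index typos, but the conclusion is right). Your argument is a cleaner and more conceptual version of this: the explicit formula $E(x) = \sum_c \bigl(J M_c + (1-J) S_c\bigr)$ makes the role of $J$ transparent, and the proof of the $k$-summand inequality from ``$A := \sum_j M_j - M \ge 0$ and $A + B \ge 0$ imply $JA + (1-J)B \ge 0$ whenever $J \in [1/2, 1]$'' is a nice packaging; the paper's version only treats $k = 2$ directly (which of course iterates). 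Your sharpness example correctly illustrates why $J \ge 1/2$ is forced; note it actually shows that $J = 1/2$ is still fine (the inequality degenerates to equality there), so the hypothesis $J > 1/2$ in the lemma is convenient rather than necessary.

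One small slip: your formula sets $S_c := 0$ when $r = 1$, but in that case every element of $\Hilb(C)^{Q_1}$ is already saturated, $m_0(x) = \deg D_0$, and the correct local contribution is $J\,\mult_c D_{\ell_1} + (1-J)\,\mult_c D_0$; your convention drops the $(1-J)\,\mult_c D_0$ term. The clean fix is to set $S_c := \max\bigl(\mult_c D_0,\ \text{second-max of }(a_i)_{i=1}^r\bigr)$, which agrees with your definition when $r \ge 2$ (since $\mult_c D_0 \le \min_i a_i$) and handles $r = 1$ correctly. The lemma is trivial for $r = 1$ anyway — the per-point contribution is then linear in the multiplicities, hence additive under merging — so the conclusion is unaffected, but the formula as written is not quite right in that case.
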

\begin{proof}
Downward closure is immediate, because the function $E:= m_{ 0} + J m_{\ell}$ is increasing.  To show compactness it suffices to show that the set of $x$ such that $E(x) \leq I$ is closed: since if $m_{0}(x) + J m_{\ell}(x)  \leq I$   then $x \in (\Hilb(C)_{\leq \max(I, I/J)})^{r+1}$.  

For closedness, suppose that $g_1, g_2: {Q_r} \to \bbN$ are two functions.   We claim that $$m_{0}(g_1) + J m_{\ell}(g_1) + Jm_{0}(g_2) + m_{\ell}(g_2)  \geq m_{0}(g_1 + g_2)  + J m_{\ell}(g_1 + g_2),$$ (i.e. that $E(g_1 + g_2) \leq E(g_1) + E(g_2)$).  This claim establishes closedness, because it implies that collection of combinatorial types satsifying the the condition $m_{ 0}(T) + J m_{\ell}(T) \leq I$ is downward closed with respect to the partial order $\leq_+$.  

Because $g \leq \sat(g)$ for all $g$, we have that $\sat(g_1 + g_2) \leq \sat(\sat(g_1) + \sat(g_2))$.  Since $E(g) = E(\sat(g))$, we have  $E(g_1 + g_2) \leq E(\sat(g_1) + \sat(g_2))$,  so it suffices to establish the claim with $g_1, g_2$ replaced by their saturations.  

So suppose that $g_1, g_2$ are saturated and correspond to the chains  $(a_1 - a_2) \ell_i + a_2$ and $(b_1 - b_2) \ell_j + b_2$  for $a_1 \geq a_2$ and $b_1 \geq b_2$ and $i,j \in \{1, \dots r\}$.   If $i = j$,  then $E(g_1 + g_2) = E(g_1) + E(g_2)$.    Otherwise, suppose without loss of generality that $a_2 + b_1 \geq a_1 + b_2$.  Then the saturation of $g_1 + g_2$  corresponds to the chain $$(a_1 + b_1 - a_2 - b_1)  \ell_{k} + (a_1 + b_2) 0,$$ where $k = i$ or $j$  (depending on $\max(a_1, b_1)$).   Thus we have $$E(g_1 + g_2) = (a_1 + b_1 - a_2 - b_1)J + (a_1 + b_2) $$ and $$E(g_1) + E(g_2) = (a_2 - a_1 + b_2 - b_1) J  + a_2 + b_2.$$  Simplifying, the inequality $E(g_1 + g_2) \leq E(g_1) + E(g_2)$ reduces to $(2J - 1)(a_1 - a_2) \geq 0$,  which holds by our assumption on $J$.
\end{proof}

\begin{lem} \label{ranklemma}
	We have that $\rank(x) \leq \kappa(x)$ for all $x \in \Hilb(C)^{Q_r}$.
\end{lem}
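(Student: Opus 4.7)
The plan is to reduce the global inequality to a pointwise inequality at each point of the support of $x$. Each of the functions $\rank$, $\gamma$, $\supp$ on $\Hilb(C)^{Q_r}$ is defined by precomposition with $\sat : \Hilb(C)^{Q_r} \to Q_r^{\rJ C}$, so I would first replace $x$ by $\sat(x)$ and assume $x \in Q_r^{\rJ C}$. Such an $x$ corresponds to a finitely supported function $c \mapsto f_c$ with $f_c \in \Ch(Q_r)$, and each of $\rank(x)$, $\gamma(x)$, $\supp(x)$ is obtained as the sum over $c$ of the corresponding value at $f_c$. Writing the desired inequality $r(x) \leq \kappa(x)$ in the form $2\rank(x) + \supp(x) \leq \gamma(x)$, it therefore suffices to establish the pointwise bound
\[ 2\rank(f) + 1 \leq \gamma(f) \]
for every nontrivial chain $f \in \Ch(Q_r)$.

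Next I would classify the nontrivial chains. The only chains in $Q_r$ from $0$ to $\hat 1 = V$ are $0 < V$ and $0 < \ell_j < V$ for $j \in \{1,\dots,r\}$, so every nontrivial $f \in \Ch(Q_r)$ has the form $f = m_0 \cdot 0 + m_{\ell_j} \cdot \ell_j$ for some $j$ and nonnegative integers $m_0, m_{\ell_j}$ with $m_0 + m_{\ell_j} \geq 1$. Setting $v := \dim V$ and using $\rank(0) = 2$, $\rank(\ell_j) = 1$, $\gamma(0) = 2v$, $\gamma(\ell_j) = 2(v-1)$, the extension formula yields $\rank(f) = 2m_0 + m_{\ell_j}$ and $\gamma(f) = 2v\,m_0 + 2(v-1)m_{\ell_j}$.

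The pointwise inequality thus reduces to $4m_0 + 2m_{\ell_j} + 1 \leq 2v\,m_0 + 2(v-1) m_{\ell_j}$, or equivalently
\[ (2v - 4)(m_0 + m_{\ell_j}) \geq 1. \]
Under the standing hypothesis $\dim V \geq 3$ one has $2v - 4 \geq 2$, and nontriviality of $f$ forces $m_0 + m_{\ell_j} \geq 1$, so the inequality holds with room to spare. I do not anticipate any serious obstacle here: once one has isolated the pointwise statement and enumerated the chains of $Q_r$, the argument is a direct arithmetic check, with the only mildly delicate point being to keep consistent indexing conventions for the extension of functions from $Q_r$ to $\Ch(Q_r)$.
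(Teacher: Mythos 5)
Your proof is correct. The reduction to a pointwise statement is valid because $\rank$, $\gamma$, and $\supp$ are each additive over the support and factor through $\sat$, so writing $r(x) \leq \kappa(x)$ as $2\rank(x) + \supp(x) \leq \gamma(x)$ and checking $2\rank(f_c) + 1 \leq \gamma(f_c)$ at each nontrivial chain $f_c$ is exactly what is needed. Your enumeration of chains in $Q_r$ and the resulting identity $(2v-4)(m_0 + m_{\ell_j}) \geq 1$ are both right, and the standing assumption $\dim V \geq 3$ is precisely what makes it go through.

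This is a genuinely different route from the paper's. The paper argues inductively: it asserts (via a short case analysis) that $\kappa(x_2) \geq \kappa(x_1) + 1$ whenever $x_1 \prec x_2$ is a covering relation in $Q_r^{\rJ C}$, and then sums this along a maximal chain from the bottom element to $\sat(x)$, which by definition has length $r(x)$. You instead classify the possible pointwise chains and verify the inequality by a closed-form arithmetic computation. The two arguments contain essentially the same arithmetic content (the covering-relation case analysis in the paper amounts to checking that each of the three possible one-step changes to $(m_0, m_{\ell_j})$ or the support increases $\kappa$ by at least one). The paper's version is more modular, in that one would only need to re-examine covers if the functions $\gamma$, $\rank$, $\supp$ were altered; yours is more explicit and makes the dependence on $\dim V \geq 3$ completely transparent.
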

\begin{proof}
	If $x_1 \prec x_2 \in {Q_r}^{\rJ C}$, then it follows from direct computation that $\kappa(x_2) \geq \kappa(x_1) + 1$.  Indeed, observe that $\kappa(x)  = (2v-3)m_{\ell}(x) + (2v-2) m_{0}(x) + 2 |{\rm Supp}(x)|$ and $|{\rm Supp}(x)|$ can increase by at whereas at least one of $m_0(x)$  or $m_\ell(x)$ must increase by one.  Since $v \geq 3$ the inequality follows.
 
 Thus a maximal chain of elements of ${Q_r}^{\rJ C}$ ending in $\sat(x)$ (which must have length $\rank(x)$), we obtain that $\kappa(x) \geq \rank(x)$.
\end{proof}

\subsection{Points in general position}\label{GPrange}

Write $v = \dim V$.   In this subsection, we assume that the points $p_1, \dots, p_{\min(v,r)} \in \bbP(V)$ are in linearly general position. In this case, we have the following improvement on Riemmann--Roch.   

\begin{prop}\label{unobstructedness}
 Let $d \in \bbN$ and let  $x \in {Q_r}^{\rJ C}$, with  $m_{\ell_i}(x) = n_i$.   If $- m_{\ell_j}(x) + m_\ell(x)  \leq d-m_0(x) + 2 -2g$ for all $j = 1, \dots, \min(v,r)$, then $\Gamma_{Q_r}(C,L^v)_x$ is unobstructed for any $L \in \Pic^d(C)$.
	
	\end{prop}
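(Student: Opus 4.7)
The plan is to translate unobstructedness into a Riemann--Roch vanishing statement on $C$, and then exploit the linearly general position of $p_1,\ldots,p_{\min(v,r)}$ to diagonalize the relevant sheaf. By saturation, at each support point $c$ of $x$ there is a unique active line $\ell_{j_c}$ so that the chain at $c$ reads $m_{0,c}\cdot 0 + m_{\ell_{j_c},c}\cdot \ell_{j_c}$. Let $\cF\subseteq L\otimes V$ be the subsheaf of sections satisfying all incidence conditions imposed by $x$; the quotient $(L\otimes V)/\cF$ is a torsion sheaf of complex length equal to the expected codimension $\gamma(x)/2 = v\,m_0(x)+(v-1)\sum_i n_i$. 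Thus unobstructedness is equivalent to the induced map $H^1(\cF)\to H^1(L\otimes V)$ being injective, i.e.\ to $H^0(L\otimes V)\twoheadrightarrow H^0((L\otimes V)/\cF)$.

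Choose a basis $\ell_1,\ldots,\ell_v$ of $V$ extending $\ell_1,\ldots,\ell_{\min(v,r)}$ (possible by general position), write $s=\sum_\mu s_\mu\ell_\mu$ with $s_\mu\in\Gamma(L)$, and set $D_0:=\sum_c m_{0,c}\,c$ and $B_i:=\sum_{c:\,j_c=i} m_{\ell_i,c}\,c$, of degrees $m_0(x)$ and $n_i$ respectively (with disjoint supports). At a point $c$ with $j_c\leq v$, the incidence condition decouples cleanly: each $s_\mu$ with $\mu\neq j_c$ must vanish to order $m_{0,c}+m_{\ell_{j_c},c}$ at $c$, while $s_{j_c}$ need only vanish to order $m_{0,c}$. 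When every $j_c\leq v$---which covers $r\leq v$---this yields a global direct sum
\[
\cF\;\cong\;\bigoplus_{\mu=1}^v \ell_\mu\otimes M_\mu,\qquad M_\mu:=L\Bigl(-D_0-\sum_{i\neq\mu}B_i\Bigr),
\]
so $H^1(\cF)=\bigoplus_\mu H^1(M_\mu)$ and its map into $V\otimes H^1(L)$ is diagonal. Each $H^1(M_\mu)\to H^1(L)$ is surjective from the long exact sequence of $0\to M_\mu\to L\to L|_{D_0+\sum_{i\neq\mu}B_i}\to 0$, and injectivity is equivalent to $H^0(L)\twoheadrightarrow H^0(L|_{D_0+\sum_{i\neq\mu}B_i})$. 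The hypothesis $-m_{\ell_j}(x)+m_\ell(x)\leq d-m_0(x)+2-2g$ applied to $j=\mu$ gives $\deg M_\mu = d-m_0-\sum_{i\neq\mu}n_i\geq 2g-2$, which is precisely the Riemann--Roch threshold (via Serre duality on $\omega_C\otimes L^{-1}(D_0+\sum_{i\neq\mu}B_i)$) for this surjectivity, yielding $H^1(\cF)\hookrightarrow H^1(L\otimes V)$ and hence unobstructedness.

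The case of points $c$ with $j_c>v$, which arises when $r>v$, is the main technical obstacle to this diagonal argument: writing $\ell_{j_c}=\sum_\mu c_\mu\ell_\mu$ with all $c_\mu\neq 0$ by general position, the condition at $c$ couples the basis components, demanding that combinations $s_\mu-(c_\mu/c_{\mu_0})s_{\mu_0}$ vanish to high order rather than each $s_\mu$ individually. I would tackle this by a local change of coordinates at each such $c$ that uses the nonvanishing of every $c_\mu$ to treat $\ell_{j_c}$ as a local basis vector and diagonalize the incidence condition in a neighborhood of $c$. Patching these local decompositions into a global direct sum of line bundles on $C$ whose degrees still satisfy the Riemann--Roch bound derived from the hypothesis---and verifying that no extraneous cohomology is introduced in the patching---is the delicate step; the fact that the hypothesis ranges over all $j\leq \min(v,r)$ is what supplies the flexibility needed in each coordinate direction simultaneously.
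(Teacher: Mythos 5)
Your reduction to a statement about $H^1$ and your treatment of the case $r \leq v$ match the paper: the subsheaf $\cF$ of sections obeying the incidence conditions is a direct sum $\bigoplus_{\mu=1}^v L(-D_0 - \sum_{i\neq\mu} U_i)$ of twists of $L$, each of whose $H^1$ vanishes by Riemann--Roch under the stated degree hypothesis.

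The gap is in the case $r > v$. The plan you sketch---locally changing the basis of $V$ at each support point $c$ with $j_c > v$ so that $\ell_{j_c}$ becomes a coordinate, then ``patching these local decompositions into a global direct sum of line bundles''---does not work as stated: the basis change depends on $c$, and the incidence conditions at different points genuinely use incompatible splittings of $V$, so there is no global direct sum decomposition of $\cF$ into line subbundles of $L\otimes V$. Nothing in ``general position'' repairs this; the obstruction is not cohomological in the patching but structural in the sheaf.

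The paper avoids this by replacing the desired direct sum with a short exact sequence. Keeping the coordinate basis $\ell_1,\dots,\ell_v$ fixed, it realizes $\cF$ as the fiber product of $\prod_{j=1}^v L(-D_0 - \sum_{i\neq j, i\leq v}U_i) \xrightarrow{p} \prod_{b>v} (L^v(-D_0))|_{U_b}$ with $\prod_{b>v}\ell_b$, where $p$ is surjective (as a map of sheaves). This gives
\[
0 \;\longrightarrow\; \prod_{j=1}^v L\Bigl(-D_0 - \sum_{i=1,\,i\neq j}^{r} U_i\Bigr) \;\longrightarrow\; \cF \;\longrightarrow\; \prod_{b=v+1}^{r}\ell_b\big|_{U_b} \;\longrightarrow\; 0.
\]
The cokernel is a torsion (skyscraper) sheaf, so it contributes nothing to $H^1$; the kernel is still a direct sum of line bundles---just with the extra $U_b$, $b > v$, subtracted---so $H^1(\cF)=0$ follows from the same degree inequality (now applied with the sum over all $i\neq j$). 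Concretely: rather than trying to make the conditions at the extra points diagonal, absorb them into stronger vanishing conditions in the kernel and push the deviation from exactness into a length-zero-in-$H^1$ cokernel. Your argument for $r\leq v$ is exactly the degenerate case of this exact sequence in which the cokernel vanishes; completing your proof amounts to adopting this exact-sequence step in place of the local patching.
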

\begin{proof} For notational simplicity we assume that $r \geq v$, the case where $r < v$ is similar.

	Let $L$ be a degree $d$ line bundle on $C$, and let $D_{\ell_1}, \dots D_{\ell_r}, D_0$ be divisors specifying an element $x \in {Q_r}^{\rJ C}$.  Because $x$ is saturated, $U_i := D_{\ell_i} - D_0 \in \Sym^{m_{\ell_i}(x)}(C)$ are a collection of pairwise disjoint divisors.  
 
    We must show that $\Gamma_{Q_r}(C, L^{v})_x$ has the expected dimension.  Without loss of generality, we take $p_1, \dots, p_{v}$ to be the points corresponding to the lines generated by the standard coordinate vectors $e_i \in \bC^{v},  i = 1, \dots, v$.  Then $\Gamma_{Q_r}(C, L^{v})_x$  consists of tuples of sections $s = (s_i)_{i = 1}^{v} \in \Gamma(L^{v}(-D_0))$  such that $$s_i(U_j) = 0,\quad  i,j \in \{1, \dots, v\}, ~i \neq j$$  and $s (U_k)  \in \ell_k$ for $k = v+1, \dots, r$.  Thus $\Gamma_{Q_r}(C, L^{v})_x = \Gamma(\cF)$ where $\cF$ is the fiber product of the following diagram of sheaves
\begin{equation*}\begin{tikzcd}                                                   &  \overset{r}{\underset{b = v+1}\prod} \ell_b  \arrow[d] \\
\overset{v}{\underset{j = 1}\prod}~ L\left(-D_0-\overset{v+1}{\underset{i = 1, i \neq j}\sum} U_i\right) \arrow[r, "p"] & \overset{r}{\underset{b = v+1}\prod} L^{v}(-D_0)|_{U_b}.               
\end{tikzcd}
\end{equation*}
Now there is a short exact sequence of sheaves $$0 \to \prod_{j = 1}^{v} L(-D_0-\overset{r}{\underset{i = 1, i \neq j}\sum} U_i)\to  \cF \to \prod_{b = v+1}^r {\ell_b}\to 0,$$ because the left hand term is the kernel of $p$,  and $p$ is surjective as a map of sheaves. If $- m_{\ell_j} (x) + m_{\ell(x)}  \leq d - m_0(x) + 2 -2g$ for all $j = 1, \dots, r$, then by Riemann--Roch  we have that $H^1(L(-D_0-\overset{r}{\underset{i = 1, i \neq j}\sum} U_i)) = 0$ for all $j = 1, \dots, v$, so by the long exact sequence in cohomology $H^1(\cF)  = 0$  hence $H^0(\cF)$ has the expected dimension. 

 In general, we have that  $\dim H^1(\cF) \leq \sum_{j = 1}^{\min(v,r)} \left(d - 2g + 2 - \sum_{i = 1, i \neq j}^r n_i \right)$.
\end{proof}

Because of \cref{unobstructedness}, we may construct the following poset. 

\begin{prop}\label{genposposetconstruction} Let $d \in \bbN ,n \in \bbN^r$, and let $$I = d - \sum_{i =1}^r n_i -2g + \min \{n_1, \dots, n_v\} $$
where we take $n_j = 0$ if $r < j \le v$.
 Let $W \subseteq \prod_{i = 1}^{r} \Sym^{n_i} C$ be the locus of pairwise disjoint divisors.  Consider the subset $P \subseteq (W < \Hilb(C)^{Q_r})$ consisting of $w<x$  that satisfy $$ m_{0}(w<x) + \sum_{i\neq j} m_{\ell_i}(w<x) \leq I$$ for all $j = 1, \dots, v$.    Then

		\begin{enumerate}
			\item $P$  proper over $W$ and downward closed
			\item for all $w<x \prec y$, with $w < x \in P$ and every $L \in \Pic^d(C)$ the fiber $\Gamma_{{Q_r}}(C,L \otimes V)_{y}$ is unobstructed. 
			\item $P$ contains every type $T$ with $\kappa(T) \leq I$.
		\end{enumerate}
		
\end{prop}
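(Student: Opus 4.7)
We follow the three-step strategy of \cref{posetconstruction}, but with the single function $m_0 + m_\ell$ replaced by the $v$ functions $F_j(w<x) := m_0(w<x) + \sum_{i \ne j} m_{\ell_i}(w<x)$, so that $P = \bigcap_{j=1}^{v}\{F_j \le I\}$. Downward closure of $P$ in $W < \Hilb(C)^{Q_r}$ is immediate since each $F_j$ is monotone increasing in $x$. Closedness of each $\{F_j \le I\}$ follows as in the proof of \cref{closednessprop} once we verify the subadditivity
\[
F_j(\sat(g_1 + g_2)) \;\le\; F_j(g_1) + F_j(g_2)
\]
for saturated chains $g_1, g_2$ concentrated at a single point of $C$; this is a routine case analysis split according to whether the two chains use the same line $\ell_i$ and, in the contrary case, on the relative sizes of the chain parameters $a_1, b_1, a_2+b_2$ as in \cref{closednessprop}. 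For properness over $W$, summing the $v$ inequalities $F_j \le I$ (and using $F_j(w) = \sum_{i \ne j} n_i$ for $w \in W$) gives
\[
m_0(x) + \tfrac{v-1}{v}\, m_\ell(x) \;\le\; I + \tfrac{v-1}{v} \sum_{i=1}^{r} n_i,
\]
and since $v \ge 3$ forces $J := (v-1)/v \in (1/2,1]$, \cref{closednessprop} then provides the compact bound needed on the fibers of $P \to W$. This establishes (1).

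For (2), by \cref{semitopcompare} it suffices to verify the unobstructedness of $\Gamma_{Q_r}(C, L\otimes V)_y$ when $y$ is saturated and $w<y$ is essential, which by \cref{unobstructedness} reduces to $F_j(y) \le d + 2 - 2g$ for each $j = 1, \dots, v$. The key combinatorial point is that an essential cover $x \prec y$ increases every $F_j$ by at most $+1$: inspecting the five essential cover types of \cref{essential_example}, types $(c)$ and $(e)$ always contribute $+1$, while $(a)$, $(b)$, and $(d)$ contribute $+1$ or $0$ according to whether $i = j$. Combining this with $F_j(w<x) \le I$ yields
\[
F_j(y) \;\le\; F_j(x) + 1 \;\le\; I + 1 + \sum_{i \ne j} n_i \;=\; d + 1 - 2g + \min_{k} n_k - n_j \;\le\; d + 1 - 2g,
\]
where the final step uses $n_j \ge \min_k n_k$ for every $j$. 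Since this is strictly less than $d + 2 - 2g$, the hypothesis of \cref{unobstructedness} is satisfied.

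For (3), a direct computation using $\rank(\ell_i) = 1$, $\rank(0) = 2$, $\gamma(\ell_i) = 2(v-1)$ and $\gamma(0) = 2v$ yields, for any saturated combinatorial type $T$,
\[
\kappa(T) - F_j(T) \;=\; (2v-3)\, m_0(T) + (2v-4)\, m_\ell(T) + m_{\ell_j}(T) - \supp(T).
\]
For $v \ge 3$, every point of $\supp(T)$ contributes at least $\min(2v-3,2v-4) = 2v-4 \ge 2$ to the right-hand side (any nontrivial saturated chain at that point has $m_0 \ge 1$ or $m_\ell \ge 1$), so $\kappa(T) \ge F_j(T)$ for all $j$, and therefore every combinatorial type with $\kappa(T) \le I$ lies in $P$. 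The main obstacle in the proof is the sharp $+1$ bound on the $F_j$-increment in step (2); it is precisely this bound, attained by essential covers of types $(c)$ and $(e)$, that forces the additional constant $\min_k n_k$ in the definition of $I$.
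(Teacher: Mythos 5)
Your overall architecture — reduce (1) to \cref{closednessprop}, reduce (2) to \cref{unobstructedness} via a ``slack decreases by at most 1 per step'' argument, and reduce (3) to a pointwise comparison of $\kappa$ with $F_j$ — matches the paper's. But there are two places where the reasoning as written has gaps, one minor and one real.

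For (1), your route is a legitimate variant: the paper gets closedness of $\{F_j\le I\}$ by projecting $\Hilb(C)^{Q_r}\to\Hilb(C)^{Q_{r-1}}$ (forgetting $\ell_j$) and applying \cref{closednessprop}, while you re-derive the subadditivity of $F_j$ directly; and you obtain properness by summing the $v$ inequalities to land in the regime $m_0 + J\,m_\ell \le I'$ with $J=(v-1)/v \in (1/2,1]$. Both work.

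For (2), the quantity controlled by \cref{unobstructedness} is $F_j(y)$, and what one needs is $F_j(y)\le F_j(x)+1$ for every cover $x\prec y$ in $Q_r^{\rJ C}$ — precisely the paper's argument (the paper's display has a sign typo; the intended quantity is $d - m_0 - m_\ell + m_{\ell_j} = d - F_j$). You instead enumerate the five \emph{essential pair} types of \cref{essential_example}. Those describe the relation between $w$ and $y$ at a point, not the increment from $x$ to $y$; in particular (c) and (e) are not covers. The numbers happen to come out right, but this is not the correct case analysis: the analysis should be over covers of saturated chains (extend the top $\ell_{j'}$ by one; or shift one $\ell_{j'}$ to $0$; or add a minimal nontrivial chain at a new point), each of which raises $F_j$ by at most one.

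For (3), your identity
$\kappa(T)-F_j(T)=(2v-3)m_0(T)+(2v-4)m_\ell(T)+m_{\ell_j}(T)-\supp(T)$
is correct, but the per-point lower bound ``every point of $\supp(T)$ contributes at least $2v-4$, since any nontrivial saturated chain has $m_0\ge 1$ or $m_\ell\ge 1$'' is false for \emph{relative} types. At a point $c\in\supp(w)$ the quantities $m_\ell(T)|_c$ and $m_{\ell_j}(T)|_c$ are differences of absolute multiplicities and can be zero or negative. For instance, with $f_c^w = n_i\ell_i$ and $f_c^x = n_i\cdot 0$ one finds $m_0(T)|_c = n_i$, $m_\ell(T)|_c = m_{\ell_i}(T)|_c = -n_i$, and the pointwise contribution to $\kappa - F_i$ is exactly $0$, not $\ge 2v-4$. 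Your conclusion $\kappa(T)\ge F_j(T)$ is in fact true, but the proof you give does not establish it. The paper sidesteps this by inserting the rank: $F_j(w<x) \le r(w<x) \le \kappa(w<x)$, where the first inequality is the manifestly pointwise fact $r - F_j = m_0 + m_{\ell_j} = \deg(D_{\ell_j}^x)-\deg(D_{\ell_j}^w)\ge 0$ (using $D_{\ell_j}^x\supseteq D_{\ell_j}^w$), and the second is \cref{ranklemma}. If you want to argue pointwise with your formula, you must treat separately the points of $\supp(w)$, where the contribution can collapse to $0$.
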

\begin{proof} 
	For $(1)$, it suffices to show that the subset of $\Hilb(C)^{Q_r}$ consisting of $x$ satisfying $m_0(x) + m_{\ell}(x) -m_{\ell_j}(x) \leq I$ is closed for every $j = 1, \dots v$.  Considering the projection forgetting points labelled by $\ell_j$, this follows from \cref{closednessprop}.
	
	Condition (2), follows from \cref{unobstructedness},  together with the fact that if $y \succ x$ then $$d -m_0(y) + m_{\ell}(y) - m_{\ell_j}(y) \geq d -m_0(x) + m_{\ell}(x) - m_{\ell_j}(x) -1 \geq d - I - 1 \geq 2g-2,$$
 since $d- I$ is nonnegative.

		For (3) suppose that $w < x \in \cS_T$ where $\kappa(T) \leq I$.  
		Then $\rank(w<x) \leq I$ by \cref{ranklemma}.  Now $\rank(w<x) = 2 m_0(w<x) + m_\ell(w<x)$.   Since $m_{\ell_i}(x) - n_i \geq 0$ for all $i$,  it follows that $m_0(x) + \sum_{i \neq j,i = 1}^{r} (m_{\ell_i}(x) - n_i) \leq I$ for every $j = 1, \dots, v$.
\end{proof}

\subsection{Proof of main theorem and corollary}
With \cref{posetconstruction} and \cref{posetconstruction} in place, we are now in a position to establish \cref{thm:blowupThm} and \cref{non-positivecorollary}.

\begin{proof}[Proof of \cref{thm:blowupThm}]
    We first consider the case where the points $p_1, \dots p_{\min(v,r)}$ are assumed to be in general position.    Given $(d,n) \in \bbZ^{r+1}$, by \cref{genposposetconstruction}  there is a subposet $P \subseteq (W < \Hilb(C)^{Q_r})$ that satisfies the hypotheses of \cref{semitopcompare} for $I(d,n) = \min_{j = 1, \dots,\min(v,r)}(d + n_j -  \sum_{i =1}^r n_i) -2g$.  Therefore the map $\Alg_{d,n}(C,X) \to \cM_{d,n}$ is homology $I(d,n)$-connected.  So by \cref{thm:posmaps}, the map $\Alg_{d,n}(C,X) \to \Top^+_{d,n}(C,X)$ is homology $I(d,n)$-connected.  

    In the pointed case, we apply the \cref{genposposetconstruction} to the tuple $(d,n_1, \dots, n_r, 1)$.  Let $Q_{r+1}$ be the poset $\{0, \ell_1, \dots, \ell_r, \ell_{r+1}\}$ where $\ell_{r+1} \subseteq V$ is the line corresponding to the basepoint of $X$.  Then \cref{genposposetconstruction} yields a subposet of $P \subseteq (W_{n+1} < \Hilb(C)^{Q_{r+1}})$, which is unobstructed and contains all combinatorial types $T$ such that $\kappa(T) \leq I(d,n) - 1$.  Now consider the intersection of $P$ with $R \subseteq (W_{n,*} < \Hilb(C)^{Q_{r+1}})$  (see \cref{subsec:pointedvariant} for Definition).  Given a saturated combinatorial type $T$ of $P$ we have that $\cS_T \cap R$  is a (possibly empty) union of strata of pointed combinatorial types $T'$.  Furthermore,  $\kappa(T') \geq \kappa(T)$ because the expected codimension $\gamma$ is unchanged while $\dim \cS_{T'} \leq \dim \cS_{T}$ and $r(T') \leq r(T)$. Therefore the hypotheses of \cref{pointedsemitopcompare} apply to $P \cap R$ with $I = I(d,n) - 1$.  So as above,  $\Alg_{d,n,*}(C,X) \to \Top^+_{d,n,*}(C,X)$ is $I(d,n) - 1$ connected.

    To rephrase these connectivity statements in the form stated in the introduction, suppose that $\alpha = (d',n') \in \bbN^{r+1}$ is a class satisfying $d' + n'_j -  \sum_{i =1}^r n'_i > 0$ for all $j = 1, \dots, \min(v,r)$.   Let $M_\alpha = \min(d' + n'_j -  \sum_{i =1}^r n'_i)$.  Then for $k \in \bbN$ we have that $I(kd',kn') = M_{\alpha} k - 2g$ hence for $i \leq M_{\alpha}k - 2g - 2$ the induced maps on $H_i$ are isomorphism.

    When the points $p_1, \dots, p_{\min(v,r)}$ are not assumed to be in general position, we apply the same argument, using \cref{posetconstruction} instead of \cref{genposposetconstruction}.
\end{proof}

\begin{proof}[Proof of \cref{non-positivecorollary}]
	Let $C$ be a curve.  Up to homotopy, the space $\Top_{(e,l_1, \dots, l_r)}^+(C,X)$ is independent of $e$. More precisely fix a disc around $*_C$ to obtain a  map $\pi: C \to S^2 \vee C$ (given by collapsing the boundary of the disk), and choose a map $\eta:S^2 \to X-(\cup_{i=1}^r E_i)$ that projects to a degree $1$ map to $\bbP^m$ and satisfies $\eta(0) = \eta(\infty) = *_X$.   Wedging with $\eta$ defines a stabilization map $$\Top_{(e,l_1, \dots, l_r)}^+(C,X) \to \Top_{(e+1,l_1, \dots, l_r)}^+(C,X) \qquad f \mapsto (\eta \vee f)\circ \pi.$$  
We may think of $\eta$ as a homotopy from constant map $S^1 \to *_X$ to itself.  Let $\overline \eta$ be the reversed homotopy.  Then wedging with $\overline \eta$ defines a destabilization map that is homotopy inverse to $\eta$,  because the the map $S^2 \to S^2 \vee S^2 \to^{\eta \vee \overline \eta} X-\cup_{i} E_i$ is null-homotopic.

Furthermore, up to homeomorphism $\Top_{(e,l_1, \dots, l_r)}^+(C,X)$  is independent of the choice of points $p_1, \dots, p_r$.  

When  $C = \bbP^1$ and all of the points $p_1, \dots, p_r$ lie on a hyperplane, and $$\min_{j}(e  -l_1 - \dots - l_r, l_j) > i$$ the work of Boyer--Hurtubise--Milgram \cite{boyer1999stability}  shows that $$\Alg_{(e, l_1, \dots, l_r)}(\bbP^1,X) \to \Top_{(e,l_1, \dots, l_r)}(\bbP^1, X)$$ induces an isomorphism on homology in degrees $\leq i$.  (Take $N$ to be the group of translations of that affine space which is the complement of the hyperplane. Then $c_0(X) = 1$ because $X_\infty$ has normal crossings).   Considering the diagram induced on homology by 

\begin{center}
\begin{tikzcd}
{\Alg_{(e+j , l_1, \dots, l_r),*}(\bbP^1,X)} \arrow[r] & {\Top^+_{(e+j , l_1, \dots, l_r),*}(\bbP^1,X)} \arrow[r]                               & {\Top_{(e+j , l_1, \dots, l_r),*}(\bbP^1,X)}                               \\
  
 & {\Top^+_{(e , l_1, \dots, l_r),*}(\bbP^1,X)} \arrow[u, "-\vee \eta^{\vee j}"] \arrow[r] & {\Top_{(e , l_1, \dots, l_r),*}(\bbP^1,X),} \arrow[u, "-\vee \eta^{\vee j}"]
\end{tikzcd}
\end{center}
we obtain that $\Top_{(e , l_1, \dots, l_r),*}^+(\bbP^1,X)\to \Top_{(e , l_1, \dots, l_r),*}(\bbP^1,X)$ induces an isomorphism on $H_i$ for $i \leq \min(l_j)$.  By \cref{thm:blowupThm} we obtain the result. 	\end{proof}

\subsection{The degree 5 del Pezzo Surface}\label{deg5}

Let $X$ be the unique degree $5$ del--Pezzo surface over $\bbC$.   In this section we show that for almost every ample class $\alpha \in \rN_1(X)$, that as $k \to \infty$ there is a bar construction model for the homology of $\Alg_{k \alpha}(C,X).$ First we show that there is a way of writing $X$ as a blowup, such that the ample class $\alpha$ satisfies certain equalities.  

\begin{prop}
	Let $\alpha \in H_2(X, \bbZ)$ be an ample class.   Then there are four distinct points $p_1, p_2, p_3, p_4 \in \bbP^2$, no three lying on an line and an isomorphism $X \iso {\rm Bl}_{p_1, \dots, p_4} \bbP^2$  such that $\alpha = (d,n_1,n_2,n_3, n_4)$  with $n_{i} + n_{j} + n_4 \leq d$  for all choices of distinct $i,j  \in \{1, 2,3\}$ and $n_i \geq 0$ for all $i = 1, \dots, 4$.  If $\alpha$ is such that there are no disjoint $-1$-curves $E, E'$ with $E' \cdot \alpha = E \cdot \alpha$, we may choose this isomorphism such that $n_{i} + n_{j} + n_4 < d$
	\end{prop}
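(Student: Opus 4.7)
The plan is to exploit the Weyl group $W(X) \cong S_5$ of the degree $5$ del Pezzo surface, which acts on $\Pic(X)$ by orthogonal automorphisms preserving $-K_X$ and permutes the ten $(-1)$-curves of $X$ transitively. The presentations of $X$ as $\Bl_{p_1,\dots,p_4}\bbP^2$ correspond bijectively to choices of four pairwise disjoint $(-1)$-curves to contract; under the $S_5$-action these ``stars'' form a single orbit of size $5 = |S_5/S_4|$, and an incidence count $5 \cdot 4 = 20 = 10 \cdot 2$ shows each $(-1)$-curve belongs to exactly two of them. In particular every $(-1)$-curve appears as an exceptional divisor in some presentation.

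First I would set $m := \min\{\alpha \cdot C : C \text{ a } (-1)\text{-curve of } X\}$ and fix $C_*$ attaining this minimum. I would then select a presentation $X \cong \Bl_{p_1,\dots,p_4}\bbP^2$ in which $C_*$ appears as an exceptional divisor, relabeled so that $E_4 = C_*$. The condition that no three of $p_1, \dots, p_4$ are collinear is automatic on a del Pezzo surface: if $p_i, p_j, p_k$ were collinear, then $H - E_i - E_j - E_k$ would be effective with $(H - E_i - E_j - E_k) \cdot (-K_X) = 0$, contradicting ampleness of $-K_X = 3H - \sum E_i$.

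Writing $\alpha = (d, n_1, n_2, n_3, n_4)$ in this presentation, one has $n_i = \alpha \cdot E_i$, and the strict transform $L_{ij} := H - E_i - E_j$ of the line through $p_i, p_j$ satisfies $\alpha \cdot L_{ij} = d - n_i - n_j$. Since each $L_{ij}$ is itself a $(-1)$-curve, $\alpha \cdot L_{ij} \geq m = n_4$, which rearranges to $n_i + n_j + n_4 \leq d$ for all distinct $i, j \in \{1,\dots,4\}$, in particular for $i, j \in \{1, 2, 3\}$ as required. For the strict version, if $n_i + n_j + n_4 = d$ for some distinct $i, j \in \{1, 2, 3\}$, then $\alpha \cdot L_{ij} = \alpha \cdot E_4$, and the direct computation $L_{ij} \cdot E_4 = -E_i \cdot E_4 - E_j \cdot E_4 = 0$ (using $4 \notin \{i, j\}$) shows $L_{ij}$ and $E_4$ are disjoint $(-1)$-curves with equal $\alpha$-pairing, contradicting the hypothesis.

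The only nontrivial point in this plan is the classical identification of the ten $(-1)$-curves with the ten $2$-element subsets of $\{1,\dots,5\}$ in a manner compatible with the $S_5$-symmetry; from this the description of the five stars (as ``pencils'' of subsets through a fixed element) and their incidence with individual $(-1)$-curves is immediate. Everything else in the argument is a direct consequence of the fact that any $(-1)$-curve can be moved to the position of a chosen exceptional divisor by the $S_5$-action.
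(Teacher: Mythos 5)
Your proof is correct and takes a genuinely different route from the paper's. The paper works concretely: starting from an arbitrary blowdown $X \cong \Bl_{p_1,\dots,p_4}\bbP^2$, it relabels so that $u_1 \geq u_2 \geq u_3 \geq u_4$ and then applies the explicit Cremona involution centered at $p_1, p_2, p_3$, which sends $(e,u_1,u_2,u_3,u_4)$ to $(2e - u_1 - u_2 - u_3,\, e - u_2 - u_3,\, e - u_1 - u_3,\, e - u_1 - u_2,\, u_4)$; from that formula the three inequalities $n_i + n_j + n_4 \leq d$ are equivalent to $u_1, u_2, u_3 \geq u_4$, which hold by the sort, with equality exactly when $u_3 = u_4$. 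Your argument instead minimizes $\alpha \cdot C$ over all ten $(-1)$-curves and invokes the classical facts that every $(-1)$-curve on a del Pezzo surface is the exceptional divisor of some blowdown and that the remaining strict-transform lines $L_{ij}$ are themselves $(-1)$-curves; the inequality $\alpha \cdot L_{ij} \geq \alpha \cdot E_4$ then falls out of minimality, and the equality case visibly produces the disjoint pair $(L_{ij}, E_4)$. The two arguments exploit the same $W(A_4) \cong S_5$ symmetry, but your version is coordinate-free and makes the equality case transparently geometric (disjoint $(-1)$-curves with equal $\alpha$-pairing), whereas the paper's is a self-contained direct computation that does not require the classification of $(-1)$-curves or the Petersen-graph combinatorics. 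One small point worth keeping in mind: your approach proves the inequality for all distinct $i,j \in \{1,\dots,4\}$, which is slightly stronger than what the proposition asserts, at the cost of importing the classical facts you flag; both are perfectly reasonable trade-offs.
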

\begin{proof}
		Fix four points in $p_1, \dots p_4 \in  \bbP^2$, no three lying on a line.  It is classical that $X$ is isomorphic to $\Bl_{p_1, \dots, p_4} \bbP^2$.  In terms of this isomorphism, write $\alpha = (e,u_1,u_2, u_3, u_4)$.  By relabelling the points, we may assume that $u_1 \geq u_2 \geq u_3 \geq u_4$.   Now consider the Cremona transformation on $p_1, p_2,p_3$  defining an automorphism of $\Bl_{p_1, \dots, p_4} \bbP^2$.   On $H_2(X)$ it acts by 
        \begin{multline*}
            (e,u_1,u_2,u_3,u_4) \mapsto (d,n_1,n_2,n_3,n_4) \\
            := (2e - u_1 -u_2 - u_3, e - u_2 - u_3, e - u_1 - u_3,  e- u_2 - u_3, u_4).
        \end{multline*}  Now direct calculation shows that $u_i \geq u_4$  implies that $d  \geq n_j + n_k  + n_4$  for $\{i,j,k\} = \{1,2,3\}$,  with equality if and only if $\alpha$ lies in the hyperplane $u_3 = u_4$.  Because $\alpha$ is ample $n_i \geq 0$ for all $i$. 
\end{proof}

Applying \cref{thm:blowupThm}, we immediately obtain the following theorem, as well as \cref{thm:delPezzo}.
\begin{thm}
    Let $C$ be a compact Riemann surface.  For any ample class $\alpha$ such that there are no two disjoint $-1$ curves on $X$ with $E' \cdot \alpha = E  \cdot \alpha$ and any basepoint $*_X$ not contained in the locus of exceptional curve.  Then the maps $$H_i(\Alg_{k\alpha}(C,X)) \to H_i(\Top^+_{k\alpha}(C,X)) \qquad H_i(\Alg_{k\alpha,*}(C,X)) \to H_i(\Top^+_{k\alpha,*}(C,X))$$ induce isomorphisms for $i \leq k N_\alpha - 2g -2$ where $N_\alpha$ is a constant depending on $\alpha$
\end{thm}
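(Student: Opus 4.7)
The plan is to derive the theorem as an immediate consequence of the preceding proposition together with the general-position variant of \cref{thm:blowupThm}; no new technical ingredients are needed.

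First I would apply the preceding proposition to the given ample class $\alpha$. By hypothesis no two disjoint $-1$-curves $E,E'$ on $X$ satisfy $E \cdot \alpha = E' \cdot \alpha$, so the proposition furnishes an isomorphism $X \cong \Bl_{p_1,p_2,p_3,p_4}\bbP^2$, with no three of the points $p_i$ collinear, such that in this presentation $\alpha = (d,n_1,n_2,n_3,n_4)$ satisfies the strict inequalities
\[
n_i + n_j + n_4 < d \qquad \text{for all distinct } \{i,j\} \subseteq \{1,2,3\}.
\]
The basepoint hypothesis (that $\ast_X$ avoids every exceptional curve of $X$) ensures that $\ast_X$ is not on any of the divisors $E_1,\dots,E_4$ in this model, so it is compatible with the pointed version of \cref{thm:blowupThm}.

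Second, observe that with $\dim V = 3$ and $r = 4$, the displayed strict inequalities are literally the hypotheses $d > -n_j + \sum_{i=1}^{4} n_i$ for $j=1,2,3$ in the general-position part of \cref{thm:blowupThm}, and the points $p_1,\dots,p_4$ are in linearly general position. Applying that theorem to $k\alpha$ for each $k \geq 1$ yields that both
\[
H_i(\Alg_{k\alpha}(C,X)) \to H_i(\Top^+_{k\alpha}(C,X)) \quad\text{and}\quad H_i(\Alg_{k\alpha,*}(C,X)) \to H_i(\Top^+_{k\alpha,*}(C,X))
\]
are isomorphisms for $i < kM_\alpha - 2g - 2$, where
\[
M_\alpha := d - \sum_{i=1}^{4} n_i + \max_{j \in \{1,2,3\}} n_j.
\]
The three strict inequalities imply $M_\alpha \geq 1$, so setting $N_\alpha := M_\alpha$ gives the required positive constant. (To obtain the sharpest range, one may take $N_\alpha := \max_{\sigma \in S_5} M_{\sigma \alpha}$, optimizing over the Cremona $S_5$-action on $H_2(X,\bbZ)$ and applying the above to each presentation in turn.)

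Since all the real work is already packaged into the preceding proposition (which exploits the Cremona action to normalize $\alpha$) and into the general-position unobstructedness input to \cref{thm:blowupThm} (c.f. \cref{genposposetconstruction} and \cref{unobstructedness}), there is no substantive obstacle. The only point requiring care is checking the equivalence of the proposition's strict inequalities with the hypothesis of the general-position case of \cref{thm:blowupThm}, which is a direct comparison; and confirming that the basepoint condition transports correctly, which holds because an exceptional curve in any blow-up presentation is a $-1$-curve of $X$.
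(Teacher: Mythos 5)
Your proposal is correct and follows exactly the route the paper intends: normalize $\alpha$ via the preceding proposition so that the strict inequalities $n_i + n_j + n_4 < d$ hold, observe these are precisely the hypotheses of the general-position case of \cref{thm:blowupThm} (with $r=4$, $\dim V = 3$), and read off the connectivity range. The paper's own proof is just the one-line "Applying \cref{thm:blowupThm}, we immediately obtain the following theorem," so your elaboration of the translation between the two statements, and the sharpening via the Cremona $S_5$-action, is the intended argument.
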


\printbibliography

\end{document}